\title{Enriched bi(co)ends}
\author{Nicola Carissimi}
\newcommand{\id}{\mathrm{id}}
\newcommand{\rmod}{R\mathrm{\text{-}mod}}
\newcommand{\cat}{\mathcal}
\newcommand{\op}{^\mathsf{op}}
\newcommand{\epn}{\overset{..}{\Rightarrow}}
\newcommand{\Arrowu}{\rotatebox[origin=c]{-90}{$\Leftarrow$}}
\newcommand{\Arrowd}{\rotatebox[origin=c]{90}{$\Leftarrow$}}
\newcommand{\Arrowdl}{\rotatebox[origin=c]{45}{$\Leftarrow$}}
\newcommand{\Arrowdr}{\rotatebox[origin=c]{-45}{$\Rightarrow$}}
\newcommand{\Arrowur}{\rotatebox[origin=c]{45}{$\Rightarrow$}}
\newcommand{\Arrowul}{\rotatebox[origin=c]{-45}{$\Leftarrow$}}
\newcommand{\myrightleftarrows}[1]{\mathrel{\substack{\xrightarrow{#1} \\[-.5ex] \xleftarrow{#1}}}}
\newcommand{\xmapsfrom}[2][]{\xleftarrow[#1]{#2}\mapsfromchar}
\newcommand{\rednote}[1]{\textbf{#1}}
\newcommand{\psnat}{\mathsf{PsNat}}
\newcommand{\psfun}{\mathsf{PsFun}}
\newcommand{\verteq}{\rotatebox{90}{$\,=$}}
\newcommand{\diageq}{\rotatebox{45}{$\,=$}}
\newcommand*\cocolon{%
        \nobreak
        \mskip6mu plus1mu
        \mathpunct{}%
        \nonscript
        \mkern-\thinmuskip
        {:}%
        \mskip2mu
        \relax
}
\newcommand{\rcolon}{\rotatebox[origin=c]{180}{$\colon$}}
\newcommand{\adorn}[2]{\begin{array}[b]{@{}c@{}}#2\\#1\end{array}}
\newcommand{\pslan}[2]{\mathsf{PsLan}_{#1}{#2}}
\newcommand{\mot}{\mathsf{Mot}}
\newcommand{\spn}{\mathsf{Span}}
\newcommand{\mack}{\mathsf{Mack}}
\newcommand{\mor}{\mathsf{Hom}}
\newcommand{\G}{\mathbb{G}}
\newcommand{\J}{\mathbb{J}}
\newcommand{\gpd}{\mathsf{gpd}}
\newcommand{\ADD}{\mathsf{Add}}
\newcommand{\mysize}{\fontsize{5}{6}\selectfont}
\newcommand{\bul}{} 
\DeclareRobustCommand\bul{%
  \mathord{\mathpalette\bul@{0.5}}%
}
\newcommand{\bul@}[2]{%
  \vcenter{\hbox{\scalebox{#2}{$\m@th#1\bullet$}}}%
}
\numberwithin{equation}{section}
\newtheorem{thm}[equation]{Theorem}
\newtheorem{prop}[equation]{Proposition}
\theoremstyle{definition}
\newtheorem{defn}[equation]{Definition}
\newtheorem{ex}[equation]{Example}
\newtheorem{rmk}[equation]{Remark}
\newtheorem{notat}[equation]{Notation}
\newtheorem{construction}[equation]{Construction}
\theoremstyle{remark}
\begin{document}

\maketitle

\tikzstyle{fixed size node} = [
    circle,
    draw,
    minimum size=.5cm, 
    inner sep=0, 
    align=center, 
    font=\normalsize 
]

\tikzstyle{none} = [
    minimum size=.5cm, 
    inner sep=0, 
    align=center, 
    font=\footnotesize 
]

\tikzstyle{squared} = [
    rectangle,
    draw,
    minimum size=.5cm, 
    inner sep=0, 
    align=center, 
    font=\normalsize 
]

\begin{abstract}
In this paper we introduce the theory of ends and coends in the context of enriched bicategories. This will be an enriched version of the theory introduced in \cite{Corner}, and a bicategorical version of the classical theory of enriched (co)ends, which can be found in \cite{Kelly2005} or in the more recent \cite{Fosco}. One of the main obstacles to the construction of such a theory is the amount of structure involved at this stage of categorification. A great help will be furnished by strictification results (Section \ref{sec strict}), as well as the powerful tool of string diagrams (Section \ref{sec string}), essential for making calculations manageable by a human being.
\end{abstract}

\tableofcontents

\section{Introduction}

Bicategories enriched over a monoidal bicategory $\cat V$ were first introduced by \cite{garner2015enriched}, and the present paper can also be considered as a continuation of their work. In that paper, the theory of enriched bicategories is wisely developed avoiding recourse to any kind of symmetry on the base of the enrichment, such as a braiding, which is necessary for the elementary constructions that will be presented here. These include the opposite $\cat V$-bicategory (Section \ref{section opposite}) and the tensor product of $\cat V$-bicategories (Section \ref{section tensor product}). These constructions, together with the notion of closed monoidal bicategory and its self-enrichment (Section \ref{section closed monoidal}), allow us to introduce the theory of enriched bi(co)ends (Section \ref{section bicoends}), opening the way to the definition of a $\cat V$-bicategory of $\cat V$-pseudofunctors (\ref{section enriched pseudofunctor bicat}), among the other things.

Two main results presented in this article are then the (verification of the) definition of opposite and tensor product of $\cat V$-bicategories (Theorem \ref{thmCop} and Theorem \ref{thmBtensorC}), when $\cat V$ is braided. Also, it is proved that a right closed monoidal bicategory is self-enriched (Theorem \ref{closed monoidal is self enriched}), and under the hypothesis of being closed, the theory of enriched biends and bicoends is introduced. In order to introduce this theory we define the enriched bicategorical version of extra-natural transformations (Section \ref{section extrapsnat}, and construction of a class of examples \ref{subsect ex}). These (enriched) \emph{extra-pseudonatural transformations} form a category, and we prove that enriched bi(co)ends are representing objects for this category (Proposition \ref{(co)end representing}), providing a conceptual clarification of the axioms defining them. This clarification appears to be new even with respect to the plain, non-enriched context of \cite{Corner}.

\subsection*{Acknowledgments:}
This article contains some of the results of my PhD thesis, written at the University of Lille under the supervision and with the kind help of Ivo Dell'Ambrogio, and with the financial support of Labex CEMPI and the University of Lille. I am thankful to colleagues and friends for the fruitful exchanges.

\section{Strictification results}\label{sec strict}

In this section we recollect the available strictification results for monoidal and braided monoidal bicategories, specializing more general results on tricategories. The theory of strictification for monoidal bicategories arises from the fact that tricategories cannot be fully stratified, not even in their one-object case (monoidal bicategories), nor in their one-object and one morphism case (braided monoidal categories). The standard reference for this is \cite{coherencetricat}, establishing that
\[\text{Any tricategory is triequivalent to a Gray category.}\]
This theorem boils down to its one-object case:
\begin{thm}[Gordon--Power--Street]
Any monoidal bicategory is monoidally biequivalent to a Gray monoid.
\end{thm}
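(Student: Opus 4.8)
The plan is to deduce this from the Gordon--Power--Street coherence theorem for tricategories quoted above, via delooping. Recall the standard dictionary: a monoidal bicategory $\cat V$ is the same datum as a one-object tricategory $\Sigma\cat V$, with $(\Sigma\cat V)(\ast,\ast)=\cat V$, horizontal composition given by the tensor product, the pentagonator supplied by the associativity tritransformation, and so on; a Gray monoid is precisely a one-object Gray category; and monoidal homomorphisms, monoidal pseudonatural transformations, etc., between monoidal bicategories correspond exactly to trihomomorphisms, tritransformations, etc., between the associated one-object tricategories. Under this correspondence a monoidal biequivalence is the same thing as a triequivalence of one-object tricategories. Hence it suffices, for any monoidal bicategory $\cat V$, to produce a one-object Gray category $\Sigma\mathscr M$ together with a triequivalence $\Sigma\cat V\to\Sigma\mathscr M$.

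First I would apply the tricategorical coherence theorem to $\Sigma\cat V$: it yields a Gray category $\mathscr G$ and a triequivalence $F\colon\Sigma\cat V\to\mathscr G$. In general $\mathscr G$ has many objects, so the next step is to cut it down. Since $F$ is biessentially surjective on objects and $\Sigma\cat V$ has the single object $\ast$, every object of $\mathscr G$ is internally biequivalent to $F(\ast)$. Let $\mathscr G'$ be the full sub-Gray-category of $\mathscr G$ on the single object $F(\ast)$; it inherits a Gray-category structure, since enrichment in $\mathbf{2Cat}$ with the Gray tensor product restricts to full subcategories with no extra verification. As $F$ carries $1$- and $2$-cells of $\Sigma\cat V$ into $\mathscr G(F\ast,F\ast)=\mathscr G'(F\ast,F\ast)$, it corestricts to a trihomomorphism $F'\colon\Sigma\cat V\to\mathscr G'$, and $F'$ is again a triequivalence: it is locally a biequivalence because its action on hom-$2$-categories is literally that of $F$, and it is trivially biessentially surjective onto the unique object of $\mathscr G'$. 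Thus $\mathscr G'=\Sigma\mathscr M$ for a Gray monoid $\mathscr M$, and we have a triequivalence $\Sigma\cat V\to\Sigma\mathscr M$.

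Translating $F'$ back through the delooping dictionary of the first paragraph then yields a monoidal homomorphism $\cat V\to\mathscr M$ which, being a triequivalence of one-object tricategories, is the desired monoidal biequivalence. The genuine mathematical weight of the statement lies entirely in the cited coherence theorem for tricategories; what remains is bookkeeping, and the main point requiring care --- and where I would spend most of the effort --- is the precise verification that the delooping correspondence matches every piece of coherence data of a monoidal bicategory with that of a one-object tricategory (and of a Gray monoid with that of a one-object Gray category), compatibly with the respective $2$-dimensional notions of equivalence. The cutting-down-to-one-object step, by contrast, is routine.
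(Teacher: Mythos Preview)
Your argument is correct and matches precisely what the paper indicates: the paper does not give its own proof of this statement but simply attributes it to Gordon--Power--Street, observing that the tricategorical coherence theorem ``boils down to its one-object case.'' Your delooping argument is exactly the standard way to make that phrase precise, and your care about the cutting-down-to-one-object step is appropriate; there is nothing further to compare.
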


As one starts to consider a braided structure on such a monoidal bicategory, the theory naturally lands in a higher categorical level. Since even the definition given by Todd Trimble of a tetracategory is fairly wild, it is understandable that a fully developed theory of strictification has yet to appear. However, the one-object and one-morphism case of a tetracategory (\emph{e.g.} a braided monoidal bicategory) and its strictification have been in fact well studied, and references considered in this paper for that are the work of Crans \cite{Crans98} and Gurski \cite{gurski2011loopspacescoherencemonoidal}. The first provided a definition of what a \emph{semi-strict braided monoidal bicategory} is, and the second proved the following:

\begin{thm}[Gurski]\label{strictification gurski}
Any braided monoidal bicategory is braided monoidally biequivalent to a semi-strict braided monoidal bicategory.
\end{thm}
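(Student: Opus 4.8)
The plan is to proceed in two stages: first strictify the underlying monoidal structure and transport the braiding, then carry out the more delicate strictification of the braiding itself. For the first stage I would apply the Gordon--Power--Street strictification theorem quoted above to the underlying monoidal bicategory of $\cat V$, obtaining a monoidal biequivalence between $\cat V$ and a Gray monoid $\cat V_s$; fix a monoidal pseudofunctor $F\colon\cat V\to\cat V_s$ realising it, a pseudo-inverse $G\colon\cat V_s\to\cat V$, and the invertible monoidal pseudonatural equivalences $GF\simeq\id$, $FG\simeq\id$. The braiding of $\cat V$ --- an adjoint equivalence $\beta_{A,B}\colon A\otimes B\to B\otimes A$, pseudonatural in $A$ and $B$, together with its two hexagonator modifications and the axioms they satisfy --- is then transported along $F$: one sets $R_{A,B}$ to be the evident composite of $F\beta_{GA,GB}$ with the monoidal constraints of $F$ and $G$ and the counit equivalences $FG\simeq\id$, and assembles the hexagonators of $\cat V_s$ out of those of $\cat V$ together with these constraints. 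Checking the braiding axioms for $(\cat V_s,R)$ is then a long but essentially routine diagram chase (and also follows from the general principle that algebraic structure transports along biequivalences). The same data upgrades $F$ to a braided monoidal biequivalence, so that $\cat V$ becomes braided monoidally biequivalent to a braided Gray monoid.

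For the second stage it remains to replace a braided Gray monoid by a braided monoidally biequivalent one that is semi-strict in the sense of Crans \cite{Crans98}: the braiding $1$-cell $R_{A,B}$ of course stays genuine data and cannot be trivialised, but its pseudonaturality $2$-cells in one variable are forced to be Gray interchangers and the relevant components of the hexagonators become identities. Here I would follow Gurski's coherence machinery \cite{gurski2011loopspacescoherencemonoidal}: present braided Gray monoids as pseudo-algebras for a suitable (pseudo)monad $T$ on an appropriate category of Gray-graphs, chosen so that its \emph{strict} algebras are exactly Crans' semi-strict braided monoidal bicategories, and then invoke the three-dimensional analogue of the coherence theorem for pseudo-algebras over a $2$-monad --- which ultimately rests on the tricategorical coherence theorem \cite{coherencetricat} --- to the effect that every pseudo-$T$-algebra is equivalent, via a morphism of pseudo-$T$-algebras which happens to be a biequivalence, to a strict one. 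Unwinding the $2$-monadic data, such a comparison morphism is precisely a braided monoidal biequivalence; composing it with the braided monoidal biequivalence of the first stage finishes the proof.

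The main obstacle is the second stage, and it is twofold: isolating the correct target notion --- that is, determining exactly which coherence cells of the braiding may be trivialised without breaking the hexagon axioms, equivalently choosing the monad $T$ so that its strict algebras recover Crans' definition --- and verifying that the coherence-theorem comparison is genuinely \emph{braided monoidal}, rather than merely a biequivalence of underlying bicategories. This is exactly the content of \cite{gurski2011loopspacescoherencemonoidal}; the first stage, by contrast, is careful bookkeeping once transport of structure along biequivalences is taken for granted.
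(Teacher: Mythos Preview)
The paper does not prove this theorem: it is stated as a result due to Gurski and cited to \cite{gurski2011loopspacescoherencemonoidal}, so there is no in-paper proof to compare your proposal against. Your sketch is a reasonable outline of the architecture one expects such a proof to have, and you correctly identify the key point the paper itself stresses in the Remark following the statement: that the result is \emph{not} merely transport of the braiding along the Gordon--Power--Street strictification (your first stage), but requires the additional semi-strictification of the braiding data (your second stage), which is the genuine content of Gurski's work.

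That said, two cautions. First, your second stage is essentially a black box pointing back to \cite{gurski2011loopspacescoherencemonoidal}; you have not really proposed an argument so much as named where one lives. If the intent was to give an independent proof, this is circular. Second, the monadic packaging you suggest (braided Gray monoids as pseudo-algebras for a $2$-monad $T$ whose strict algebras are Crans' semi-strict braided monoidal bicategories, then invoking pseudo-algebra coherence) is plausible in spirit but is not quite how Gurski proceeds: his argument works with free braided monoidal bicategories and an explicit comparison of the strictification construction with the free semi-strict one, rather than via a general $2$-monadic coherence theorem. So while your high-level decomposition into ``strictify the monoidal part, then the braided part'' matches, the mechanism you propose for the second step is a paraphrase rather than a faithful summary of the cited proof.
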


What a Gray monoid (also known as a semi-strict monoidal bicategory) and a semi-strict braided monoidal bicategory are, will be briefly recalled in the next two subsections. An important aspect which is highlighted in \cite{gurski2011loopspacescoherencemonoidal} is that a semi-strict braided monoidal bicategory isn't just a semi-strict monoidal bicategory with a braided structure, but it satisfies more axioms, enhancing the power of the strictification result.

\subsection{Gray monoids}\label{subsect gray}

The Gray tensor product $\otimes_G$ has been first introduced in \cite{gray1974formal}, and defines a symmetric monoidal structure on the category of 2-categories and 2-functors. A \emph{Gray monoid} is then a monoid for this monoidal operation. The resulting structure happens to be a monoidal bicategory (this is checked for example at \cite{gurskiphd}, Section 5, by means of \emph{cubical pseudofunctors}), which is not a monoidal 2-category, since we replaced the cartesian structure with the Gray tensor product. Nonetheless, it's tensor product is strictly associative and unital. The semi-strictness is due to the fact that for every two pairs of objects $(A,B)$, $(A',B')$ and 1-cells $f\colon A\to A'$ and $g\colon B\to B'$, the commutativity of the interchange law for a monoidal 2-category is replaced by 2-isomorphisms called \emph{interchangers}:

\noindent
\begin{minipage}{.05\textwidth}
\begin{equation}\label{interchanger}
\phantom{c}
\end{equation}
\end{minipage}
\begin{minipage}{.9\textwidth}
\begin{center}
\begin{tikzcd}
	{A\otimes B} & {A\otimes B'} \\
	{A'\otimes B} & {A'\otimes B',}
	\arrow["{A\otimes g}", from=1-1, to=1-2]
	\arrow["{f \otimes B}"', from=1-1, to=2-1]
	\arrow[phantom, "{\Arrowdl\Sigma_{f,g}}"{description}, draw=none, from=1-2, to=2-1]
	\arrow["{f \otimes B'}", from=1-2, to=2-2]
	\arrow["{A' \otimes g}"', from=2-1, to=2-2]
\end{tikzcd}
\end{center}
\end{minipage}

\vspace{1em}
\noindent
Axioms for the interchanger will be given in Section \ref{sec string}. References for these axioms are, for example, the more general case of those for a Gray category in \cite{coherencetricat}.
 
\subsection{Braided Gray monoids and semi-strict braided monoidal bicategories}

Braided monoidal bicategory where first introduced at \cite{Mccrudden2000}. The structure (see Definition \ref{def braided gray mon}) is given by the braiding morphism, together two 2-isomorphisms $R$ and $S$ indexed by triples of objects and replacing the commutative hexagons of a braided monoidal category, satisfying four axioms. In the author's PhD thesis \cite{mythesis} a string diagrammatic non-strict formalism of the axioms for a braiding is given. Here, we are going to directly state the strict version of the same axioms (in Section \ref{sec string braid}), in which the monoidal structure of the braided monoidal bicategory is assumed to be that of a Gray monoid.

\begin{rmk}
Strictification for a braided monoidal bicategory yields a more specific structure than just a braided structure on a Gray monoid. The strictification theorem for braided monoidal bicategories would otherwise be just an easy consequence of the strictification for monoidal bicategories via a lifting of the braided structure.
\end{rmk}

The following is going to define both the structure of a braided Gray monoid and that of a semi-strict monoidal bicategory. It will be highlighted the difference between the braiding axioms, which are just a strict form of the axioms for a braided monoidal bicategory, and the additional constraints that we can achieve via a braided monoidal strictification.

\begin{defn}\label{def braided gray mon}
Let $\sigma_G$ denote the symmetry for the symmetric monoidal category of 2-categories and 2-functors under the Gray tensor product. A \emph{braided structure on a Gray monoid} $\cat B$ consist of a pseudonatural equivalence of 2-functors
\begin{center}
\begin{tikzcd}
	{\mathcal B\otimes_G\mathcal B} && {\mathcal B\otimes_G\mathcal B} \\
	& {\mathcal B}
	\arrow["{\sigma_G}", from=1-1, to=1-3]
	\arrow[""{name=0, anchor=center, inner sep=0}, "\otimes"', from=1-1, to=2-2]
	\arrow["\otimes", from=1-3, to=2-2]
	\arrow[phantom, "{\Arrowdl\beta}"{description}, draw=none, from=1-3, to=0]
\end{tikzcd}
\end{center}
and two modifications, for every triple of objects $A,B,C$,
\begin{center}
\begin{tikzcd}[column sep = 1em]
	A\otimes B\otimes C && B\otimes C\otimes A & A\otimes B\otimes C && C\otimes A\otimes B \\
	& B\otimes A\otimes C &&& A\otimes C\otimes B
	\arrow[""{name=0, anchor=center, inner sep=0}, "\beta", from=1-1, to=1-3]
	\arrow["{\beta 1}"', from=1-1, to=2-2]
	\arrow["{1\beta}", from=2-2, to=1-3]
	\arrow[""{name=1, anchor=center, inner sep=0}, "\beta", from=1-4, to=1-6]
	\arrow["{1\beta}"', from=1-4, to=2-5]
	\arrow["{\beta 1}", from=2-5, to=1-6]
	\arrow[phantom, "{\Arrowd R}"{description}, draw=none, from=0, to=2-2]
	\arrow[phantom, "{\Arrowd S}"{description}, draw=none, from=1, to=2-5]
\end{tikzcd}
\end{center}
satisfying the four braiding axioms (\ref{BA1})-(\ref{BA4}) below in Section \ref{sec string braid}.

A braided Gray monoid is, moreover, a \emph{semi-strict braided monoidal bicategory} if it satisfied the following further axioms (see \cite{Crans98} Definition 2.2). For every pair of objects $A, B$ in $\cat B$, the following pair of squares commute:
\begin{equation}\label{ax1 semi-strict braided}
\begin{tikzcd}
	{A\otimes \mathbb 1} & {\mathbb 1\otimes A} & {A\otimes \mathbb 1} \\
	A & A & A
	\arrow["\beta", from=1-1, to=1-2]
	\arrow[equals, from=1-1, to=2-1]
	\arrow["\beta", from=1-2, to=1-3]
	\arrow[equals, from=1-2, to=2-2]
	\arrow[equals, from=1-3, to=2-3]
	\arrow[equals, from=2-1, to=2-2]
	\arrow[equals, from=2-2, to=2-3]
\end{tikzcd}
\tag{s1}
\end{equation}
and the following identities hold true:
\begin{equation}\label{ax2 semi-strict braided}
\begin{tikzcd}[column sep=1em]
	AB & {AB\mathbb1} && {B\mathbb 1A} & BA \\
	& BA & {BA\mathbb 1} & BA
	\arrow[Rightarrow, no head, from=1-1, to=1-2]
	\arrow["\beta"', from=1-1, to=2-2]
	\arrow[""{name=0, anchor=center, inner sep=0}, "\beta", from=1-2, to=1-4]
	\arrow["{\beta 1}"', from=1-2, to=2-3]
	\arrow[Rightarrow, no head, from=1-4, to=1-5]
	\arrow[Rightarrow, no head, from=2-2, to=2-3]
	\arrow["{1\beta}"', from=2-3, to=1-4]
	\arrow[Rightarrow, no head, from=2-3, to=2-4]
	\arrow[Rightarrow, no head, from=2-4, to=1-5]
	\arrow[phantom, "{\Arrowd R}"{description}, draw=none, from=0, to=2-3]
	\arrow["\beta", curve={height=-18pt}, from=1-1, to=1-5]
\end{tikzcd}
=\id_{\beta}=
\begin{tikzcd}[column sep=1em]
	AB & {A\mathbb1B} && {\mathbb 1BA} & BA \\
	& AB & {\mathbb 1AB} & AB
	\arrow[Rightarrow, no head, from=1-1, to=1-2]
	\arrow[Rightarrow, no head, from=1-1, to=2-2]
	\arrow[""{name=0, anchor=center, inner sep=0}, "\beta", from=1-2, to=1-4]
	\arrow["{\beta 1}"', from=1-2, to=2-3]
	\arrow[Rightarrow, no head, from=1-4, to=1-5]
	\arrow[Rightarrow, no head, from=2-2, to=2-3]
	\arrow["{1\beta}"', from=2-3, to=1-4]
	\arrow[Rightarrow, no head, from=2-3, to=2-4]
	\arrow["\beta"', from=2-4, to=1-5]
	\arrow[phantom, "{\Arrowd R}"{description}, draw=none, from=0, to=2-3]
	\arrow["\beta", curve={height=-18pt}, from=1-1, to=1-5]
\end{tikzcd}
\tag{s2}
\end{equation}

\begin{equation}\label{ax3 semi-strict braided}
\begin{tikzcd}[column sep=1em]
	AB & {\mathbb 1AB} && {B\mathbb 1A} & BA \\
	& BA & {\mathbb 1BA} & BA
	\arrow[equals, from=1-1, to=1-2]
	\arrow["\beta"', from=1-1, to=2-2]
	\arrow[""{name=0, anchor=center, inner sep=0}, "\beta", from=1-2, to=1-4]
	\arrow["{1\beta}"', from=1-2, to=2-3]
	\arrow[equals, from=1-4, to=1-5]
	\arrow[equals, from=2-2, to=2-3]
	\arrow["\beta1"', from=2-3, to=1-4]
	\arrow[equals, from=2-3, to=2-4]
	\arrow[equals, from=2-4, to=1-5]
	\arrow[phantom, "{\Arrowd S}"{description}, draw=none, from=0, to=2-3]
	\arrow["\beta", curve={height=-18pt}, from=1-1, to=1-5]
\end{tikzcd}
=\id_{\beta}=
\begin{tikzcd}[column sep=1em]
	AB & {A\mathbb 1B} && {BA\mathbb 1} & BA \\
	& AB & {AB\mathbb 1} & AB
	\arrow[equals, from=1-1, to=1-2]
	\arrow["\beta", curve={height=-18pt}, from=1-1, to=1-5]
	\arrow[equals, from=1-1, to=2-2]
	\arrow[""{name=0, anchor=center, inner sep=0}, "\beta", from=1-2, to=1-4]
	\arrow["{1\beta}"', from=1-2, to=2-3]
	\arrow[equals, from=1-4, to=1-5]
	\arrow[equals, from=2-2, to=2-3]
	\arrow["\beta1"', from=2-3, to=1-4]
	\arrow[equals, from=2-3, to=2-4]
	\arrow["\beta"', from=2-4, to=1-5]
	\arrow["{\Arrowd S}"{description}, draw=none, from=0, to=2-3]
\end{tikzcd}
\tag{s3}
\end{equation}

\begin{equation}\label{ax4 semi-strict braided}
\begin{tikzcd}[column sep=1em]
	AB & \mathbb1AB && {AB\mathbb 1} & AB \\
	&& {A\mathbb 1B} \\
	&& AB
	\arrow[equals, from=1-1, to=1-2]
	\arrow[curve={height=-24pt}, equals, from=1-1, to=1-5]
	\arrow[equals, from=1-1, to=3-3]
	\arrow[""{name=0, anchor=center, inner sep=0}, "\beta", from=1-2, to=1-4]
	\arrow["{\beta 1}"', from=1-2, to=2-3]
	\arrow[equals, from=1-4, to=1-5]
	\arrow["{1\beta}"', from=2-3, to=1-4]
	\arrow[equals, from=3-3, to=1-5]
	\arrow[equals, from=3-3, to=2-3]
	\arrow[phantom,"{\Arrowd R}"{description}, draw=none, from=0, to=2-3]
\end{tikzcd}
=\id_\id=
\begin{tikzcd}[column sep=1em]
	AB & {AB \mathbb1} && {\mathbb 1AB} & AB \\
	&& {A\mathbb 1B} \\
	&& AB
	\arrow[equals, from=1-1, to=1-2]
	\arrow[curve={height=-24pt}, equals, from=1-1, to=1-5]
	\arrow[equals, from=1-1, to=3-3]
	\arrow[""{name=0, anchor=center, inner sep=0}, "\beta", from=1-2, to=1-4]
	\arrow["{1\beta }"', from=1-2, to=2-3]
	\arrow[equals, from=1-4, to=1-5]
	\arrow["\beta1"', from=2-3, to=1-4]
	\arrow[equals, from=3-3, to=1-5]
	\arrow[equals, from=3-3, to=2-3]
	\arrow[phantom, "{\Arrowd S}"{description}, draw=none, from=0, to=2-3]
\end{tikzcd}
\tag{s4}
\end{equation}

\end{defn}

\begin{rmk}
Observe that in axioms \eqref{ax2 semi-strict braided} and \eqref{ax3 semi-strict braided} every square commutes, either because of \eqref{ax1 semi-strict braided} (which also gives commutative squares in \eqref{ax4 semi-strict braided}), or because of strictness of unitality, like \begin{tikzcd}[column sep=1em, row sep=1em]
AB \arrow[equals, no head]{r}{} \arrow[swap]{d}{\beta} & AB\mathbb 1 \arrow{d}{\beta 1} \\%
BA \arrow[equals, no head]{r}{}& BA\mathbb 1
\end{tikzcd}; or again because of unitality for the pseudonatural transformation $\beta$, which evaluated at the identical monoidal unitor gives the commutativity of, for example, \begin{tikzcd}[column sep=1em, row sep=1em]
AB\mathbb 1 \arrow[equals, no head]{r}{} \arrow[swap]{d}{\beta} & AB \arrow{d}{\beta} \\%
B\mathbb 1 A \arrow[equals, no head]{r}{}& BA
\end{tikzcd}.

\end{rmk}

\section{String diagrams}\label{sec string}

In this section we fix notations for string diagrams and we point out the main rules that will be subsequently used. For what concerns the bicategorical structure, the orientation of 1-cell will be from the bottom to the top, while that of 2-cells is from left to right. For example, a 2-cell $\alpha\colon f\circ g\circ h\Rightarrow k\circ \ell$ is given by
\begin{center}
\begin{tikzpicture}[scale=.5]
		\node [style=fixed size node] (0) at (0, 0) {$\alpha$};
		\node [style=none] (1) at (-2, 2) {};
		\node [style=none] (2) at (-2, 0) {};
		\node [style=none] (3) at (-2, -2) {};
		\node [style=none] (4) at (2, -1) {};
		\node [style=none] (5) at (2, 1) {};
		\node [style=none] (6) at (-2.5, 2) {$f$};
		\node [style=none] (7) at (-2.5, 0) {$g$};
		\node [style=none] (8) at (-2.5, -2) {$h$};
		\node [style=none] (9) at (2.5, -1) {$\ell$.};
		\node [style=none] (10) at (2.5, 1) {$k$};
		
		\draw [in=60, out=-180] (5.center) to (0);
		\draw [in=180, out=-60] (0) to (4.center);
		\draw [in=360, out=105, looseness=0.75] (0) to (1.center);
		\draw [in=360, out=180] (0) to (2.center);
		\draw [in=0, out=-105, looseness=0.75] (0) to (3.center);
\end{tikzpicture}
\end{center}

Each region is labeled with an object which is the domain of the string above it and the codomain of the string below it. These objects will usually remain implicit.

\begin{rmk}
The usual rules making string diagrams a powerful language and which are used on a constant base in calculations are the Eckmann-Hilton argument for 2-cells $\alpha\colon f\Rightarrow f'$ and $\beta\colon g\Rightarrow g'$:
\begin{center}
\begin{tikzpicture}[scale=.5]
		\node [style=fixed size node] (0) at (-5.5, 1) {$\alpha$};
		\node [style=fixed size node] (1) at (-3.5, -1) {$\beta$};
		\node [style=none] (2) at (-7.5, 1) {};
		\node [style=none] (3) at (-7.5, -1) {};
		\node [style=none] (4) at (-1.5, 1) {};
		\node [style=none] (5) at (-1.5, -1) {};
		\node [style=none] (6) at (1.5, 1) {};
		\node [style=none] (7) at (1.5, -1) {};
		\node [style=none] (8) at (7.5, -1) {};
		\node [style=none] (9) at (7.5, 1) {};
		\node [style=fixed size node] (10) at (5.5, 1) {$\alpha$};
		\node [style=fixed size node] (11) at (3.5, -1) {$\beta$};
		\node [style=none] (16) at (-1, 1) {$f'$};
		\node [style=none] (17) at (-1, -1) {$g'$};
		\node [style=none] (20) at (-8, 1) {$f$};
		\node [style=none] (21) at (-8, -1) {$g$};
		\node [style=none] (22) at (0, 0) {$=$};
		\node [style=none] (23) at (1, 1) {$f$};
		\node [style=none] (24) at (1, -1) {$g$};
		\node [style=none] (25) at (8, 1) {$f'$};
		\node [style=none] (26) at (8, -1) {$g'$};
		
		\draw [in=180, out=0] (2.center) to (0);
		\draw [in=180, out=0] (0) to (4.center);
		\draw [in=180, out=0] (3.center) to (1);
		\draw [in=180, out=0] (1) to (5.center);
		\draw [in=180, out=0] (6.center) to (10);
		\draw [in=180, out=0] (10) to (9.center);
		\draw [in=180, out=0] (7.center) to (11);
		\draw [in=180, out=0] (11) to (8.center);
\end{tikzpicture}
\end{center}
and the triangular identities for internal adjunctions $f\dashv g$:
\begin{center}
\begin{tikzpicture}[scale=.5]
		\node [style=none] (0) at (0, 0) {$=$};
		\node [style=none] (1) at (2, 0) {};
		\node [style=none] (2) at (1.5, 0) {$g$};
		\node [style=none] (3) at (-2, 1.5) {};
		\node [style=none] (4) at (-1.5, 1.5) {$g$};
		\node [style=none] (5) at (6, 0) {};
		\node [style=none] (6) at (6.5, 0) {$g$};
		\node [style=none] (7) at (-4, 1.5) {};
		\node [style=none] (8) at (-4, 0) {};
		\node [style=none] (9) at (-4, -1.5) {};
		\node [style=none] (10) at (-6, -1.5) {};
		\node [style=none] (11) at (-6.5, -1.5) {$g$};
		\node [style=none] (12) at (-3.75, 0.5) {$f$};

		\draw [in=180, out=0] (1.center) to (5.center);
		\draw (3.center) to (7.center);
		\draw [in=180, out=-180, looseness=1.50] (7.center) to (8.center);
		\draw [in=0, out=0, looseness=1.50] (8.center) to (9.center);
		\draw (9.center) to (10.center);
\end{tikzpicture}
\hspace{2em}\raisebox{2.5em}{\text{and}}\hspace{2em}
\begin{tikzpicture}[scale=.5]
		\node [style=none] (0) at (0, 0) {$=$};
		\node [style=none] (1) at (2, 0) {};
		\node [style=none] (2) at (1.5, 0) {$f$};
		\node [style=none] (3) at (-2, -1.5) {};
		\node [style=none] (4) at (-1.5, -1.5) {$f$};
		\node [style=none] (5) at (6, 0) {};
		\node [style=none] (6) at (6.5, 0) {$f.$};
		\node [style=none] (7) at (-4, -1.5) {};
		\node [style=none] (8) at (-4, 0) {};
		\node [style=none] (9) at (-4, 1.5) {};
		\node [style=none] (10) at (-6, 1.5) {};
		\node [style=none] (11) at (-6.5, 1.5) {$f$};
		\node [style=none] (12) at (-4.25, 0.5) {$g$};
		\draw [in=-180, out=0] (1.center) to (5.center);
		\draw (3.center) to (7.center);
		\draw [in=-180, out=180, looseness=1.50] (7.center) to (8.center);
		\draw [in=0, out=0, looseness=1.50] (8.center) to (9.center);
		\draw (9.center) to (10.center);
\end{tikzpicture}
\end{center}
\end{rmk}

\begin{rmk}
In the case of the bicategory of pseudofunctors between two fixed bicategories, pseudonatural transformations and modifications, a source of rules for string calculus comes from the axioms for a pseudonatural transformation (naturality in particular) and from the modification axiom. Translated in diagrams, they say the following:
\begin{itemize}[noitemsep,wide=0pt, leftmargin=\dimexpr\labelwidth + 2\labelsep\relax]
\item Let $t\colon F\Rightarrow G$ a pseudonatural transformation of pseudofunctors. That means, for every $u\colon X\to Y$ there are invertible crossings (2-cells)
\begin{center}
\begin{tikzpicture}[scale=.5]
		\node [style=none] (0) at (-1, 1) {};
		\node [style=none] (1) at (-1, -1) {};
		\node [style=none] (2) at (1, 1) {};
		\node [style=none] (3) at (1, -1) {};
		\node [style=none] (4) at (1.5, 1) {$t_Y$};
		\node [style=none] (5) at (1.5, -1) {$Fu.$};
		\node [style=none] (6) at (-1.5, 1) {$Gu$};
		\node [style=none] (7) at (-1.5, -1) {$t_X$};
		
		\draw [in=0, out=180, looseness=0.75] (2.center) to (1.center);
		\draw [in=-180, out=0, looseness=0.75] (0.center) to (3.center);
\end{tikzpicture}
\end{center}
Then, the naturality axiom says that for every 2-cell $\alpha\colon u\to u'$ in the domain bicategory we have

\noindent
\begin{minipage}{.05\textwidth}
\begin{equation}\label{naturality axiom}
\phantom{c}
\end{equation}
\end{minipage}
\begin{minipage}{.9\textwidth}
\begin{center}
\begin{tikzpicture}[scale=.5]
		\node [style=none] (0) at (-5.5, 1) {};
		\node [style=none] (1) at (-5.5, -1) {};
		\node [style=none] (2) at (-3.5, 1) {};
		\node [style=none] (3) at (-3.5, -1) {};
		\node [style=none] (4) at (-1, 1) {$t_Y$};
		\node [style=none] (5) at (-3.5, -1.5) {$Fu$};
		\node [style=none] (6) at (-6, 1) {$Gu$};
		\node [style=none] (7) at (-6, -1) {$t_X$};
		\node [style=fixed size node] (8) at (-2.5, -1) {$F\alpha$};
		\node [style=none] (9) at (-1.5, -1) {};
		\node [style=none] (10) at (-1.5, 1) {};
		\node [style=none] (11) at (-1, -1) {$Fu'$};
		\node [style=none] (12) at (5.5, -1) {};
		\node [style=none] (13) at (5.5, 1) {};
		\node [style=none] (14) at (3.5, -1) {};
		\node [style=none] (15) at (3.5, 1) {};
		\node [style=none] (17) at (3.5, 1.5) {$Gu'$};
		\node [style=none] (18) at (6, -1) {$Fu'.$};
		\node [style=none] (19) at (6, 1) {$t_Y$};
		\node [style=fixed size node] (20) at (2.5, 1) {$G\alpha$};
		\node [style=none] (21) at (1.5, 1) {};
		\node [style=none] (22) at (1.5, -1) {};
		\node [style=none] (24) at (0, 0) {$=$};
		\node [style=none] (25) at (1, 1) {$Gu$};
		\node [style=none] (26) at (1, -1) {$t_X$};
		
		\draw [in=0, out=180, looseness=0.75] (2.center) to (1.center);
		\draw [in=-180, out=0, looseness=0.75] (0.center) to (3.center);
		\draw (2.center) to (10.center);
		\draw (3.center) to (8);
		\draw (8) to (9.center);
		\draw [in=-180, out=0, looseness=0.75] (14.center) to (13.center);
		\draw [in=0, out=-180, looseness=0.75] (12.center) to (15.center);
		\draw (14.center) to (22.center);
		\draw (15.center) to (20);
		\draw [in=360, out=180] (20) to (21.center);
\end{tikzpicture}
\end{center}
\end{minipage}
\item If $M\colon t\Rrightarrow s$ is a modification, then the data of the family of 2-cells $M_X\colon t_X\Rightarrow s_X$ is subject to the equality

\noindent
\begin{minipage}{.1\textwidth}
\begin{equation}\label{modification axiom}
\phantom{c}
\end{equation}
\end{minipage}
\begin{minipage}{.9\textwidth}
\begin{center}
\begin{tikzpicture}[scale=.5]
		\node [style=none] (0) at (1.5, 1) {};
		\node [style=none] (1) at (1.5, -1) {};
		\node [style=none] (2) at (3.5, 1) {};
		\node [style=none] (3) at (3.5, -1) {};
		\node [style=none] (4) at (6, 1) {$s_Y$};
		\node [style=none] (5) at (3.5, 1.5) {$t_Y$};
		\node [style=none] (6) at (1, 1) {$Gu$};
		\node [style=none] (7) at (1, -1) {$t_X$};
		\node [style=none] (9) at (5.5, -1) {};
		\node [style=fixed size node] (10) at (4.5, 1) {$M_Y$};
		\node [style=none] (11) at (6, -1) {$Fu$};
		\node [style=none] (12) at (-1.5, -1) {};
		\node [style=none] (13) at (-1.5, 1) {};
		\node [style=none] (14) at (-3.5, -1) {};
		\node [style=none] (15) at (-3.5, 1) {};
		\node [style=none] (18) at (-1, -1) {$Fu$};
		\node [style=none] (19) at (-1, 1) {$s_Y$};
		\node [style=none] (21) at (-5.5, 1) {};
		\node [style=none] (22) at (-5.5, -1) {};
		\node [style=none] (24) at (0, 0) {$=$};
		\node [style=none] (25) at (-6, 1) {$Gu$};
		\node [style=none] (26) at (-6, -1) {$t_X$};
		\node [style=none] (27) at (5.5, 1) {};
		\node [style=fixed size node] (28) at (-4.5, -1) {$M_X$};
		\node [style=none] (29) at (-3.5, -1.5) {$s_X$};
		
		\draw [in=0, out=180, looseness=0.75] (2.center) to (1.center);
		\draw [in=-180, out=0, looseness=0.75] (0.center) to (3.center);
		\draw (2.center) to (10);
		\draw [in=-180, out=0, looseness=0.75] (14.center) to (13.center);
		\draw [in=0, out=-180, looseness=0.75] (12.center) to (15.center);
		\draw (3.center) to (9.center);
		\draw (27.center) to (10);
		\draw (22.center) to (28);
		\draw (28) to (14.center);
		\draw (21.center) to (15.center);
\end{tikzpicture}
\end{center}
\end{minipage}
\vspace{1em}
for every $u\colon X\to Y.$
\end{itemize}
\end{rmk}

\subsection{Strings for monoidal bicategories}
When we deal with a monoidal bicategory, we will always interpret juxtaposition of (1- and 2-) morphisms in a string diagram as a tensor product. Juxtaposition with 1 is tensoring with the identity morphism. For example, if $f,g\colon A\to B$ are 1-cells and $\alpha\colon f\Rightarrow g$, the 2-cell $\id_C\otimes\alpha\colon C\otimes f\Rightarrow C\otimes g$ will be denoted 
\begin{center}
\begin{tikzpicture}[scale=.5]
		\node [style=fixed size node] (0) at (0, 0) {$1\alpha$};
		\node [style=none] (1) at (-2, 0) {};
		\node [style=none] (2) at (-2.5, 0) {$1f$};
		\node [style=none] (3) at (2.5, 0) {$1g$};
		\node [style=none] (4) at (2, 0) {};
		\draw (1.center) to (0);
		\draw (0) to (4.center);
\end{tikzpicture}
\end{center}
and if $\beta\colon h\Rightarrow k$ is another 2-cell, the tensor product $\alpha\otimes\beta$ is denoted
\begin{center}
\begin{tikzpicture}[scale=.5]
		\node [style=fixed size node] (0) at (0, 0) {$\alpha\beta$};
		\node [style=none] (1) at (-2, 0) {};
		\node [style=none] (2) at (-2.5, 0) {$fh$};
		\node [style=none] (3) at (2.5, 0) {$gk.$};
		\node [style=none] (4) at (2, 0) {};
		\draw (1.center) to (0);
		\draw (0) to (4.center);
\end{tikzpicture}
\end{center}
\begin{rmk}
It is here important to notice that to talk about a tensored pair of morphisms $fg$ is an abuse of notation consisting of identifying the two composites $1g\circ f1$ or $f1\circ 1g$. This is fundamentally permitted by the role of the interchanger, since the interchanger axioms allow the usual calculus rule justifying the omissions in computations. The axioms for the interchanger \eqref{interchanger}, which we will make a heavy and sometimes implicit use of, especially in the proof of Theorem \ref{thmBtensorC}, are as follows:
\begin{equation}\label{axiom 1 interchanger}
\Sigma_{\id,g}=\id_g,\hspace{2em}\Sigma_{f,\id}=\id_f.
\tag{$i$}
\end{equation}
\begin{equation}
1\Sigma_{f,g}=\Sigma_{1f,g},\hspace{2em}\Sigma_{f1,g}=\Sigma_{f,1g},\hspace{2em}\Sigma_{f,g}1=\Sigma_{f,g1}.
\tag{$ii$}
\end{equation}
\begin{minipage}{.05\textwidth}
\begin{equation}\label{axiom 3 interchanger}
\phantom{c}
\tag{$iii$}
\end{equation}
\end{minipage}
\begin{minipage}{.9\textwidth}
\begin{center}

\end{center}
\end{minipage}
\end{rmk}

\subsection{Strings and pseudoadjunctions}\label{sec string adj}

\subsubsection{Pseudoadjunctions}

A coherent notion of adjunction of pseudofunctors between bicategories (or 1-cells in a possibly weak tricategory) is that of a \emph{pseudoadjunction}. A more general notion has been first introduced in \cite{BettiPower}, and is there called \emph{local adjunction}, but is probably nowadays more commonly known as \emph{lax 2-adjunction}. Let us recall these definitions and fix some notations. Two pseudofunctors $F\colon\cat C\myrightleftarrows{}\cat D\rcolon G$ (or two 1-cells in a tricategory) form a \emph{local adjunction} if there are
\begin{itemize}
\item Pseudonatural transformations (2-cells) $\eta\colon \id\Rightarrow GF,\varepsilon\colon FG\Rightarrow\id$
\item Modifications (3-cells) $s,t$, called \emph{triangulators}
\begin{center}
\begin{tikzcd}
F \arrow[rd, Rightarrow, bend right, no head] \arrow[r, "F\eta", Rightarrow] \arrow[rd, phantom, "\Arrowur s" description, bend left=1em, pos=0.65] & FGF \arrow[d, "\varepsilon F", Rightarrow] & G \arrow[r, "\eta G", Rightarrow] \arrow[rd, phantom, "\Arrowdl t" description, bend left=1em, pos=0.65] \arrow[rd, Rightarrow, no head, bend right] & GFG \arrow[d, "G\varepsilon", Rightarrow] \\
                                                                                                     & F                                          &                                                                                                       & G                                        
\end{tikzcd}
\end{center}
satisfying the \emph{swallowtail} equations:
\begin{center}\label{swallowtail1}
\begin{tikzcd}
\id_{\cat C} \arrow[r, "\eta", Rightarrow] \arrow[d, "\eta"', Rightarrow] & GF \arrow[rdd, Rightarrow, bend left=5em, no head] \arrow[d, "GF\eta", Rightarrow] \arrow[ld, phantom, "\Arrowdl\eta_\eta" description]              &    \\
GF \arrow[rrd, Rightarrow, bend right, no head] \arrow[r, "\eta GF"', Rightarrow]  & GFGF \arrow[rd, "G\varepsilon F" description, Rightarrow] \arrow[d, phantom, "\Arrowdl tF" description] \arrow[r, phantom, "\Arrowdl Gs" description] & {} \\
                                                                          & {}                                                                                                                                  & GF
\end{tikzcd}
$=\id_{\eta},$\hspace{2em}
\begin{tikzcd}
\id_{\cat D}                                                                & FG \arrow[rdd, Rightarrow, bend left=5em, no head] \arrow[l, "\varepsilon"', Rightarrow] \arrow[ld, phantom, "\Arrowdl{\varepsilon_\varepsilon}" description]                                  &                                                         \\
FG \arrow[rrd, Rightarrow, bend right, no head] \arrow[u, "\varepsilon", Rightarrow] & FGFG \arrow[u, "\varepsilon FG"', Rightarrow] \arrow[l, "FG\varepsilon", Rightarrow] \arrow[d, phantom, "\Arrowdl Ft" description] \arrow[r, phantom, "\Arrowdl sG" description] & {}                                                      \\
                                                                            & {}                                                                                                                                                            & FG \arrow[lu, "F\eta G" description, Rightarrow]
\end{tikzcd}
$=\id_\varepsilon$
\end{center}
\end{itemize}
This data is called a \emph{pseudoadjunction} whenever $s$ and $t$ are invertible. To have triangulators satisfying these equations corresponds to having a coherent structure of adjoint functors between the hom-categories $\cat D(Fc,d)\myrightleftarrows{}\cat C(c,Gd)$, and clearly to have a pseudoadjunction corresponds to these adjunctions being adjoint equivalences.

\subsubsection{Tensor-Hom}\label{subsect tensor-hom}
The special case of a pseudoadjunction defining a right closed structure on a monoidal bicategory $\cat V$ have been briefly treated in Section 7.2 of \cite{garner2015enriched}. Their version is however chosen to be \emph{incoherent} ($s$ and $t$ are only required to exist). For us, a \emph{right closed monoidal bicategory} $\cat V$ is one such that the pseudofunctor $-\otimes A\colon\cat V\to\cat V$ forms a pseudoadjunction $-\otimes A\dashv[A,-]$ with a given pseudofunctor $[A,-]\colon\cat V\to\cat V$ for every object $A$ in $\cat V$.
\begin{rmk}\label{parametric family rmk}
It is customary (at least in the usual 1-categorical setting) to extend this family of pseudoadjunctions to what is usually called a \emph{parametric family} of pseudoadjunctions, meaning one for which the family of equivalences $\cat V(B\otimes A,C)\simeq\cat V(B,[A,C])$ also come with the structure of a pseudonatural equivalence $\cat V(B\otimes-,C)\Rightarrow\cat V(B,[-,C])$ for every pair of objects $B,C$. This depends on how the pseudofunctor $[-,-]\colon\cat V\times\cat V\to\cat V$ is defined in its first variable on morphisms. The bicategorical Yoneda Lemma tells then how to do so, by imposing commutativity of the square
\begin{center}
\begin{tikzcd}
{\mathcal V(B\otimes A',C)} \arrow[d, "{\mathcal V(B\otimes f,C)}"'] \arrow[r, "\cong"] & {\mathcal V(B,[A',C])} \arrow[d, dashed] \\
{\mathcal V(B\otimes A,C)} \arrow[r, "\cong"]                                           & {\mathcal V(B,[A,C]).}                   
\end{tikzcd}
\end{center}
\end{rmk}

\subsubsection{Naturality of unit and counit}
The structure of pseudonatural transformation $\eta_f$ and $\varepsilon_f$ for the (parametric family of) pseudoadjunctions $-\otimes A\dashv[A,-]$, are 2-cells of the following types, for every 1-morphism $f\colon B\to C$ in $\cat V$,

\noindent
\begin{minipage}{.1\textwidth}
\begin{equation}\label{epsilon-eta string rule}
\phantom{c}
\end{equation}
\end{minipage}
\begin{minipage}{.9\textwidth}
\begin{center}
\begin{tikzpicture}[scale=.5]
		\node [style=none] (0) at (-7, -1.5) {};
		\node [style=none] (1) at (-7, 1.5) {};
		\node [style=none] (2) at (-2, 1.5) {};
		\node [style=none] (3) at (-2, -1.5) {};
		\node [style=none] (4) at (-1.5, -1.5) {$f$};
		\node [style=none] (5) at (-1.5, 1.5) {$\eta$};
		\node [style=none] (6) at (-7.5, -1.5) {$\eta$};
		\node [style=none] (7) at (-7.75, 1.5) {$[1,f1]$};
		\node [style=none] (8) at (7, -1.5) {};
		\node [style=none] (9) at (7, 1.5) {};
		\node [style=none] (10) at (2, 1.5) {};
		\node [style=none] (11) at (2, -1.5) {};
		\node [style=none] (12) at (1.5, -1.5) {$\varepsilon$};
		\node [style=none] (13) at (1.5, 1.5) {$f$};
		\node [style=none] (14) at (7.75, -1.5) {$[1,f]1.$};
		\node [style=none] (15) at (7.5, 1.5) {$\varepsilon$};
		\node [style=none] (16) at (-4.5, -2.25) {$B$};
		\node [style=none] (17) at (-2.75, 0) {$C$};
		\node [style=none] (18) at (-6.5, 0) {$[A,BA]$};
		\node [style=none] (19) at (-4.5, 2.25) {$[A,CA]$};
		\node [style=none] (20) at (2.5, 0) {$B$};
		\node [style=none] (21) at (4.5, 2.25) {$C$};
		\node [style=none] (22) at (4.5, -2.25) {$[A,B]A$};
		\node [style=none] (23) at (6.25, 0) {$[A,C]A$};
		
		\draw [in=180, out=0] (1.center) to (3.center);
		\draw [in=0, out=-180] (2.center) to (0.center);
		\draw [in=0, out=180] (9.center) to (11.center);
		\draw [in=180, out=0] (10.center) to (8.center);
\end{tikzpicture}
\end{center}
\end{minipage}

The naturality axiom for this structure then says that for any 2-cell $\gamma\colon f\Rightarrow g$ in a closed monoidal bicategory $\cat V$, we have, from \eqref{naturality axiom},

\noindent
\begin{minipage}{.1\textwidth}
\begin{equation}\label{naturality eta}
\phantom{c}
\end{equation}
\end{minipage}
\begin{minipage}{.9\textwidth}
\begin{center}
\begin{tikzpicture}[scale=.5]
		\node [style=none] (0) at (-1.5, -1) {$\eta$};
		\node [style=none] (1) at (-1, 1) {$[1,g1]$};
		\node [style=none] (2) at (1.5, -1) {$f$};
		\node [style=none] (3) at (1.5, 1) {$\eta$};
		\node [style=none] (4) at (0, 0) {$=$};
		\node [style=none] (5) at (-2, -1) {};
		\node [style=none] (6) at (-2, 1) {};
		\node [style=none] (7) at (2, -1) {};
		\node [style=none] (8) at (2, 1) {};
		\node [style=none] (9) at (4.5, -1) {};
		\node [style=none] (10) at (4.5, 1) {};
		\node [style=squared] (11) at (6.5, 1) {$[1,\gamma1]$};
		\node [style=none] (12) at (8.5, 1) {};
		\node [style=none] (13) at (-4.5, -1) {};
		\node [style=none] (14) at (-4.5, 1) {};
		\node [style=squared] (15) at (-6.5, -1) {$\gamma$};
		\node [style=none] (16) at (-8.5, -1) {};
		\node [style=none] (17) at (-8.5, 1) {};
		\node [style=none] (18) at (-9, -1) {$f$};
		\node [style=none] (19) at (-9, 1) {$\eta$};
		\node [style=none] (20) at (8.5, -1) {};
		\node [style=none] (21) at (9, -1) {$\eta$};
		\node [style=none] (22) at (9.5, 1) {$[1,g1]$};
		\node [style=none] (23) at (-4.5, -1.5) {$g$};
		\node [style=none] (24) at (4.5, 1.5) {$[1,f1]$};
		
		\draw [in=0, out=180] (5.center) to (14.center);
		\draw (14.center) to (17.center);
		\draw (16.center) to (15);
		\draw (15) to (13.center);
		\draw [in=180, out=0] (13.center) to (6.center);
		\draw (10.center) to (11);
		\draw (11) to (12.center);
		\draw [in=-180, out=0] (7.center) to (10.center);
		\draw [in=-180, out=0] (8.center) to (9.center);
		\draw (9.center) to (20.center);
\end{tikzpicture}

\end{center}
\end{minipage}

and

\noindent
\begin{minipage}{.1\textwidth}
\begin{equation}\label{naturality counit}
\phantom{c}
\end{equation}
\end{minipage}
\begin{minipage}{.9\textwidth}
\begin{center}
\begin{tikzpicture}[scale=.5]
		\node [style=none] (0) at (-1.5, 1) {$\varepsilon$};
		\node [style=none] (1) at (-1, -1) {$[1,g]1$};
		\node [style=none] (2) at (1.5, 1) {$f$};
		\node [style=none] (3) at (1.5, -1) {$\varepsilon$};
		\node [style=none] (4) at (0, 0) {$=$};
		\node [style=none] (5) at (-2, 1) {};
		\node [style=none] (6) at (-2, -1) {};
		\node [style=none] (7) at (2, 1) {};
		\node [style=none] (8) at (2, -1) {};
		\node [style=none] (9) at (4.5, 1) {};
		\node [style=none] (10) at (4.5, -1) {};
		\node [style=squared] (11) at (6.5, -1) {$[1,\gamma]1$};
		\node [style=none] (12) at (8.5, -1) {};
		\node [style=none] (13) at (-4.5, 1) {};
		\node [style=none] (14) at (-4.5, -1) {};
		\node [style=squared] (15) at (-6.5, 1) {$\gamma$};
		\node [style=none] (16) at (-8.5, 1) {};
		\node [style=none] (17) at (-8.5, -1) {};
		\node [style=none] (18) at (-9, 1) {$f$};
		\node [style=none] (19) at (-9, -1) {$\varepsilon$};
		\node [style=none] (20) at (8.5, 1) {};
		\node [style=none] (21) at (9, 1) {$\varepsilon$};
		\node [style=none] (22) at (9.5, -1) {$[1,g]1.$};
		\node [style=none] (23) at (-4.5, 1.5) {$g$};
		\node [style=none] (24) at (4.5, -1.5) {$[1,f]1$};
		
		\draw [in=0, out=-180] (5.center) to (14.center);
		\draw (14.center) to (17.center);
		\draw (16.center) to (15);
		\draw (15) to (13.center);
		\draw [in=-180, out=0] (13.center) to (6.center);
		\draw (10.center) to (11);
		\draw (11) to (12.center);
		\draw [in=180, out=0] (7.center) to (10.center);
		\draw [in=180, out=0] (8.center) to (9.center);
		\draw (9.center) to (20.center);
\end{tikzpicture}
\end{center}
\end{minipage}

\subsubsection{Triangulators}
The string-diagrammatic form for the structure of a pseudoadjunction is

\noindent
\begin{minipage}{.1\textwidth}
\begin{equation}\label{triangulators}
\phantom{c}
\end{equation}
\end{minipage}
\begin{minipage}{.9\textwidth}
\begin{center}
\begin{tikzpicture}[scale=.5]
		\node [style=squared] (0) at (0, 0) {$s$};
		\node [style=none] (1) at (3, 2) {};
		\node [style=none] (2) at (3, -2) {};
		\node [style=none] (3) at (3.5, 2) {$\varepsilon$};
		\node [style=none] (4) at (3.5, -2) {$\eta 1$};
		\node [style=none] (5) at (2, 0) {$[A,BA]A$};
		\node [style=none] (6) at (-1.5, 0) {$BA$};
		
		\draw [in=75, out=-180] (1.center) to (0);
		\draw [in=-180, out=-75] (0) to (2.center);
\end{tikzpicture}
\hspace{1em}\raisebox{3em}{\text{,}}\hspace{3em}
\begin{tikzpicture}[scale=.5]
		\node [style=squared] (0) at (0, 0) {$t$};
		\node [style=none] (1) at (-3, 1.75) {};
		\node [style=none] (2) at (-3, -2) {};
		\node [style=none] (5) at (2, 0) {$[A,B]$};
		\node [style=none] (6) at (-2.5, 0) {$[A,[A,B]A]$};
		\node [style=none] (7) at (-3.75, 1.75) {$[1,\varepsilon]$};
		\node [style=none] (8) at (-3.5, -2) {$\eta$};
		
		\draw [in=105, out=0] (1.center) to (0);
		\draw [in=0, out=-105] (0) to (2.center);
\end{tikzpicture}
\end{center}
\end{minipage}

and the swallowtail equations then translate to

\noindent
\begin{minipage}{.1\textwidth}
\begin{equation}\label{swallowtail1}
\phantom{c}
\end{equation}
\end{minipage}
\begin{minipage}{.9\textwidth}
\begin{center}
\begin{tikzpicture}[scale=.5]
		\node [style=squared] (0) at (-7.75, 1.5) {$[1,s]$};
		\node [style=squared] (1) at (-3.75, 1.5) {$t$};
		\node [style=none] (2) at (-5.75, 0) {};
		\node [style=none] (3) at (-8.75, -1) {};
		\node [style=none] (4) at (-2.75, -1) {};
		\node [style=none] (5) at (-5.75, 3.2) {$[1,\varepsilon]$};
		\node [style=none] (6) at (-4, 0.25) {$\eta$};
		\node [style=none] (7) at (-7.5, 0.25) {$[1,\eta 1]$};
		\node [style=none] (8) at (-9.25, -1) {$\eta$};
		\node [style=none] (9) at (-2.25, -1) {$\eta$};
		\node [style=none] (10) at (0, 0) {$=$};
		\node [style=none] (11) at (2, 0) {$\eta$};
		\node [style=none] (12) at (2.5, 0) {};
		\node [style=none] (13) at (6, 0) {};
		\node [style=none] (14) at (6.5, 0) {$\eta$};
		
		\draw [in=105, out=75, looseness=0.75] (0) to (1);
		\draw [in=45, out=-105] (1) to (2.center);
		\draw [in=-135, out=0, looseness=0.75] (3.center) to (2.center);
		\draw [in=135, out=-75] (0) to (2.center);
		\draw [in=180, out=-45, looseness=0.75] (2.center) to (4.center);
		\draw (12.center) to (13.center);
\end{tikzpicture}
\end{center}
\end{minipage}

\noindent
\begin{minipage}{.1\textwidth}
\begin{equation}\label{swallowtail2}
\phantom{c}
\end{equation}
\end{minipage}
\begin{minipage}{.9\textwidth}
\begin{center}
\begin{tikzpicture}[scale=.5]
		\node [style=squared] (0) at (-7.75, -1.5) {$s$};
		\node [style=squared] (1) at (-3.75, -1.5) {$t1$};
		\node [style=none] (2) at (-5.75, 0) {};
		\node [style=none] (3) at (-8.75, 1) {};
		\node [style=none] (4) at (-2.75, 1) {};
		\node [style=none] (5) at (-5.75, -3.2) {$\eta 1$};
		\node [style=none] (6) at (-4, -0.25) {$[1,\varepsilon]1$};
		\node [style=none] (7) at (-7.5, -0.25) {$\varepsilon$};
		\node [style=none] (8) at (-9.25, 1) {$\varepsilon$};
		\node [style=none] (9) at (-2.25, 1) {$\varepsilon$};
		\node [style=none] (10) at (0, 0) {$=$};
		\node [style=none] (11) at (2, 0) {$\varepsilon$};
		\node [style=none] (12) at (2.5, 0) {};
		\node [style=none] (13) at (6, 0) {};
		\node [style=none] (14) at (6.5, 0) {$\varepsilon$};
		
		\draw [in=-105, out=-75, looseness=0.75] (0) to (1);
		\draw [in=-45, out=105] (1) to (2.center);
		\draw [in=135, out=0, looseness=0.75] (3.center) to (2.center);
		\draw [in=-135, out=75] (0) to (2.center);
		\draw [in=-180, out=45, looseness=0.75] (2.center) to (4.center);
		\draw (12.center) to (13.center);
\end{tikzpicture}

\end{center}
\end{minipage}

Moreover, there is the general set of rules provided by $s$ and $t$ being modifications. For that we refer to the general case \eqref{modification axiom}.

\section{Building $\cat V$-bicategories}\label{section Vbicat}

Enriched bicategories have been introduced by Garner--Shulman in \cite{garner2015enriched}. In their setting, no use of strictification was assumed in the definition. For our purposes, it is convenient to directly consider enrichments over Gray monoids and - subsequently, for the constructions requiring a braiding - over semi-strict braided monoidal bicategories. The underlying reasoning is that these assumptions don't really restrain the generality of our results and constructions, thanks to Theorem \ref{strictification gurski}.

\subsection{The tricategory of $\cat V$-bicategories}

As proved in Section 4 of \cite{garner2015enriched}, $\cat V$-bicategories, $\cat V$-pseudofunctors, $\cat V$-pseudonatural transformations and $\cat V$-modifications assemble into a tricategory. Subsequently we recall terminology, structure and axioms for each kind of cell of this tricategory.

\subsubsection{$\cat V$-bicategories}

\begin{defn}[\cite{garner2015enriched} Definition 3.1]
Let $\cat V$ be a monoidal bicategory. A \emph{$\cat V$-bicategory} $\cat C$ is the data of
\begin{itemize}[leftmargin=*]
\item A class of objects $\cat C_0$
\item For every pair of objects $c,d$ in $\cat C_0$ an object $\cat C(c,d)$ in $\cat V$, called \emph{hom-object}, together with composition and unit 1-morphisms, for every tiple of objects $c,d,e$
\begin{align*}
&m_{c,d,e}\colon\cat{C}(d,e)\otimes\cat C (c,d)\longrightarrow\cat C (c,e)\\
&u_c\colon \mathbb{1}\longrightarrow \cat C (c,c)
\end{align*}
\item 2-isomorphisms in $\cat V$ (called \emph{associator} and \emph{left} and \emph{right unitors})
\begin{center}
\begin{tikzcd}
	{\mathcal C(e,f)\otimes \mathcal C(d,e)\otimes \mathcal C(c,d)} & {\mathcal C(e,f)\otimes \mathcal C(c,e)} \\
	{ \mathcal C(d,f)\otimes\mathcal C(c,d)} & {\mathcal C(c,f)}
	\arrow["{1\otimes m}", from=1-1, to=1-2]
	\arrow["{m\otimes 1}"', from=1-1, to=2-1]
	\arrow["m", from=1-2, to=2-2]
	\arrow[phantom, "\alpha\Arrowur"{description}, shift right=2, shorten <=25pt, shorten >=25pt, Rightarrow, from=2-1, to=1-2]
	\arrow["m"', from=2-1, to=2-2]
\end{tikzcd}
\end{center}
\begin{center}
\begin{tikzcd}
	{\mathbb 1\otimes\mathcal C(c,d)} &&& {\mathcal C(c,d)\otimes\mathbb 1} \\
	{\mathcal C(d,d)\otimes\mathcal C(c,d)} && {\mathcal C(c,d)} & {\mathcal C(c,d)\otimes\mathcal C(c,c)} && {\mathcal C(c,d)}
	\arrow["{u\otimes 1}"', from=1-1, to=2-1]
	\arrow[""{name=0, anchor=center, inner sep=0}, curve={height=-12pt}, equals, from=1-1, to=2-3]
	\arrow["{1\otimes u}"', from=1-4, to=2-4]
	\arrow[""{name=1, anchor=center, inner sep=0}, curve={height=-12pt}, equals, from=1-4, to=2-6]
	\arrow["m"', from=2-1, to=2-3]
	\arrow["m"', from=2-4, to=2-6]
	\arrow[phantom,"\lambda\Arrowur"{description}, shift right=2, shorten <=16pt, shorten >=16pt, Rightarrow, from=2-1, to=0]
	\arrow[phantom, "\rho\Arrowur"{description}, shift right=2, shorten <=16pt, shorten >=16pt, Rightarrow, from=2-4, to=1]
\end{tikzcd}
\end{center}
\end{itemize}
subject to the identity and associativity coherence axioms below
\vspace{1em}

\noindent
\begin{minipage}{.1\textwidth}
\begin{equation}\label{IC Vbicat}
\phantom{c}
\tag{IC}
\end{equation}
\end{minipage}
\begin{minipage}{.9\textwidth}
\begin{center}
\begin{tikzpicture}[scale=.5]
		\node [style=none] (0) at (0, 0) {$=$};
		\node [style=fixed size node] (1) at (-6.75, 1) {$\alpha$};
		\node [style=fixed size node] (2) at (-3.75, -1) {$1\lambda$};
		\node [style=fixed size node] (3) at (3.75, -1) {$\rho 1$};
		\node [style=none] (4) at (-8.75, 2) {};
		\node [style=none] (5) at (-8.75, 0) {};
		\node [style=none] (6) at (-8.75, -2) {};
		\node [style=none] (7) at (-1.75, 2) {};
		\node [style=none] (8) at (1.5, 2) {$m$};
		\node [style=none] (9) at (1.5, 0) {$m1$};
		\node [style=none] (10) at (1.5, -2) {$1u1$};
		\node [style=none] (11) at (8.75, 2) {};
		\node [style=none] (12) at (-4.75, 2) {};
		\node [style=none] (13) at (-9.25, 2) {$m$};
		\node [style=none] (14) at (-9.25, 0) {$m1$};
		\node [style=none] (15) at (-9.25, -2) {$1u1$};
		\node [style=none] (16) at (-1.25, 2) {$m$};
		\node [style=none] (17) at (2, 2) {};
		\node [style=none] (18) at (2, 0) {};
		\node [style=none] (19) at (2, -2) {};
		\node [style=none] (20) at (-4.75, 0.25) {$1m$};
		\node [style=none] (21) at (9.25, 2) {$m$};
		
		\draw [in=120, out=0] (4.center) to (1);
		\draw [in=0, out=-120] (1) to (5.center);
		\draw [in=60, out=-180, looseness=1.25] (12.center) to (1);
		\draw [in=180, out=0] (12.center) to (7.center);
		\draw [in=120, out=-60] (1) to (2);
		\draw [in=0, out=-135, looseness=0.75] (2) to (6.center);
		\draw [in=-120, out=0] (19.center) to (3);
		\draw [in=0, out=120] (3) to (18.center);
		\draw (17.center) to (11.center);
\end{tikzpicture}
\end{center}
\end{minipage}
\vspace{1em}

\noindent
\begin{minipage}{.1\textwidth}
\begin{equation}\label{AC Vbicat}
\phantom{c}
\tag{AC}
\end{equation}
\end{minipage}
\begin{minipage}{.9\textwidth}
\begin{center}
\begin{tikzpicture}[scale=.5]
		\node [style=fixed size node] (0) at (-5.5, 1) {$\alpha$};
		\node [style=fixed size node] (1) at (-7.5, -1) {$\alpha 1$};
		\node [style=fixed size node] (2) at (-3.5, -1) {$1\alpha$};
		\node [style=fixed size node] (4) at (4, 1) {$\alpha$};
		\node [style=none] (5) at (-9, 2) {};
		\node [style=none] (6) at (-9, 0) {};
		\node [style=none] (7) at (-9, -2) {};
		\node [style=none] (8) at (-9.5, 2) {$m$};
		\node [style=none] (9) at (-9.5, 0) {$m1$};
		\node [style=none] (10) at (-9.5, -2) {$m11$};
		\node [style=none] (11) at (-2, 2) {};
		\node [style=none] (12) at (-2, 0) {};
		\node [style=none] (13) at (-2, -2) {};
		\node [style=none] (14) at (-1.5, 2) {$m$};
		\node [style=none] (15) at (-1.5, 0) {$1m$};
		\node [style=none] (16) at (-1.5, -2) {$11m$};
		\node [style=none] (17) at (0, 0) {$=$};
		\node [style=none] (21) at (2, 2) {};
		\node [style=none] (22) at (2, 0) {};
		\node [style=none] (23) at (2, -2) {};
		\node [style=none] (24) at (1.5, 2) {$m$};
		\node [style=none] (25) at (1.5, 0) {$m1$};
		\node [style=none] (26) at (1.5, -2) {$m11$};
		\node [style=none] (27) at (9, 2) {};
		\node [style=none] (28) at (9, 0) {};
		\node [style=none] (29) at (9, -2) {};
		\node [style=none] (30) at (9.5, 2) {$m$};
		\node [style=none] (31) at (9.5, 0) {$1m$};
		\node [style=none] (32) at (9.5, -2) {$11m$};
		\node [style=fixed size node] (33) at (7, 1) {$\alpha$};
		\node [style=none] (34) at (-6.75, 0.25) {$m1$};
		\node [style=none] (35) at (-4.25, 0.25) {$1m$};
		\node [style=none] (36) at (-5.5, -2.25) {$1m1$};
		\node [style=none] (37) at (5.5, 2) {$m$};
		\node [style=none] (38) at (6.75, -0.25) {$m1$};
		\node [style=none] (39) at (4.25, -0.25) {$1m$};
		
		\draw [in=120, out=0] (6.center) to (1);
		\draw [in=0, out=-120] (1) to (7.center);
		\draw [in=-135, out=-45, looseness=0.75] (1) to (2);
		\draw [in=-120, out=60] (1) to (0);
		\draw [in=0, out=135, looseness=0.75] (0) to (5.center);
		\draw [in=120, out=-60] (0) to (2);
		\draw [in=-60, out=-180] (13.center) to (2);
		\draw [in=-180, out=60] (2) to (12.center);
		\draw [in=180, out=45, looseness=0.75] (0) to (11.center);
		\draw [in=135, out=0] (21.center) to (4);
		\draw [in=0, out=-135] (4) to (22.center);
		\draw [in=135, out=45] (4) to (33);
		\draw [in=-180, out=-60, looseness=0.75] (4) to (29.center);
		\draw [in=-120, out=0, looseness=0.75] (23.center) to (33);
		\draw [in=180, out=-45] (33) to (28.center);
		\draw [in=-180, out=45] (33) to (27.center);
\end{tikzpicture}
\end{center}
\end{minipage}
\end{defn}

\begin{ex}
The unit $\cat V$-bicategory $\cat I$ has one object $\ast$ and the only hom-object $\cat I(\ast,\ast)=\mathbb 1$. Since we deal in fact with a Gray monoid, multiplication and unit can be chosen to be identical, and so are $\alpha,\lambda,\rho$. This makes all coherence axioms trivially satisfied.
\end{ex}

\begin{rmk}\label{underlying bicat}
For any $\cat V$-bicategory $\cat C$ there is an associated underlying bicategory $\cat C^\flat$ having the same objects and hom-categories $\cat C^\flat(c,d)\coloneqq \cat V(\mathbb 1,\cat C(c,d))$. The left and right unitors and the associator for $\cat C^\flat$ are easily build from $\lambda,\rho$ and $\alpha$ respectively.
\end{rmk}

\subsubsection{$\cat V$-pseudofunctors}

\begin{defn}\label{def pseudofunctor}
Let $\cat C,\cat D$ be $\cat V$-bicategories. A $\cat V$\emph{-pseudofunctor} $F\colon\cat C\to\cat D$ consists of
\begin{itemize}
\item an object $F(a)$ in $\cat D_0$ for every object $a$ in $\cat C_0$
\item For every pair of objects $(a,b)$ of $\cat C$, a morphism in $\cat V$
\[F_{a,b}\colon\cat C(a,b)\longrightarrow\cat D(Fa,Fb)\]
\item Two 2-cells in $\cat V$

\begin{minipage}{.05\textwidth}
\begin{equation}\label{fun e un}
\phantom{c}
\end{equation}
\end{minipage}
\begin{minipage}{.95\textwidth}
\begin{center}
\begin{tikzpicture}[scale=.5]
		\node [style=none] (0) at (-4.25, -1.75) {$FF$};
		\node [style=none] (1) at (-4.25, 1.75) {$m$};
		\node [style=none] (2) at (-3.75, -1.75) {};
		\node [style=none] (3) at (-3.75, 1.75) {};
		\node [style=none] (4) at (3.75, -1.75) {};
		\node [style=none] (5) at (3.75, 1.75) {};
		\node [style=none] (6) at (4.25, -1.75) {$m$};
		\node [style=none] (7) at (4.25, 1.75) {$F$};
		\node [style=none] (8) at (0, -2.25) {$\mathcal C(b,c)\mathcal C(a,b)$};
		\node [style=none] (9) at (3.5, 0) {$\mathcal C(a,c)$};
		\node [style=none] (10) at (-3.5, 0) {$\mathcal D(Fb,Fc)\mathcal D(Fa,Fb)$};
		\node [style=none] (11) at (0, 2) {$\mathcal D(Fa,Fc)$};

		\draw [in=-180, out=0] (2.center) to (5.center);
		\draw [in=180, out=0] (3.center) to (4.center);
\end{tikzpicture}
\hspace{1em}\raisebox{2em}{,}\hspace{2em}
\begin{tikzpicture}[scale=.5]
		\node [style=none] (0) at (0, 0) {};
		\node [style=none] (1) at (-3, 0) {};
		\node [style=none] (2) at (3, 2) {};
		\node [style=none] (3) at (3, -2) {};
		\node [style=none] (4) at (-3.5, 0) {$u$};
		\node [style=none] (5) at (3.5, 2) {$F$};
		\node [style=none] (6) at (3.5, -2) {$u$};
		\node [style=none] (7) at (-1.25, -1.75) {$\mathbb 1$};
		\node [style=none] (8) at (-1.25, 1.75) {$\mathcal D(Fa,Fa)$};
		\node [style=none] (9) at (2.25, 0) {$\mathcal C(a,a)$};

		\draw (1.center) to (0.center);
		\draw [in=-180, out=60] (0.center) to (2.center);
		\draw [in=180, out=-75] (0.center) to (3.center);
\end{tikzpicture}

\end{center}
\end{minipage}
\end{itemize}
subject to the following axioms:

\noindent
\begin{minipage}{.1\textwidth}
\begin{equation}\label{psfun ax1}
\phantom{c}
\end{equation}
\end{minipage}
\begin{minipage}{.9\textwidth}
\begin{center}
\begin{tikzpicture}[scale=.5]
		\node [style=none] (3) at (-8.25, -2.25) {};
		\node [style=none] (4) at (-11.25, 0) {};
		\node [style=none] (5) at (-11.25, -2.25) {};
		\node [style=none] (15) at (-11.25, 2.25) {};
		\node [style=none] (18) at (-12, -2.25) {$FFF$};
		\node [style=none] (19) at (-11.75, 2.25) {$m$};
		\node [style=none] (20) at (-11.75, 0) {$m1$};
		\node [style=none] (21) at (-6.75, -2.75) {$m1$};
		\node [style=none] (22) at (-8.25, 0.5) {$FF$};
		\node [style=none] (23) at (-5.25, 2.25) {};
		\node [style=none] (24) at (-5.25, 0) {};
		\node [style=none] (25) at (-8.25, 2.25) {};
		\node [style=none] (27) at (-5.25, -0.5) {$m$};
		\node [style=none] (28) at (-5.25, -2.25) {};
		\node [style=fixed size node] (29) at (-3.5, -1.25) {$\alpha$};
		\node [style=none] (30) at (-2, 0) {};
		\node [style=none] (31) at (-2, -2.25) {};
		\node [style=none] (32) at (-2, 2.25) {};
		\node [style=none] (33) at (0, 0) {$=$};
		\node [style=none] (34) at (-1.5, 0) {$m$};
		\node [style=none] (35) at (-1.5, -2.25) {$1m$};
		\node [style=none] (36) at (-1.5, 2.25) {$F$};
		\node [style=none] (42) at (1.25, -2.25) {$FFF$};
		\node [style=none] (43) at (1.5, 2.25) {$m$};
		\node [style=none] (44) at (1.5, 0) {$m1$};
		\node [style=none] (68) at (8.25, 2.25) {};
		\node [style=none] (69) at (11.25, 0) {};
		\node [style=none] (70) at (11.25, 2.25) {};
		\node [style=none] (71) at (11.25, -2.25) {};
		\node [style=none] (75) at (5.5, 0.5) {$m1$};
		\node [style=none] (76) at (8.25, -0.5) {$FF$};
		\node [style=none] (77) at (5.25, -2.25) {};
		\node [style=none] (78) at (5.25, 0) {};
		\node [style=none] (79) at (8.25, -2.25) {};
		\node [style=none] (80) at (6.75, 2.75) {$m$};
		\node [style=none] (81) at (5.25, 2.25) {};
		\node [style=fixed size node] (82) at (3.5, 1.25) {$\alpha$};
		\node [style=none] (83) at (2, 0) {};
		\node [style=none] (84) at (2, 2.25) {};
		\node [style=none] (85) at (2, -2.25) {};
		\node [style=none] (86) at (11.75, 0) {$m$};
		\node [style=none] (87) at (11.75, -2.25) {$1m$};
		\node [style=none] (88) at (11.75, 2.25) {$F$};
		\draw [in=180, out=0, looseness=0.75] (4.center) to (3.center);
		\draw [in=165, out=0, looseness=0.75] (25.center) to (24.center);
		\draw [in=180, out=0] (15.center) to (25.center);
		\draw [in=180, out=0] (3.center) to (28.center);
		\draw [in=120, out=-15] (24.center) to (29);
		\draw [in=0, out=-120] (29) to (28.center);
		\draw [in=-60, out=180] (31.center) to (29);
		\draw [in=-180, out=60] (29) to (30.center);
		\draw [in=360, out=180] (32.center) to (23.center);
		\draw [in=0, out=180, looseness=0.50] (23.center) to (5.center);
		\draw [in=0, out=-180, looseness=0.75] (69.center) to (68.center);
		\draw [in=-15, out=-180, looseness=0.75] (79.center) to (78.center);
		\draw [in=0, out=-180] (71.center) to (79.center);
		\draw [in=0, out=-180] (68.center) to (81.center);
		\draw [in=-60, out=165] (78.center) to (82);
		\draw [in=-180, out=60] (82) to (81.center);
		\draw [in=120, out=0] (84.center) to (82);
		\draw [in=0, out=-120] (82) to (83.center);
		\draw [in=180, out=0] (85.center) to (77.center);
		\draw [in=-180, out=0, looseness=0.50] (77.center) to (70.center);
\end{tikzpicture}
\end{center}
\end{minipage}

\noindent
\begin{minipage}{.1\textwidth}
\begin{equation}\label{psfun ax2}
\phantom{c}
\end{equation}
\end{minipage}
\begin{minipage}{.9\textwidth}
\begin{center}
\begin{tikzpicture}[scale=.5]
		\node [style=none] (0) at (1.5, 2) {$m$};
		\node [style=none] (1) at (2, 2) {};
		\node [style=none] (2) at (1.5, 0) {$u1$};
		\node [style=none] (3) at (2, 0) {};
		\node [style=none] (4) at (1.5, -2) {$1F$};
		\node [style=none] (5) at (2, -2) {};
		\node [style=none] (6) at (10, -2) {};
		\node [style=none] (7) at (10.5, -2) {$F$};
		\node [style=fixed size node] (8) at (3.5, 1) {$\lambda$};
		\node [style=none] (9) at (-10.5, 2) {$m$};
		\node [style=none] (10) at (-10, 2) {};
		\node [style=none] (11) at (-10.5, 0) {$u1$};
		\node [style=none] (12) at (-10, 0) {};
		\node [style=none] (13) at (-10.5, -2) {$1F$};
		\node [style=none] (14) at (-10, -2) {};
		\node [style=none] (15) at (-2, 2) {};
		\node [style=none] (16) at (-1.5, 2) {$F$};
		\node [style=none] (17) at (0, 0) {$=$};
		\node [style=none] (18) at (-8.75, 0) {};
		\node [style=none] (20) at (-7.25, 1) {};
		\node [style=none] (21) at (-8.5, -2) {};
		\node [style=none] (22) at (-5.75, -1.75) {};
		\node [style=fixed size node] (23) at (-2.75, -0.5) {$\lambda$};
		\node [style=none] (24) at (-6, 2) {};
		\node [style=none] (25) at (-5.75, 0) {};
		\node [style=none] (26) at (-2.75, 0.5) {$m$};
		\node [style=none] (27) at (-7.25, 1.25) {$F1$};
		\node [style=none] (28) at (-4.75, -2.25) {$u1$};
		\node [style=none] (29) at (-4.75, -0.25) {$FF$};
		\node [style=none] (30) at (-8.5, -1) {$u1$};

		\draw [in=180, out=0] (5.center) to (6.center);
		\draw [in=120, out=0] (1.center) to (8);
		\draw [in=0, out=-120] (8) to (3.center);
		\draw (12.center) to (18.center);
		\draw [in=-180, out=60] (18.center) to (20.center);
		\draw [in=180, out=0] (14.center) to (21.center);
		\draw [in=180, out=-60, looseness=0.75] (18.center) to (22.center);
		\draw (10.center) to (24.center);
		\draw [in=105, out=0] (24.center) to (23);
		\draw [in=0, out=-120, looseness=0.75] (23) to (22.center);
		\draw [in=240, out=0] (21.center) to (25.center);
		\draw [in=180, out=0] (25.center) to (15.center);
		\draw [in=105, out=0] (20.center) to (25.center);
\end{tikzpicture}
\end{center}
\end{minipage}

\noindent
\begin{minipage}{.1\textwidth}
\begin{equation}\label{psfun ax3}
\phantom{c}
\end{equation}
\end{minipage}
\begin{minipage}{.9\textwidth}
\begin{center}
\begin{tikzpicture}[scale=.5]
		\node [style=none] (0) at (1.5, 2) {$m$};
		\node [style=none] (1) at (2, 2) {};
		\node [style=none] (2) at (1.5, 0) {$1u$};
		\node [style=none] (3) at (2, 0) {};
		\node [style=none] (4) at (1.5, -2) {$F1$};
		\node [style=none] (5) at (2, -2) {};
		\node [style=none] (6) at (10, -2) {};
		\node [style=none] (7) at (10.5, -2) {$F$};
		\node [style=fixed size node] (8) at (3.5, 1) {$\rho$};
		\node [style=none] (9) at (-10.5, 2) {$m$};
		\node [style=none] (10) at (-10, 2) {};
		\node [style=none] (11) at (-10.5, 0) {$1u$};
		\node [style=none] (12) at (-10, 0) {};
		\node [style=none] (13) at (-10.5, -2) {$F1$};
		\node [style=none] (14) at (-10, -2) {};
		\node [style=none] (15) at (-2, 2) {};
		\node [style=none] (16) at (-1.5, 2) {$F$};
		\node [style=none] (17) at (0, 0) {$=$};
		\node [style=none] (18) at (-8.75, 0) {};
		\node [style=none] (20) at (-7.25, 1) {};
		\node [style=none] (21) at (-8.5, -2) {};
		\node [style=none] (22) at (-5.75, -1.75) {};
		\node [style=fixed size node] (23) at (-2.75, -0.5) {$\rho$};		
		\node [style=none] (30) at (-8.5, -1) {$1u$};
		\node [style=none] (24) at (-6, 2) {};
		\node [style=none] (25) at (-5.75, 0) {};
		\node [style=none] (26) at (-2.75, 0.5) {$m$};
		\node [style=none] (27) at (-7.25, 1.25) {$1F$};
		\node [style=none] (28) at (-4.75, -2.25) {$1u$};
		\node [style=none] (29) at (-4.75, -0.25) {$FF$};

		\draw [in=180, out=0] (5.center) to (6.center);
		\draw [in=120, out=0] (1.center) to (8);
		\draw [in=0, out=-120] (8) to (3.center);
		\draw (12.center) to (18.center);
		\draw [in=-180, out=60] (18.center) to (20.center);
		\draw [in=180, out=0] (14.center) to (21.center);
		\draw [in=180, out=-60, looseness=0.75] (18.center) to (22.center);
		\draw (10.center) to (24.center);
		\draw [in=105, out=0] (24.center) to (23);
		\draw [in=0, out=-120, looseness=0.75] (23) to (22.center);
		\draw [in=240, out=0] (21.center) to (25.center);
		\draw [in=180, out=0] (25.center) to (15.center);
		\draw [in=105, out=0] (20.center) to (25.center);
\end{tikzpicture}

\end{center}
\end{minipage}
\end{defn}

\begin{rmk}
A non-trivial example of $\cat V$-pseudofunctor will be provided at Section \ref{subsect hom pseudo}.
\end{rmk}

\subsubsection{$\cat V$-pseudonatural transformations}\label{subsect vpsnat}

Let $\cat D$ be a $\cat V$-bicategory, for $\cat V$ a monoidal bicategory. We are going to introduce some 2-cells that will be needed for Definition \ref{def Vpsnat}. First, for every morphism $f\colon \mathbb 1\to \cat D(x,y)$ in $\cat V$, we are going to use notations $f^\ast$ and $f_\ast$ to denote the composites
\[\cat D(y,z)\overset{1f}\longrightarrow\cat D(y,z)\cat D(x,y)\overset{m}\longrightarrow\cat D(x,z)\hspace{2em}\text{and}\hspace{2em}\cat D(z,x)\overset{f1}\longrightarrow\cat D(x,y)\cat D(z,x)\overset{m}\longrightarrow\cat D(z,y)\]
respectively. Then, there are four 2-isomorphisms exhibiting pseudonaturality in each variable of unit and multiplication:

\begin{minipage}{.05\textwidth}
\begin{equation}\label{pseudonaturality of m}
\phantom{c}
\end{equation}
\end{minipage}
\begin{minipage}{.95\textwidth}
\begin{center}
\begin{tikzpicture}[scale=.5]
		\node [style=none] (0) at (-3, -1.5) {};
		\node [style=none] (1) at (-3, 1.5) {};
		\node [style=squared] (2) at (0, 0) {$u_f$};a
		\node [style=none] (3) at (3.25, 1.5) {};
		\node [style=none] (4) at (3.25, -1.5) {};
		\node [style=none] (5) at (3.75, -1.5) {$u$};
		\node [style=none] (6) at (3.75, 1.5) {$f_\ast$};
		\node [style=none] (7) at (-3.5, -1.5) {$u$};
		\node [style=none] (8) at (-3.5, 1.5) {$f^\ast$};
		\node [style=none] (9) at (0, -2) {$\mathbb 1$};
		\node [style=none] (10) at (-2.5, 0) {$\mathcal D(y,y)$};
		\node [style=none] (11) at (2.5, 0) {$\mathcal D(x,x)$};
		\node [style=none] (12) at (0, 2) {$\mathcal D(x,y)$};

		\draw [in=120, out=0] (1.center) to (2);
		\draw [in=-180, out=60] (2) to (3.center);
		\draw [in=-60, out=180] (4.center) to (2);
		\draw [in=0, out=-120] (2) to (0.center);
\end{tikzpicture}
\hspace{1em}\raisebox{3em}{,}\hspace{2em}
\begin{tikzpicture}[scale=.5]
		\node [style=none] (0) at (-3.25, -1.75) {};
		\node [style=none] (1) at (-3.25, 1.75) {};
		\node [style=squared] (2) at (0, 0) {$m_f$};
		\node [style=none] (3) at (3.75, 1.75) {};
		\node [style=none] (4) at (3.75, -1.75) {};
		\node [style=none] (5) at (4.25, -1.75) {$f^\ast1$};
		\node [style=none] (6) at (4.25, 1.75) {$m$};
		\node [style=none] (7) at (-3.75, -1.75) {$1f_\ast$};
		\node [style=none] (8) at (-3.75, 1.75) {$m$};
		\node [style=none] (9) at (0, -2.25) {$\mathcal D(y,z)\mathcal D(w,x)$};
		\node [style=none] (10) at (-3.25, 0) {$\mathcal D(y,z)\mathcal D(w,y)$};
		\node [style=none] (11) at (3.25, 0) {$\mathcal D(x,z)\mathcal D(w,x)$};
		\node [style=none] (12) at (0, 2.25) {$\mathcal D(w,z)$};

		\draw [in=120, out=0] (1.center) to (2);
		\draw [in=-180, out=60] (2) to (3.center);
		\draw [in=-60, out=180] (4.center) to (2);
		\draw [in=0, out=-120] (2) to (0.center);
\end{tikzpicture}
\end{center}
\end{minipage}
along with

\begin{minipage}{.05\textwidth}
\begin{equation}\label{m crossing}
\phantom{c}
\end{equation}
\end{minipage}
\begin{minipage}{.95\textwidth}
\begin{center}
\begin{tikzpicture}[scale=.5]
		\node [style=none] (0) at (-2, -1.25) {};
		\node [style=none] (1) at (-2, 1.25) {};
		\node [style=none] (2) at (-2.5, -1.25) {$1f^\ast$};
		\node [style=none] (3) at (-2.5, 1.25) {$m$};
		\node [style=none] (4) at (2.5, -1.25) {$m$};
		\node [style=none] (5) at (2.5, 1.25) {$f^\ast$};
		\node [style=none] (6) at (2, -1.25) {};
		\node [style=none] (7) at (2, 1.25) {};

		\draw [in=180, out=0] (1.center) to (6.center);
		\draw [in=-180, out=0] (0.center) to (7.center);
\end{tikzpicture}
\hspace{1em}\raisebox{2em}{,}\hspace{2em}
\begin{tikzpicture}[scale=.5]
		\node [style=none] (0) at (-2, -1.25) {};
		\node [style=none] (1) at (-2, 1.25) {};
		\node [style=none] (2) at (-2.5, -1.25) {$m$};
		\node [style=none] (3) at (-2.5, 1.25) {$f_\ast$};
		\node [style=none] (4) at (2.5, -1.25) {$f_\ast 1$};
		\node [style=none] (5) at (2.5, 1.25) {$m$};
		\node [style=none] (6) at (2, -1.25) {};
		\node [style=none] (7) at (2, 1.25) {};

		\draw [in=180, out=0] (1.center) to (6.center);
		\draw [in=-180, out=0] (0.center) to (7.center);
\end{tikzpicture}
\end{center}
\end{minipage}
These are defined by
\begin{center}
\begin{tikzpicture}[scale=.5]
		\node [style=none] (0) at (-3.75, 1.25) {};
		\node [style=none] (1) at (-3.75, 0) {};
		\node [style=none] (2) at (-3.75, -1.25) {};
		\node [style=none] (3) at (-4.5, 1.25) {$m$};
		\node [style=none] (4) at (-4.5, 0) {$1f$};
		\node [style=none] (5) at (-4.5, -1.25) {$u$};
		\node [style=none] (6) at (-2.75, -1.25) {};
		\node [style=fixed size node] (7) at (-0.75, 0.5) {$\lambda$};
		\node [style=fixed size node] (8) at (0.75, 0.5) {$\rho$};
		\node [style=none] (9) at (0, -1.25) {};
		\node [style=none] (10) at (3.75, 1.25) {};
		\node [style=none] (11) at (3.75, 0) {};
		\node [style=none] (12) at (3.75, -1.25) {};
		\node [style=none] (13) at (4.5, 1.25) {$m$};
		\node [style=none] (14) at (4.5, 0) {$f1$};
		\node [style=none] (15) at (4.5, -1.25) {$u$};
		\node [style=none] (16) at (2.75, -1.25) {};
		\node [style=none] (17) at (-1.25, -1.5) {$1f$};
		\node [style=none] (18) at (-2.75, -1.75) {$u1$};
		\node [style=none] (19) at (-1.75, 0) {$u1$};
		\node [style=none] (20) at (0, -0.75) {$f$};
		\node [style=none] (21) at (1.25, -1.5) {$f1$};
		\node [style=none] (22) at (2.75, -1.75) {$1u$};
		\node [style=none] (23) at (1.75, 0) {$1u$};
		\node [style=none] (24) at (-6.5, 0) {$u_f =$};
		
		\draw [in=180, out=0] (2.center) to (6.center);
		\draw [in=-120, out=0, looseness=0.75] (6.center) to (7);
		\draw [in=0, out=135] (7) to (0.center);
		\draw [in=-180, out=0] (1.center) to (9.center);
		\draw [in=0, out=-180] (11.center) to (9.center);
		\draw [in=360, out=180] (12.center) to (16.center);
		\draw [in=-75, out=180, looseness=0.75] (16.center) to (8);
		\draw [in=180, out=60] (8) to (10.center);
\end{tikzpicture}
\hspace{1em}\raisebox{3em}{,}\hspace{2em}
\begin{tikzpicture}[scale=.5]
		\node [style=none] (0) at (-1.75, 1.25) {};
		\node [style=none] (1) at (-1.75, 0) {};
		\node [style=none] (2) at (-1.75, -1.25) {};
		\node [style=none] (3) at (-2.5, 1.25) {$m$};
		\node [style=none] (4) at (-2.5, 0) {$1m$};
		\node [style=none] (5) at (-2.5, -1.25) {$1f1$};
		\node [style=none] (6) at (2.5, 1.25) {$m$};
		\node [style=none] (7) at (2.5, 0) {$m1$};
		\node [style=none] (8) at (2.5, -1.25) {$1f1$};
		\node [style=none] (9) at (1.75, 1.25) {};
		\node [style=none] (10) at (1.75, 0) {};
		\node [style=none] (11) at (1.75, -1.25) {};
		\node [style=fixed size node] (12) at (0, 0.5) {$\alpha$};
		\node [style=none] (13) at (-4.5, 0) {$m_f =$};

		\draw (2.center) to (11.center);
		\draw [in=120, out=0] (0.center) to (12);
		\draw [in=180, out=-60] (12) to (10.center);
		\draw [in=60, out=-180] (9.center) to (12);
		\draw [in=0, out=-120] (12) to (1.center);
\end{tikzpicture}

\end{center}
while the crossings \eqref{m crossing} are respectively given by the two 2-cells
\begin{center}
\begin{tikzpicture}[scale=.5]
		\node [style=none] (2) at (3.5, 0) {$1f$};
		\node [style=none] (3) at (2.75, 0) {};
		\node [style=none] (4) at (3.5, 1.25) {$m$};
		\node [style=none] (5) at (2.75, 1.25) {};
		\node [style=none] (6) at (3.5, -1.25) {$m$};
		\node [style=none] (7) at (1.75, -1.25) {};
		\node [style=none] (8) at (-1, -1.25) {};
		\node [style=none] (10) at (-1, -1.25) {};
		\node [style=none] (11) at (-2.75, -1.25) {};
		\node [style=none] (12) at (-2.75, 0) {};
		\node [style=fixed size node] (14) at (-1, 0.75) {$\alpha$};
		\node [style=none] (15) at (-2.75, 1.25) {};
		\node [style=none] (16) at (-3.5, 1.25) {$m$};
		\node [style=none] (17) at (-3.5, -1.25) {$11f$};
		\node [style=none] (19) at (2.75, -1.25) {};
		\node [style=none] (20) at (-3.5, 0) {$1m$};
		\node [style=none] (21) at (1.75, -1.75) {$m1$};
		\node [style=none] (22) at (0.25, 0.25) {$m1$};

		\draw [in=0, out=180] (3.center) to (8.center);
		\draw [in=0, out=180] (10.center) to (11.center);
		\draw [in=150, out=0] (15.center) to (14);
		\draw [in=-180, out=30] (14) to (5.center);
		\draw [in=-45, out=180, looseness=0.75] (7.center) to (14);
		\draw [in=0, out=-135, looseness=0.75] (14) to (12.center);
		\draw [in=0, out=180] (19.center) to (7.center);
\end{tikzpicture}
\hspace{2em}\raisebox{3em}{and}\hspace{2em}
\begin{tikzpicture}[scale=.5]
		\node [style=none] (2) at (-3.5, 0) {$f1$};
		\node [style=none] (3) at (-2.75, 0) {};
		\node [style=none] (4) at (-3.5, 1.25) {$m$};
		\node [style=none] (5) at (-2.75, 1.25) {};
		\node [style=none] (6) at (-3.5, -1.25) {$m$};
		\node [style=none] (7) at (-1.75, -1.25) {};
		\node [style=none] (8) at (1, -1.25) {};
		\node [style=none] (10) at (1, -1.25) {};
		\node [style=none] (11) at (2.75, -1.25) {};
		\node [style=none] (12) at (2.75, 0) {};
		\node [style=fixed size node] (14) at (1, 0.75) {$\alpha$};
		\node [style=none] (15) at (2.75, 1.25) {};
		\node [style=none] (16) at (3.5, 1.25) {$m$};
		\node [style=none] (17) at (3.5, -1.25) {$f11$};
		\node [style=none] (19) at (-2.75, -1.25) {};
		\node [style=none] (20) at (3.5, 0) {$m1$};
		\node [style=none] (21) at (-1.75, -1.75) {$1m$};
		\node [style=none] (22) at (-0.25, 0.25) {$1m$};

		\draw [in=180, out=0] (3.center) to (8.center);
		\draw [in=-180, out=0] (10.center) to (11.center);
		\draw [in=-330, out=180] (15.center) to (14);
		\draw [in=360, out=150] (14) to (5.center);
		\draw [in=225, out=0, looseness=0.75] (7.center) to (14);
		\draw [in=-180, out=-45, looseness=0.75] (14) to (12.center);
		\draw [in=180, out=0] (19.center) to (7.center);
\end{tikzpicture}
\end{center}

\begin{defn}\label{def Vpsnat} A $\cat V$\emph{-pseudonatural transformation} $t\colon F\Rightarrow G$ between $\cat V$-pseudofunctors $F,G\colon\cat C\to\cat D$ consists of, for every pair of objects $a,b$ in $\cat B$
\begin{itemize}
\item 1-cells in $\cat V$
\[t_a\colon \mathbb 1\longrightarrow\cat D(Fa,Ga)\]
\item 2-cells in $\cat V$
\begin{center}
\begin{tikzpicture}[scale=.5]
		\node [style=none] (0) at (-3, -1.5) {};
		\node [style=none] (1) at (-3, 1.5) {};
		\node [style=fixed size node] (2) at (0, 0) {$t_{ab}$};
		\node [style=none] (3) at (3.25, 1.5) {};
		\node [style=none] (4) at (3.25, -1.5) {};
		\node [style=none] (5) at (3.75, -1.5) {$F$};
		\node [style=none] (6) at (3.75, 1.5) {$t_\ast$};
		\node [style=none] (7) at (-3.5, -1.5) {$G$};
		\node [style=none] (8) at (-3.5, 1.5) {$t^\ast$};
		\node [style=none] (9) at (0, -2) {{$\mathcal C(a,b)$}};
		\node [style=none] (10) at (-2.5, 0) {$\mathcal D(Ga,Gb)$};
		\node [style=none] (11) at (2.5, 0) {$\mathcal D(Fa,Fb)$};
		\node [style=none] (12) at (0, 2) {$\mathcal D(Fa,Gb)$};

		\draw [in=120, out=0] (1.center) to (2);
		\draw [in=-180, out=60] (2) to (3.center);
		\draw [in=-60, out=180] (4.center) to (2);
		\draw [in=0, out=-120] (2) to (0.center);
\end{tikzpicture}
\end{center}
\end{itemize}
subject to the following axioms of \emph{unitality} and \emph{functoriality}:
\vspace{1em}

\noindent
\begin{minipage}{.1\textwidth}
\begin{equation}\label{psnat unitality}
\phantom{c}
\tag{TU}
\end{equation}
\end{minipage}
\begin{minipage}{.9\textwidth}
\begin{center}
\begin{tikzpicture}[scale=.5]
		\node [style=none] (0) at (-7.5, 1.5) {};
		\node [style=none] (1) at (-7.5, -1.25) {};
		\node [style=none] (2) at (-6.5, -1.25) {};
		\node [style=none] (3) at (-2, 1.5) {};
		\node [style=none] (4) at (-3, -1.25) {};
		\node [style=none] (5) at (-2, -1.25) {};
		\node [style=fixed size node] (6) at (-4.75, 0.25) {$t_{aa}$};
		\node [style=none] (7) at (-4.75, -2) {};
		\node [style=none] (8) at (-1.5, 1.5) {$t_\ast$};
		\node [style=none] (9) at (-1.5, -1.25) {$u$};
		\node [style=none] (10) at (-8, 1.5) {$t^\ast$};
		\node [style=none] (11) at (-8, -1.25) {$u$};
		\node [style=none] (12) at (0, 0) {$=$};
		\node [style=none] (13) at (2, 1.5) {};
		\node [style=none] (14) at (2, -1.25) {};
		\node [style=none] (16) at (6.5, 1.5) {};
		\node [style=none] (18) at (6.5, -1.25) {};
		\node [style=none] (21) at (7, 1.5) {$t_\ast$};
		\node [style=none] (22) at (7, -1.25) {$u$};
		\node [style=none] (23) at (1.5, 1.5) {$t^\ast$};
		\node [style=none] (24) at (1.5, -1.25) {$u$};
		\node [style=squared] (25) at (4.25, 0) {$u_t$};
		\node [style=none] (26) at (-6.25, -0.5) {$G$};
		\node [style=none] (27) at (-3.25, -0.5) {$F$};
		\node [style=none] (28) at (-4.75, -2.25) {$u$};

		\draw [in=105, out=0, looseness=0.75] (0.center) to (6);
		\draw [in=360, out=180] (2.center) to (1.center);
		\draw [in=-120, out=75] (2.center) to (6);
		\draw [in=-75, out=180] (7.center) to (2.center);
		\draw [in=-120, out=0] (7.center) to (4.center);
		\draw [in=180, out=0] (4.center) to (5.center);
		\draw [in=105, out=-60] (6) to (4.center);
		\draw [in=-180, out=75, looseness=0.75] (6) to (3.center);
		\draw [in=120, out=0] (13.center) to (25);
		\draw [in=0, out=-120] (25) to (14.center);
		\draw [in=-180, out=-60] (25) to (18.center);
		\draw [in=-180, out=60] (25) to (16.center);
\end{tikzpicture}
\end{center}
\end{minipage}

\noindent
\begin{minipage}{.1\textwidth}
\begin{equation}\label{psnat functoriality}
\phantom{c}
\tag{TF}
\end{equation}
\end{minipage}
\begin{minipage}{.9\textwidth}
\begin{center}
\begin{tikzpicture}[scale=.5]
		\node [style=none] (0) at (0, 0) {$=$};
		\node [style=none] (1) at (-11.5, 0) {$m$};
		\node [style=none] (2) at (-11.5, 2) {$t^\ast$};
		\node [style=none] (3) at (-11.5, -2) {$GG$};
		\node [style=none] (4) at (-11, 2) {};
		\node [style=none] (5) at (-11, 0) {};
		\node [style=none] (6) at (-11, -2) {};
		\node [style=none] (10) at (1.5, 2) {$t^\ast$};
		\node [style=none] (11) at (1.5, 0) {$m$};
		\node [style=none] (12) at (1.5, -2) {$GG$};
		\node [style=none] (13) at (2, 2) {};
		\node [style=none] (14) at (2, 0) {};
		\node [style=none] (15) at (2, -2) {};
		\node [style=none] (16) at (-2, 2) {};
		\node [style=none] (17) at (-2, 0) {};
		\node [style=none] (18) at (-2, -2) {};
		\node [style=none] (19) at (11.5, 2) {$t_\ast$};
		\node [style=none] (20) at (11.5, 0) {$m$};
		\node [style=none] (21) at (11.5, -2) {$FF$};
		\node [style=none] (22) at (11, 2) {};
		\node [style=none] (23) at (11, 0) {};
		\node [style=none] (24) at (11, -2) {};
		\node [style=none] (25) at (-6.5, -2) {};
		\node [style=fixed size node] (26) at (-6.5, 1) {$t_{ac}$};
		\node [style=fixed size node] (27) at (4.5, -1) {$1t_{ab}$};
		\node [style=fixed size node] (28) at (8.5, -1) {$t_{bc}1$};
		\node [style=squared] (29) at (6.5, 1) {$m_t$};
		\node [style=none] (30) at (5.25, 2) {};
		\node [style=none] (31) at (7.75, 2) {};
		\node [style=none] (32) at (4.25, 0.25) {$1t^\ast$};
		\node [style=none] (33) at (5.25, 2.25) {$m$};
		\node [style=none] (34) at (7.75, 2.25) {$m$};
		\node [style=none] (35) at (5.75, -0.25) {$1t_\ast$};
		\node [style=none] (36) at (7.25, -0.25) {$t^\ast 1$};
		\node [style=none] (37) at (8.75, 0.25) {$t_\ast 1$};
		\node [style=none] (38) at (6.5, -2.25) {$GF$};
		\node [style=none] (39) at (-1.5, 2) {$t_\ast$};
		\node [style=none] (40) at (-1.5, 0) {$m$};
		\node [style=none] (41) at (-1.5, -2) {$FF$};
		\node [style=none] (42) at (-8, -0.25) {$G$};
		\node [style=none] (43) at (-5, -0.25) {$F$};

		\draw [in=135, out=0, looseness=0.75] (4.center) to (26);
		\draw [in=-180, out=0] (5.center) to (25.center);
		\draw [in=180, out=0] (25.center) to (17.center);
		\draw [in=-60, out=-180, looseness=0.75] (18.center) to (26);
		\draw [in=0, out=-120, looseness=0.75] (26) to (6.center);
		\draw [in=-180, out=45, looseness=0.75] (26) to (16.center);
		\draw [in=120, out=0, looseness=0.75] (13.center) to (27);
		\draw [in=60, out=-120] (29) to (27);
		\draw [in=120, out=-60] (29) to (28);
		\draw [in=60, out=-180, looseness=0.75] (22.center) to (28);
		\draw [in=180, out=-60] (28) to (24.center);
		\draw [in=255, out=-75] (27) to (28);
		\draw [in=0, out=-120] (27) to (15.center);
		\draw [in=-180, out=0] (14.center) to (30.center);
		\draw [in=120, out=0] (30.center) to (29);
		\draw [in=-180, out=60] (29) to (31.center);
		\draw [in=-180, out=0] (31.center) to (23.center);
\end{tikzpicture}

\end{center}
\end{minipage}
\end{defn}

\subsubsection{$\cat V$-modifications}

\begin{defn}
A $\cat V$-modification $M\colon t\Rrightarrow s$ between $\cat V$-pseudonatural transformations is a family of 2-cells $M_a\colon t_a\Rightarrow s_a$ in $\cat V$ such that
\begin{center}
\begin{tikzpicture}[scale=.5]
		\node [style=none] (0) at (-8, 1.25) {$t^\ast$};
		\node [style=none] (1) at (-8, -1.25) {G};
		\node [style=none] (2) at (-7.5, 1.25) {};
		\node [style=none] (3) at (-7.5, -1.25) {};
		\node [style=none] (4) at (-2, 1.25) {};
		\node [style=none] (5) at (-2, -1.25) {};
		\node [style=none] (6) at (-1.5, 1.25) {$s_\ast$};
		\node [style=none] (7) at (-1.5, -1.25) {$F$};
		\node [style=none] (16) at (0, 0) {$=$};
		\node [style=fixed size node] (17) at (-6.25, 0) {$t_{ab}$};
		\node [style=fixed size node] (18) at (-3.25, 1.25) {${M_b}_\ast$};
		\node [style=none] (19) at (-4.5, 1.25) {};
		\node [style=none] (24) at (-4.5, -1.25) {};
		\node [style=none] (25) at (-5, 1.5) {$t_\ast$};
		\node [style=none] (26) at (8, -1.25) {$F.$};
		\node [style=none] (27) at (8, 1.25) {$s_\ast$};
		\node [style=none] (28) at (7.5, -1.25) {};
		\node [style=none] (29) at (7.5, 1.25) {};
		\node [style=none] (30) at (2, -1.25) {};
		\node [style=none] (31) at (2, 1.25) {};
		\node [style=none] (32) at (1.5, -1.25) {$G$};
		\node [style=none] (33) at (1.5, 1.25) {$t^\ast$};
		\node [style=fixed size node] (34) at (6.25, 0) {$s_{ab}$};
		\node [style=fixed size node] (35) at (3.25, 1.25) {$M_a^\ast$};
		\node [style=none] (36) at (4.5, -1.25) {};
		\node [style=none] (37) at (4.5, 1.25) {};
		\node [style=none] (38) at (5, 1.5) {$s^\ast$};

		\draw [in=360, out=180] (4.center) to (18);
		\draw [in=360, out=180] (18) to (19.center);
		\draw [in=60, out=-180] (19.center) to (17);
		\draw [in=0, out=-105] (17) to (3.center);
		\draw [in=0, out=105] (17) to (2.center);
		\draw [in=360, out=180] (5.center) to (24.center);
		\draw [in=-60, out=-180] (24.center) to (17);
		\draw [in=-120, out=0] (36.center) to (34);
		\draw [in=-180, out=75] (34) to (29.center);
		\draw [in=-180, out=-75] (34) to (28.center);
		\draw [in=120, out=0] (37.center) to (34);
		\draw (31.center) to (35);
		\draw (35) to (37.center);
		\draw (30.center) to (36.center);
\end{tikzpicture}
\end{center}
The definition of $(-)^\ast$ and $(-)_\ast$ applied to a 2-cell is simply given by tensoring the 2-cell with 1 (to the right and to the left respectively) and then whiskering with $m$.
\end{defn}

\subsection{The opposite $\cat V$-bicategory}\label{section opposite}

In this section we assume $\cat V$ to be a monoidal bicategory. Moreover, without loss of generality, we will deal with its semi-strict replacement, which is a semi-strict braided monoidal bicategory (Definition \ref{def braided gray mon}). Under this assumption, we define the structure of the opposite $\cat V$-bicategory of a $\cat V$-bicategory $\cat C$, and we prove that it defines indeed a $\cat V$-bicategory, in the sense that this new structure will itself satisfy the coherence axioms \eqref{IC Vbicat} and \eqref{AC Vbicat}.

\begin{defn}\label{def op}
    Let $\cat C$ be a $\cat V$-bicategory, for $\cat V$ a braided monoidal bicategory. Let $\cat C\op$ be the data of
    \begin{itemize}
        \item The class of objects $({\cat C\op})_0\coloneqq\cat C_0$ of $\cat C$.
        \item For every pair of objects $a,b$, the object $\cat C\op(a,b)\coloneqq\cat C(b,a)$ in $\cat V$, together with, for every triple of objects $c,d,e$
        \begin{align*}
    \underline{m}\colon\cat C\op(d,e)\otimes\cat C\op(c,d)\longrightarrow\cat C\op(c,e)
    \end{align*}
    defined as the composite
    \begin{align*}
    \cat C(e,d)\otimes\cat C(d,c)\overset{\beta}{\longrightarrow}\cat C(d,c)\otimes\cat C(e,d)\overset{m}{\longrightarrow}\cat C(e,c),
\end{align*}
and the unit \[\underline u_c=u_c\colon \mathbb 1\longrightarrow\cat C(c,c)=\cat C\op(c,c).\]
        \item  The associator
\hspace*{-2cm}
    \begin{center}


\end{center}
This concludes the proof of the theorem.
\end{proof}

\begin{rmk}\label{oponfunctors}
If $F\colon \cat C\to\cat D$ is a $\cat V$-pseudofunctor, for $\cat V$ a braided monoidal bicategory, then the opposite construction is functorial in the sense that we can induce a $\cat V$-pseudofunctor $F\op\colon\cat C\to \cat D$, defined by the same assignment on objects, while defining each hom-morphism just as $F\op_{a,b}\coloneqq F_{b,a}\colon \cat C\op(a,b)\to\cat D\op(Fa,Fb)$. The higher structure of $\cat V$-pseudofunctor is also trivially deduced from the one of $F$, as well as the $\cat V$-pseudofunctor axioms.
\end{rmk}

\subsection{The tensor product of $\cat V$-bicategories}\label{section tensor product}

The second construction that we present is this article is that of the tensor product of $\cat V$-bicategories. The definition of the structure is subsequently given.

\begin{defn}\label{def tensor product}
Le $\cat{V}$ be a braided monoidal bicategory (which, as before, is assumed to be semi-strict without loss of generality) and $\cat B,\cat C$ be two $\cat V$-bicategories. Their product $\cat B\otimes\cat C$ is defined by the following data
\begin{itemize}
\item As objects the class product $\cat B_0\times\cat C_0$.
\item For every pair of objects $(b,c),(b',c')$, the hom-object $\cat B\otimes\cat C((b,c),(b',c'))$ is given by the monoidal product $\cat B(b,b')\otimes_\cat V\cat C(c,c')$. The composition is
\begin{center}
\begin{tikzcd}
	{\underline{m}\colon\mathcal B(b',b'')\mathcal C(c',c'')\mathcal B(b,b')\mathcal C(c,c')} & {\mathcal B(b',b'')\mathcal B(b,b')\mathcal C(c',c'')\mathcal C(c,c')} & {\mathcal B(b,b'')\mathcal C(c,c'')}
	\arrow["{1\beta 1}", from=1-1, to=1-2]
	\arrow["mm", from=1-2, to=1-3]
\end{tikzcd}
\end{center}
and the unit of an object $(b,c)$ is
\end{itemize}
\[\underline u\colon \mathbb 1=\mathbb{1}\otimes \mathbb 1\overset{u\otimes u}{\longrightarrow}\cat B(b,b)\otimes\cat C(c,c).\]
\begin{itemize}
\item The associator and the unitors are given by the following string diagrams:
\end{itemize} 
\begin{center}
\begin{tikzpicture}[scale=.4, xscale=1.1]
		\node [none] (0) at (-6.225, 4.5) {$mm$};
		\node [none] (1) at (-6.225, 1.5) {$1\beta 1$};
		\node [none] (2) at (-6.225, -1.5) {$mm\underline1$};
		\node [none] (3) at (-6.225, -4.5) {$1\beta1\underline 1$};
		\node [none] (4) at (-5.725, 4.5) {};
		\node [none] (5) at (-5.725, 1.5) {};
		\node [none] (6) at (-5.725, -1.5) {};
		\node [none] (7) at (-5.725, -4.5) {};
		\node [fixed size node] (8) at (0.025, 3) {$\alpha\alpha$};
		\node [fixed size node] (9) at (-1.975, -2) {\adjustbox{valign=m, max width=.6cm, max height=.6cm}{$\underline 1S1$}};
		\node [fixed size node] (10) at (3.025, -2) {\adjustbox{valign=m, max width=.6cm, max height=.6cm}{$1R^{-1}\underline 1$}};
		\node [none] (11) at (8.275, -4.5) {$\underline 11\beta 1$};
		\node [none] (12) at (7.775, -4.5) {};
		\node [none] (13) at (8.275, 1.5) {$1\beta 1$};
		\node [none] (14) at (8.275, 4.5) {$mm$};
		\node [none] (15) at (7.775, 1.5) {};
		\node [none] (16) at (7.775, 4.5) {};
		\node [none] (17) at (7.775, -1.5) {};
		\node [none] (18) at (8.275, -1.5) {$\underline 1mm$};
		\node [none] (19) at (0.525, 0.75) {$\underline 1\beta \underline 1$};
		\node [none] (20) at (2.15, -3.5) {$1\beta 1\underline 1$};
		\node [none] (21) at (-1.15, -3.5) {$\underline 1 1\beta 1$};
		\node [none] (22) at (2.275, 1.75) {$1m1m$};
		\node [none] (23) at (-3.225, -2.5) {$\underline 1\beta 1$};
		\node [none] (24) at (-1.975, 1.75) {$m1m1$};
		\node [none] (25) at (4.775, -2.5) {$1\beta \underline 1$};
		\node [none] (26) at (-9, 0) {$\underline \alpha$};
		\node [none] (27) at (-8.25, 0) {$=$};
		
		\draw [in=135, out=0, looseness=0.75] (4.center) to (8);
		\draw [in=-135, out=0, looseness=0.50] (6.center) to (8);
		\draw [in=0, out=180, looseness=0.75] (9) to (5.center);
		\draw [in=-135, out=0] (7.center) to (10);
		\draw [in=-45, out=180] (12.center) to (9);
		\draw [in=105, out=75] (9) to (10);
		\draw [in=180, out=0] (10) to (15.center);
		\draw [in=-45, out=180, looseness=0.50] (17.center) to (8);
		\draw [in=180, out=45, looseness=0.75] (8) to (16.center);
\end{tikzpicture}
\end{center}
from $\underline m\circ \underline m \underline 1$ to $\underline m\circ \underline 1\underline m$, and
\begin{center}
\begin{tikzpicture}[scale=.5]
		\node [style=none] (2) at (-3, 2.5) {$mm$};
		\node [style=none] (3) at (-3, 0) {$1\beta 1$};
		\node [style=none] (4) at (-3, -1.75) { $uu\underline 1$};
		\node [style=none] (7) at (-2.25, 2.5) {};
		\node [style=none] (8) at (-2.25, 0) {};
		\node [style=none] (9) at (-2.25, -1.75) {};
		\node [style=fixed size node] (17) at (1.5, 1.5) {$\lambda\lambda$};
		\node [style=none] (34) at (1.25, 0.25) {$u1u1$};
		\node [style=fixed size node] (35) at (3, -1) {};
		\node [style=none] (36) at (1.75, -1.25) {$1\beta 1$};
		\node [style=none] (37) at (-5.75, 0) {$\underline \lambda$};
		\node [style=none] (38) at (-4.75, 0) {$=$};
		
		\draw [in=-120, out=0, looseness=0.50] (9.center) to (17);
		\draw [in=105, out=0, looseness=0.50] (7.center) to (17);
		\draw [in=-180, out=0] (8.center) to (35);
\end{tikzpicture}
\hspace{1em}\raisebox{3em}{\text{,}}\hspace{2em}
\begin{tikzpicture}[scale=.5]
		\node [style=none] (2) at (-3, 2.5) {$mm$};
		\node [style=none] (3) at (-3, 0) {$1\beta 1$};
		\node [style=none] (4) at (-3, -1.75) { $\underline 1uu$};
		\node [style=none] (7) at (-2.25, 2.5) {};
		\node [style=none] (8) at (-2.25, 0) {};
		\node [style=none] (9) at (-2.25, -1.75) {};
		\node [style=fixed size node] (17) at (1.5, 1.5) {$\rho\rho$};
		\node [style=none] (34) at (1.25, 0.25) {$1u1u$};
		\node [style=fixed size node] (35) at (3, -1) {};
		\node [style=none] (36) at (1.75, -1.25) {$1\beta 1$};
		\node [style=none] (37) at (-5.75, 0) {$\underline \rho$};
		\node [style=none] (38) at (-4.75, 0) {$=$};
		
		\draw [in=-120, out=0, looseness=0.50] (9.center) to (17);
		\draw [in=105, out=0, looseness=0.50] (7.center) to (17);
		\draw [in=-180, out=0] (8.center) to (35);
\end{tikzpicture}
\end{center}
\end{defn}

\begin{rmk}
In the above definition, the notation $\underline 1$ stands for the identity morphism of a hom-object in $\cat B\otimes\cat C$. Therefore it is nothing but a tensored pair of identities (11) for the hom-objects of $\cat B$ and $\cat C$. It is worth taking a moment to translate the strings defining the structural 2-cells for the tensor product of two $\cat V$-bicategories into pasting diagrams. This is a good opportunity to make the presence of the interchangers explicit:
\begin{itemize}
\item[$\underline{\alpha}$:] The associator is the following 2-cell, where we can immediately recognize $\alpha\alpha$, as well as the two crossings given by the structural morphisms $\beta_m$ and its inverse.
\end{itemize}
\begin{center}
\begin{tikzcd}[column sep=-1.5em]
	{\mathcal B(c,d)\mathcal C(z,w)\mathcal B(b,c)\mathcal C(y,z)\mathcal B(a,b)\mathcal C(x,y)} && {\mathcal B(c,d)\mathcal C(z,w)\mathcal B(a,c)\mathcal C(x,z)} \\
	& {\mathcal B(c,d)\mathcal C(z,w)\mathcal B(b,c)\mathcal B(a,b)\mathcal C(y,z)\mathcal C(x,y)} \\
	{\mathcal B(c,d)\mathcal B(b,c)\mathcal C(z,w)\mathcal C(y,z)\mathcal B(a,b)\mathcal C(x,y)} && {\mathcal B(c,d)\mathcal B(a,c)\mathcal C(z,w)\mathcal C(x,z)} \\
	& {\mathcal B(c,d)\mathcal B(b,c)\mathcal B(a,b)\mathcal C(z,w)\mathcal C(y,z)\mathcal C(x,y)} \\
	{\mathcal B(b,d)\mathcal C(y,w)\mathcal B(a,b)\mathcal C(x,y)} && {\mathcal B(a,d)\mathcal C(x,w)} \\
	& {\mathcal B(b,d)\mathcal B(a,b)\mathcal C(y,w)\mathcal C(x,y)}
	\arrow["{\underline 11\beta 1}", from=1-1, to=2-2]
	\arrow["{1\beta 1\underline 1}"', from=1-1, to=3-1]
	\arrow["{1\beta 1}", from=1-3, to=3-3]
	\arrow["{\underline 1mm}", from=2-2, to=1-3]
	\arrow["{1\beta \underline 1}", from=2-2, to=4-2]
	\arrow["{\cong (\ast)}"{description}, draw=none, from=3-1, to=2-2]
	\arrow["{\underline 1\beta 1}", from=3-1, to=4-2]
	\arrow["{mm\underline 1}"', from=3-1, to=5-1]
	\arrow["mm", from=3-3, to=5-3]
	\arrow["{\Arrowur\beta_m}"{description}, draw=none, from=4-2, to=1-3]
	\arrow["1m1m", from=4-2, to=3-3]
	\arrow["m1m1", from=4-2, to=6-2]
	\arrow["{\Arrowur\beta_m^{-1}}"{description}, draw=none, from=5-1, to=4-2]
	\arrow["{1\beta 1}"', from=5-1, to=6-2]
	\arrow["{\Arrowur\alpha\alpha}"{description}, draw=none, from=6-2, to=3-3]
	\arrow["mm"', from=6-2, to=5-3]
\end{tikzcd}
\end{center}
The isomorphism $(\ast)$ is in fact given by the following (we allow ourselves to rename each hom-object with a single letter, since no multiplication is involved now):
\begin{center}
\begin{tikzcd}
	XAYBZC && XAYZBC \\
	& XYAZBC \\
	XYABZC && XYZABC
	\arrow["{\underline 11\beta 1}", from=1-1, to=1-3]
	\arrow["{\adorn{\cong}{1\Sigma_{\beta,\beta} 1}}"{description}, draw=none, from=1-1, to=2-2]
	\arrow["{1\beta 1\underline 1}"', from=1-1, to=3-1]
	\arrow["{1\beta 1\underline 1}"', from=1-3, to=2-2]
	\arrow[""{name=0, anchor=center, inner sep=0}, "{1\beta \underline 1}", from=1-3, to=3-3]
	\arrow["{\underline 1\beta\underline 1}", from=2-2, to=3-3]
	\arrow["{\underline 11\beta 1}", from=3-1, to=2-2]
	\arrow[""{name=1, anchor=center, inner sep=0}, "{\underline 1\beta 1}"', from=3-1, to=3-3]
	\arrow["{\adorn{\Rightarrow}{1R^{-1}\underline 1}}"{description}, draw=none, from=2-2, to=0]
	\arrow["{\Arrowu \underline 1S1}"{description}, draw=none, from=1, to=2-2]
\end{tikzcd}
\end{center}
where we see the two cells $\underline 1S1$ and $1R^{-1}\underline 1$, as well as the crossing in between, which is provided by the interchanger at $\beta$.
\begin{itemize}
\item[$\underline \lambda,\underline\rho$:] The unitors, for example the left one (the right one is similar) is given by the following 2-cell
\[\begin{tikzcd}
	{\mathbb 1\mathcal B(b,b')\mathcal C(c,c')} & {\mathbb 1\mathcal B(b,b')\mathcal C(c,c')} \\
	{\mathbb 1\mathbb 1\mathcal B(b,b')\mathcal C(c,c')} & {\mathbb 1\mathcal B(b,b')\mathbb 1\mathcal C(c,c')} & {\mathcal B(b,b')\mathcal C(c,c')} \\
	{\mathcal B(b',b')\mathcal C(c',c')\mathcal B(b,b')\mathcal C(c,c')} & {\mathcal B(b',b')\mathcal B(b,b')\mathcal C(c',c')\mathcal C(c,c')}
	\arrow[equals, from=1-1, to=1-2]
	\arrow[equals, from=2-1, to=1-1]
	\arrow["{1\beta 1}", from=2-1, to=2-2]
	\arrow["uu1"', from=2-1, to=3-1]
	\arrow[equals, from=2-2, to=1-2]
	\arrow[""{name=0, anchor=center, inner sep=0}, equals, from=2-2, to=2-3]
	\arrow["u1u1"', from=2-2, to=3-2]
	\arrow["{\Arrowur\beta_u^{-1}}"{description}, draw=none, from=3-1, to=2-2]
	\arrow["{1\beta 1}"', from=3-1, to=3-2]
	\arrow["mm"', shift right=2, shorten <=17pt, from=3-2, to=2-3]
	\arrow["{\Arrowur\lambda\lambda}"{description}, shift right=2, draw=none, from=3-2, to=0]
\end{tikzcd}\]
The commutative square on top (the identical unlabeled 2-cell at the end of the string diagrammatic representation) is commutative by \eqref{ax1 semi-strict braided}.
\end{itemize}
\end{rmk}

\begin{thm}\label{thmBtensorC}
The data of Definition \ref{def tensor product} define $\cat B\otimes\cat C$ as a $\cat V$-bicategory.
\end{thm}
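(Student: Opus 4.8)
The plan is to proceed exactly as in the proof of Theorem~\ref{thmCop}: we verify the identity coherence axiom \eqref{IC Vbicat} and the associativity coherence axiom \eqref{AC Vbicat} for the structural $2$-cells $\underline\alpha,\underline\lambda,\underline\rho$ of Definition~\ref{def tensor product} by a string-diagrammatic computation. The guiding principle is that the entire tensor-product structure is assembled out of the structures of $\cat B$ and of $\cat C$ ``run in parallel'' --- visible in the appearance of $\alpha\alpha$, $\lambda\lambda$, $\rho\rho$ in the definitions of $\underline\alpha,\underline\lambda,\underline\rho$ --- together with braiding crossings and interchangers whose only role is to shuffle the $\cat B$-factors past the $\cat C$-factors into the order demanded by $mm$. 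Accordingly, in each case the strategy is: first expand every occurrence of $\underline m = mm\circ 1\beta 1$, of $\underline u = uu$, and of the structural cells (including the isomorphism $(\ast)$ built from $\underline 1 S1$, $1R^{-1}\underline 1$ and an interchanger); then use the naturality of $\beta$ \eqref{naturality beta}, the modification property of $R$ and $S$ \eqref{modification property R and S}, the interchanger axioms \eqref{axiom 1 interchanger}--\eqref{axiom 3 interchanger}, and the ``joining of identities'' of Remark~\ref{join identities} to slide the braiding components around until the $\alpha\alpha$ (resp.\ $\lambda\lambda,\rho\rho$) cells are arranged to form the left-hand side of the corresponding coherence axiom for $\cat B$ and for $\cat C$ separately; apply those axioms; and finally check that the residual $2$-cells, now built only from $R$, $S$, $\beta$ and interchangers, collapse to the required right-hand side by the braiding axioms \eqref{BA1}--\eqref{BA4} and the semi-strict axioms \eqref{ax1 semi-strict braided}--\eqref{ax4 semi-strict braided}.

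For the identity coherence \eqref{IC Vbicat}, after expanding $\underline\alpha$, $1\underline\lambda$, $\underline\rho 1$ and the definitions of $\underline m$ and $\underline u$, one slides the unit components $uu$ past the crossing coming from $\beta_m^{-1}$ and past the composite $\underline 1 S1$ followed by $1R^{-1}\underline 1$ using \eqref{naturality beta} and \eqref{modification property R and S}; this exposes at the top the left-hand side of \eqref{IC Vbicat} for $\cat B$ tensored with the one for $\cat C$, in the guise of $\alpha\alpha$, $1\lambda$, $\rho 1$. Applying those two axioms and cancelling a $\beta$ against a $\beta^{-1}$, the remaining braiding $2$-cell is the direct analogue of the cell \eqref{pezzo di IC op} appearing in the proof of Theorem~\ref{thmCop}, which is the identity by \eqref{ax1 semi-strict braided}, \eqref{ax2 semi-strict braided} and \eqref{ax3 semi-strict braided}; together with \eqref{axiom 1 interchanger} this finishes the identity coherence.

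The associativity coherence \eqref{AC Vbicat} is the laborious part. Expanding each of the three copies of $\underline\alpha$ on the two sides produces, besides three copies of $\alpha\alpha$, a large number of braiding cells: the crossings $\beta_m^{\pm 1}$, the cells $\underline 1 S1$, $1R^{-1}\underline 1$ together with their tensored variants, and the interchanger $2$-cells from the isomorphisms $(\ast)$. One first slides, by \eqref{naturality beta}, the $1$-cell components of $\beta$ sitting on top of the various $\alpha\alpha$'s and $\alpha^{\pm 1}$'s out of the way; then, using \eqref{modification property R and S} repeatedly for the composites built from $R^{\pm 1}$ and $S$, one slides the multiplications and braidings linking one $\alpha^{-1}\alpha^{-1}$-block to the next above them, exactly as in Theorem~\ref{thmCop}. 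This uncovers the left-hand side of \eqref{AC Vbicat} for $\cat B$ and for $\cat C$; after applying those, reorganising the leftover braiding cells by \eqref{BA1} and \eqref{BA2} (for the $R$ versus $1R$ and $S$ versus $S1$ instances) and \eqref{BA4} (for the $R$ versus $S$ instance with a unit present), together with Remark~\ref{join identities} and the interchanger axioms, reassembles precisely the right-hand side $\underline\alpha\circ\underline 1\underline\alpha$ precomposed with $\underline\alpha$.

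The main obstacle is the bookkeeping in this last step: six hom-objects coming from $\cat B$ and six from $\cat C$ get interleaved and then reshuffled by the braiding, and the correct order in which to apply \eqref{naturality beta}, the modification axioms for $R$ and $S$, and the braiding axioms \eqref{BA1}--\eqref{BA4} is delicate. It is here --- and essentially only here --- that we genuinely use $\cat V$ being a \emph{semi-strict} braided monoidal bicategory rather than merely a braided Gray monoid: the extra axioms \eqref{ax1 semi-strict braided}--\eqref{ax4 semi-strict braided} are what allow the unit-related braiding $2$-cells produced along the way to be discarded. The string diagram calculus of Section~\ref{sec string} is what makes this computation humanly manageable, and the argument runs in close parallel to, though is appreciably longer than, the one given for Theorem~\ref{thmCop}.
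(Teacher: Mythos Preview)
Your proposal is correct and follows essentially the same approach as the paper: expand $\underline\alpha,\underline\lambda,\underline\rho$, use naturality of $\beta$ and the modification property of $R,S$ to slide braiding cells until the instances of $\alpha\alpha$ (resp.\ $\lambda\lambda,\rho\rho$) assemble into the coherence axioms for $\cat B$ and $\cat C$ in parallel, then collapse the residual braiding $2$-cells via \eqref{BA1}, \eqref{BA2}, \eqref{BA4}, the joining of identities, and the semi-strict axioms. The paper treats \eqref{AC Vbicat} before \eqref{IC Vbicat} and is more explicit about the precise sequence of slides (in particular where the joining of identities is applied and which modification axiom justifies each slide), but the architecture of the argument is the same as yours.
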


\begin{proof}
After expanding the definition of $\underline{\alpha}$, as well as of $\underline m$, the left-hand side of \eqref{AC Vbicat}
\begin{center}
\begin{tikzpicture}[scale=.5]
		\node [none] (0) at (-5.75, -0.25) {$\underline m\underline 1$};
		\node [none] (1) at (-5.75, 2.5) {$\underline m$};
		\node [none] (2) at (-5.25, -2.75) {};
		\node [none] (3) at (-5.25, -0.25) {};
		\node [none] (4) at (-5.25, 2.5) {};
		\node [fixed size node] (5) at (-3.75, -1.75) {$\underline\alpha \underline1$};
		\node [fixed size node] (7) at (-1.25, 0.75) {$\underline \alpha$};
		\node [fixed size node] (8) at (1.5, -1.75) {$\underline 1\underline\alpha$};
		\node [none] (9) at (1.25, 2.5) {};
		\node [none] (12) at (3.75, -2.75) {};
		\node [none] (14) at (3.75, 2.5) {};
		\node [none] (15) at (3.75, -0.25) {};
		\node [none] (16) at (4.5, -2.75) {$\underline1\underline1\underline m$};
		\node [none] (18) at (4.25, 2.5) {$\underline m$};
		\node [none] (19) at (4.25, -0.25) {$\underline 1\underline m$};
		\node [none] (20) at (-6, -2.75) {$\underline m\underline 1\underline 1$};
		\node [none] (21) at (-2.75, 0) {$\underline m\underline1$};
		\node [none] (22) at (-1.25, -3.25) {$\underline1\underline m\underline1$};
		\node [none] (25) at (0.5, 0) {$\underline1\underline m$};
		
		\draw [in=120, out=0] (3.center) to (5);
		\draw (9.center) to (14.center);
		\draw [in=-120, out=0, looseness=0.75] (2.center) to (5);
		\draw [in=-135, out=-45, looseness=0.75] (5) to (8);
		\draw [in=75, out=-120] (7) to (5);
		\draw [in=135, out=0] (4.center) to (7);
		\draw [in=120, out=-45, looseness=1.25] (7) to (8);
		\draw [in=-180, out=-45] (8) to (12.center);
		\draw [in=-180, out=60, looseness=0.75] (7) to (9.center);
		\draw [in=-180, out=45] (8) to (15.center);
\end{tikzpicture}

\end{center}
looks like the following:
\begin{center}
\begin{tikzpicture}[scale=.4]
		\node [none] (0) at (-20, 7) {$mm$};
		\node [none] (1) at (-20, 4) {$1\beta 1$};
		\node [none] (2) at (-6.25, 7) {};
		\node [none] (3) at (-8.75, 4) {};
		\node [none] (4) at (-6.25, 1) {};
		\node [none] (5) at (-5.75, -3.5) {};
		\node [fixed size node, fill=gray!50] (6) at (-1.25, 4.75) {$\alpha\alpha$};
		\node [fixed size node] (7) at (-3.75, -1) {\adjustbox{valign=m, max width=.6cm, max height=.6cm}{$\underline 1S1$}};
		\node [fixed size node] (8) at (1.25, -1) {\adjustbox{valign=m, max width=.6cm, max height=.6cm}{$1R^{-1}\underline 1$}};
		\node [none] (9) at (4, -3) {$\underline 11\beta 1$};
		\node [none] (10) at (4, -3.5) {};
		\node [none] (11) at (19.25, 4) {$1\beta 1$};
		\node [none] (12) at (19.25, 7) {$mm$};
		\node [none] (13) at (8, 4) {};
		\node [none] (14) at (5.5, 7) {};
		\node [none] (15) at (5.5, 1) {};
		\node [none] (16) at (8.25, 1.25) {$\underline 1mm$};
		\node [none] (17) at (-1.25, 1) {$\underline 1\beta \underline 1$};
		\node [none] (18) at (0.25, -2.25) {$1\beta 1\underline 1$};
		\node [none] (19) at (-2.75, -2.25) {$\underline 1 1\beta 1$};
		\node [none] (20) at (1.5, 3.5) {$1m1m$};
		\node [none] (21) at (-6.25, -0.5) {$\underline 1\beta 1$};
		\node [none] (22) at (-3.75, 3.5) {$m1m1$};
		\node [none] (23) at (4.25, -0.5) {$1\beta \underline 1$};
		\node [none] (24) at (-19.25, 1) {};
		\node [none] (25) at (-19.25, -2.75) {};
		\node [none] (26) at (-19.25, -5.75) {};
		\node [none] (27) at (-19.25, -8.75) {};
		\node [fixed size node, fill=gray!50] (28) at (-13.5, -0.5) {$\alpha\alpha\underline 1$};
		\node [fixed size node] (29) at (-15.5, -6.25) {\adjustbox{valign=m, max width=.6cm, max height=.6cm}{$\underline 1S1\underline 1$}};
		\node [fixed size node] (30) at (-10.5, -6.25) {\adjustbox{valign=m, max width=.6cm, max height=.6cm}{$1R^{-1}\underline 1\underline 1$}};
		\node [none] (31) at (-6.25, -8.75) {};
		\node [none] (32) at (-5.75, -3.5) {};
		\node [none] (33) at (-6.25, 1) {};
		\node [none] (34) at (-5.75, -5.5) {};
		\node [none] (35) at (-13, -4) {$\underline 1\beta \underline 1\underline 1$};
		\node [none] (36) at (-11.25, -7.75) {$1\beta 1\underline 1\underline 1$};
		\node [none] (37) at (-14.75, -7.75) {$\underline 1 1\beta 1\underline 1$};
		\node [none] (38) at (-11, -2) {$1m1m\underline 1$};
		\node [none] (39) at (-17.25, -6.75) {$\underline 1\beta 1\underline 1$};
		\node [none] (40) at (-16.25, -2.25) {$m1m1\underline 1$};
		\node [none] (41) at (-8.5, -6.75) {$1\beta \underline 1\underline 1$};
		\node [none] (42) at (5.5, 1) {};
		\node [none] (43) at (4, -3.5) {};
		\node [none] (44) at (4, -5.5) {};
		\node [none] (45) at (5.5, -8.75) {};
		\node [fixed size node, fill=gray!50] (46) at (10.75, -0.5) {$\underline 1\alpha\alpha$};
		\node [fixed size node] (47) at (8.75, -6.25) {\adjustbox{valign=m, max width=.6cm, max height=.6cm}{$\underline 1\underline 1S1$}};
		\node [fixed size node] (48) at (13.75, -6.25) {\adjustbox{valign=m, max width=.6cm, max height=.6cm}{$\underline 11R^{-1}\underline 1$}};
		\node [none] (49) at (18.5, -8.75) {};
		\node [none] (50) at (18.5, -2.75) {};
		\node [none] (51) at (18.5, 1) {};
		\node [none] (52) at (18.5, -5.75) {};
		\node [none] (53) at (11.25, -4) {$\underline 1\underline 1\beta \underline 1$};
		\node [none] (54) at (13, -7.75) {$\underline 11\beta 1\underline 1$};
		\node [none] (55) at (9.5, -7.75) {$\underline 1\underline 1 1\beta 1$};
		\node [none] (56) at (13.25, -1.75) {$\underline 11m1m$};
		\node [none] (57) at (7, -6.75) {$\underline 1\underline 1\beta 1$};
		\node [none] (58) at (8, -2.25) {$\underline 1m1m1$};
		\node [none] (59) at (15.75, -6.75) {$\underline 11\beta \underline 1$};
		\node [none] (60) at (-19.25, 4) {};
		\node [none] (61) at (-19.25, 7) {};
		\node [none] (62) at (18.5, 4) {};
		\node [none] (63) at (18.5, 7) {};
		\node [none] (64) at (-20, -5.75) {$mm\underline 1\underline 1$};
		\node [none] (65) at (-20, -2.75) {$1\beta 1\underline 1$};
		\node [none] (66) at (-20, 1) {$mm\underline 1$};
		\node [none] (67) at (-20, -8.75) {$1\beta1\underline 1\underline 1$};
		\node [none] (69) at (-1.5, -9.25) {$\underline 11\beta 1\underline 1$};
		\node [none] (70) at (-10.5, 1.25) {$mm\underline 1$};
		\node [none] (71) at (19.25, 1) {$\underline 1 mm$};
		\node [none] (72) at (19.25, -2.75) {$\underline 11\beta 1$};
		\node [none] (73) at (19.25, -5.75) {$\underline 1\underline 1 mm$};
		\node [none] (74) at (19.25, -8.75) {$\underline 1\underline 11\beta 1$};
		\node [none] (75) at (-6.5, -3) {$1\beta 1\underline 1$};
		\node [none] (76) at (-1.25, -6) {$\underline 1 mm\underline 1$};

		\draw [in=135, out=0, looseness=0.75] (2.center) to (6);
		\draw [in=-135, out=0, looseness=0.50] (4.center) to (6);
		\draw [in=0, out=180, looseness=0.75] (7) to (3.center);
		\draw [in=-135, out=0] (5.center) to (8);
		\draw [in=-45, out=180] (10.center) to (7);
		\draw [in=120, out=60, looseness=0.75] (7) to (8);
		\draw [in=180, out=0] (8) to (13.center);
		\draw [in=-45, out=180, looseness=0.50] (15.center) to (6);
		\draw [in=180, out=45, looseness=0.75] (6) to (14.center);
		\draw [in=135, out=0, looseness=0.75] (24.center) to (28);
		\draw [in=-135, out=0, looseness=0.50] (26.center) to (28);
		\draw [in=0, out=180, looseness=0.75] (29) to (25.center);
		\draw [in=-135, out=0] (27.center) to (30);
		\draw [in=-45, out=180] (31.center) to (29);
		\draw [in=120, out=60, looseness=0.75] (29) to (30);
		\draw [in=180, out=0] (30) to (32.center);
		\draw [in=-45, out=180] (34.center) to (28);
		\draw [in=180, out=45, looseness=0.75] (28) to (33.center);
		\draw [in=135, out=0, looseness=0.75] (42.center) to (46);
		\draw [in=-135, out=0] (44.center) to (46);
		\draw [in=0, out=180, looseness=0.75] (47) to (43.center);
		\draw [in=-135, out=0] (45.center) to (48);
		\draw [in=-45, out=180] (49.center) to (47);
		\draw [in=120, out=60, looseness=0.75] (47) to (48);
		\draw [in=180, out=0] (48) to (50.center);
		\draw [in=-45, out=180, looseness=0.50] (52.center) to (46);
		\draw [in=180, out=45, looseness=0.75] (46) to (51.center);
		\draw (34.center) to (44.center);
		\draw (31.center) to (45.center);
		\draw (61.center) to (2.center);
		\draw (3.center) to (60.center);
		\draw (63.center) to (14.center);
		\draw (13.center) to (62.center);
\end{tikzpicture}

\end{center}
In the bottom area there is a tensored list of six hom-objects of $\cat B\otimes\cat C$. Since each hom-object for the tensor product bicategory is a tensored pair of hom-objects from the two tensored bicategories, we are in fact dealing with twelve hom-objects.

The idea is to assemble all of the instances of $\alpha$ (the 2-cells in light gray) in order to apply in parallel associativity coherence for $\cat B$ and $\cat C$. So, let us first slide the two central 2-cells $\underline 1S1$ and $1R^{-1}\underline 1$ below the morphism $\underline 1mm\underline 1$. We find, after a manipulation provided by naturality for $\beta$ and the modification axiom for the central composite of $\underline 1S1$ and $1R^{-1}\underline 1$:

\begin{center}
\begin{tikzpicture}[scale=.4]
		\node [style=none] (0) at (-19.75, 5.25) {$mm$};
		\node [style=none] (1) at (-19.75, 2.25) {$1\beta 1$};
		\node [style=none] (4) at (-6, 5.25) {};
		\node [style=none] (6) at (-6, 2.25) {};
		\node [style=none] (7) at (-5, -8) {};
		\node [fixed size node] (8) at (-1, 3) {$\alpha\alpha$};
		\node [fixed size node] (9) at (-3, -5.5) {\adjustbox{valign=m, max width=.6cm, max height=.6cm}{$\underline 1S1$}};
		\node [fixed size node] (10) at (2, -5.5) {\adjustbox{valign=m, max width=.6cm, max height=.6cm}{$1R^{-1}\underline 1$}};
		\node [style=none] (12) at (4.75, -8) {};
		\node [style=none] (13) at (19.5, 2.25) {$1\beta 1$};
		\node [style=none] (14) at (19.5, 5.25) {$mm$};
		\node [style=none] (16) at (5.75, 5.25) {};
		\node [style=none] (17) at (5.25, 2.25) {};
		\node [style=none] (18) at (7.25, 2.75) {$\underline 1mm$};
		\node [style=none] (19) at (-0.5, -3.5) {$\underline 1\beta \underline 1$};
		\node [style=none] (20) at (1.5, -7) {$1\beta 1\underline 1$};
		\node [style=none] (21) at (-2.5, -7) {$\underline 1 1\beta 1$};
		\node [style=none] (22) at (1.25, 1.75) {$1m1m$};
		\node [style=none] (23) at (-5.25, -4.75) {$\underline 1\beta 1$};
		\node [style=none] (24) at (-3, 1.75) {$m1m1$};
		\node [style=none] (25) at (4.5, -4.75) {$1\beta \underline 1$};
		\node [style=none] (26) at (-19, -0.75) {};
		\node [style=none] (27) at (-19, -4.5) {};
		\node [style=none] (28) at (-19, -7.5) {};
		\node [style=none] (29) at (-19, -10.5) {};
		\node [fixed size node] (30) at (-9.75, 1.25) {\adjustbox{valign=m, max width=.6cm, max height=.6cm}{$\alpha1\alpha1$}};
		\node [fixed size node] (31) at (-15.25, -8){\adjustbox{valign=m, max width=.6cm, max height=.6cm} {$\underline 1S1\underline 1$}};
		\node [fixed size node] (32) at (-10.25, -8) {\adjustbox{valign=m, max width=.6cm, max height=.6cm}{$1R^{-1}\underline 1\underline 1$}};
		\node [style=none] (33) at (-6, -10.5) {};
		\node [style=none] (35) at (-6, 2.25) {};
		\node [style=none] (36) at (-6, 0) {};
		\node [style=none] (37) at (-12.75, -5.75) {$\underline 1\beta \underline 1\underline 1$};
		\node [style=none] (38) at (-10.75, -9.25) {$1\beta 1\underline 1\underline 1$};
		\node [style=none] (39) at (-14.5, -9.5) {$\underline 1 1\beta 1\underline 1$};
		\node [style=none] (41) at (-16.75, -8.5) {$\underline 1\beta 1\underline 1$};
		\node [style=none] (42) at (-12.25, -0.5) {$m\underline1m\underline 1$};
		\node [style=none] (43) at (-4.5, -8.5) {$1\beta \underline 1\underline 1$};
		\node [style=none] (44) at (5.25, 2.25) {};
		\node [style=none] (46) at (5, 0) {};
		\node [style=none] (47) at (5.75, -10.5) {};
		\node [fixed size node] (48) at (10.25, 1) {\adjustbox{valign=m, max width=.6cm, max height=.6cm}{$1\alpha1\alpha$}};
		\node [fixed size node] (49) at (9, -8) {\adjustbox{valign=m, max width=.6cm, max height=.6cm}{$\underline 1\underline 1S1$}};
		\node [fixed size node] (50) at (14, -8) {\adjustbox{valign=m, max width=.6cm, max height=.6cm}{$\underline 11R^{-1}\underline 1$}};
		\node [style=none] (51) at (18.75, -10.5) {};
		\node [style=none] (52) at (18.75, -4.5) {};
		\node [style=none] (53) at (18.75, -0.75) {};
		\node [style=none] (54) at (18.75, -7.5) {};
		\node [style=none] (55) at (11.5, -5.75) {$\underline 1\underline 1\beta \underline 1$};
		\node [style=none] (56) at (13.25, -9.5) {$\underline 11\beta 1\underline 1$};
		\node [style=none] (57) at (9.5, -9.25) {$\underline 1\underline 1 1\beta 1$};
		\node [style=none] (58) at (12.5, -0.25) {$\underline 1m\underline 1m$};
		\node [style=none] (59) at (4, -8.5) {$\underline 1\underline 1\beta 1$};
		\node [style=none] (61) at (15.75, -8.5) {$\underline 11\beta \underline 1$};
		\node [style=none] (62) at (-19, 2.25) {};
		\node [style=none] (63) at (-19, 5.25) {};
		\node [style=none] (64) at (18.75, 2.25) {};
		\node [style=none] (65) at (18.75, 5.25) {};
		\node [style=none] (66) at (-19.75, -7.5) {$mm\underline 1\underline 1$};
		\node [style=none] (67) at (-19.75, -4.5) {$1\beta 1\underline 1$};
		\node [style=none] (68) at (-19.75, -0.75) {$mm\underline 1$};
		\node [style=none] (69) at (-19.75, -10.5) {$1\beta1\underline 1\underline 1$};
		\node [style=none] (71) at (-1.25, -11.25) {$\underline 11\beta 1\underline 1$};
		\node [style=none] (72) at (-7, 3) {$mm\underline 1$};
		\node [style=none] (73) at (19.5, -0.75) {$\underline 1 mm$};
		\node [style=none] (74) at (19.5, -4.5) {$\underline 11\beta 1$};
		\node [style=none] (75) at (19.5, -7.5) {$\underline 1\underline 1 mm$};
		\node [style=none] (76) at (19.5, -10.5) {$\underline 1\underline 11\beta 1$};
		\node [style=none] (81) at (-10.25, -3.75) {};
		\node [style=none] (82) at (11, -3.5) {};
		\node [style=none] (83) at (-0.75, -0.5) {$1m\underline 1m1$};
		\node [style=none] (86) at (-12.5, 2.5) {$m1m1$};
		\node [style=none] (87) at (12.75, 2.25) {$1m1m$};
		\node [style=none] (88) at (14.25, -1.25) {$1\beta \underline 1$};
		\node [style=none] (89) at (-14.25, -1) {$\underline 1\beta 1$};
		\node [style=none] (96) at (-15.25, -4) {$m1m1\underline 1$};
		\node [style=none] (97) at (15.5, -4.25) {$\underline 11m1m$};
		\node [style=none] (98) at (-10.75, -2.5) {};
		\node [style=none] (99) at (-11.75, -3.5) {};
		\node [style=none] (100) at (9.75, -2.5) {};
		\node [style=none] (101) at (10.75, -3.5) {};
		\node [style=none] (102) at (10.75, -8.75) {};
		\node [style=none] (103) at (9.75, -9.75) {};
		\node [style=none] (104) at (-10.75, -9.75) {};
		\node [style=none] (105) at (-11.75, -8.75) {};
		\node [style=none] (106) at (-10.25, -1.5) {};
		\node [style=none] (107) at (-11.25, -0.5) {};
		\node [style=none] (108) at (-10.25, 5.75) {};
		\node [style=none] (109) at (-11.25, 4.75) {};
		\node [style=none] (110) at (10.25, 5.75) {};
		\node [style=none] (111) at (11.25, 4.75) {};
		\node [style=none] (112) at (11.25, -0.5) {};
		\node [style=none] (113) at (10.25, -1.5) {};

		\draw [in=135, out=0, looseness=0.75] (4.center) to (8);
		\draw [in=-135, out=0, looseness=0.50] (6.center) to (8);
		\draw [in=-135, out=0] (7.center) to (10);
		\draw [in=-45, out=180] (12.center) to (9);
		\draw [in=120, out=60, looseness=0.75] (9) to (10);
		\draw [in=-45, out=180, looseness=0.50] (17.center) to (8);
		\draw [in=180, out=45, looseness=0.75] (8) to (16.center);
		\draw [in=135, out=0, looseness=0.75] (26.center) to (30);
		\draw [in=-135, out=0, looseness=0.50] (28.center) to (30);
		\draw [in=0, out=180, looseness=0.75] (31) to (27.center);
		\draw [in=-135, out=0] (29.center) to (32);
		\draw [in=-45, out=180] (33.center) to (31);
		\draw [in=120, out=60, looseness=0.75] (31) to (32);
		\draw [in=-45, out=180, looseness=0.50] (36.center) to (30);
		\draw [in=180, out=45, looseness=0.75] (30) to (35.center);
		\draw [in=135, out=0, looseness=0.75] (44.center) to (48);
		\draw [in=-135, out=0, looseness=0.50] (46.center) to (48);
		\draw [in=-135, out=0] (47.center) to (50);
		\draw [in=-45, out=180] (51.center) to (49);
		\draw [in=120, out=60, looseness=0.75] (49) to (50);
		\draw [in=180, out=0] (50) to (52.center);
		\draw [in=-45, out=180, looseness=0.50] (54.center) to (48);
		\draw [in=180, out=45, looseness=0.75] (48) to (53.center);
		\draw [in=180, out=0] (36.center) to (46.center);
		\draw (33.center) to (47.center);
		\draw (63.center) to (4.center);
		\draw (65.center) to (16.center);
		\draw [in=-150, out=0, looseness=0.75] (10) to (82.center);
		\draw [in=180, out=30, looseness=0.75] (82.center) to (64.center);
		\draw [in=-30, out=180, looseness=0.75] (9) to (81.center);
		\draw [in=0, out=150, looseness=0.50] (81.center) to (62.center);
		\draw [style=dashed, in=180, out=-90] (105.center) to (104.center);
		\draw [style=dashed] (105.center) to (99.center);
		\draw [style=dashed, in=180, out=90] (99.center) to (98.center);
		\draw [style=dashed] (98.center) to (100.center);
		\draw [style=dashed, in=90, out=0] (100.center) to (101.center);
		\draw [style=dashed] (101.center) to (102.center);
		\draw [style=dashed, in=0, out=-90] (102.center) to (103.center);
		\draw [style=dashed] (103.center) to (104.center);
		\draw [style=dashed, in=90, out=-180] (108.center) to (109.center);
		\draw [style=dashed, in=180, out=0] (108.center) to (110.center);
		\draw [style=dashed, in=90, out=0] (110.center) to (111.center);
		\draw [style=dashed] (111.center) to (112.center);
		\draw [style=dashed, in=0, out=-90] (112.center) to (113.center);
		\draw [style=dashed] (113.center) to (106.center);
		\draw [style=dashed, in=270, out=-180] (106.center) to (107.center);
		\draw [style=dashed] (107.center) to (109.center);
		\draw [in=360, out=180] (7.center) to (32);
		\draw [in=180, out=0] (12.center) to (49);
\end{tikzpicture}

\end{center}
Now in the highlighted box on top we can apply the associativity coherence for $\cat B$ and $\cat C$. The dotted box at the bottom can instead be manipulated as follows. Firs, let us join identities (see Remark \ref{join identities}) in the two 2-cells $1R^{-1}\underline 1\underline 1$ and $\underline 1\underline 1 S1$. These become then respectively $1R1\underline 1$ and $\underline 1 1S1$:
\begin{center}
\begin{tikzpicture}[scale=.5]
		\node [fixed size node] (1) at (-2.5, 0.5) {\adjustbox{valign=m, max width=.6cm, max height=.6cm}{$\underline 1S1$}};
		\node [fixed size node] (2) at (2.5, 0.5) {\adjustbox{valign=m, max width=.6cm, max height=.6cm}{$1R^{-1}\underline 1$}};
		\node [style=none] (4) at (0, 2.5) {$\underline 1\beta \underline 1$};
		\node [style=none] (5) at (2, -1) {$1\beta 1\underline 1$};
		\node [style=none] (6) at (-2, -1) {$\underline 1 1\beta 1$};
		\node [style=none] (7) at (-7.75, 0.5) {$\underline 1\beta 1$};
		\node [style=none] (8) at (7.25, 0.5) {$1\beta \underline 1$};
		\node [fixed size node] (9) at (-4.25, -2) {\adjustbox{valign=m, max width=.6cm, max height=.6cm}{$1R^{-1} 1\underline 1$}};
		\node [style=none] (10) at (-7.75, -3) {$1\beta \underline 1\underline 1$};
		\node [style=none] (11) at (-3, -2.5) {$1\beta 1\underline 1$};
		\node [fixed size node] (12) at (4.25, -2) {\adjustbox{valign=m, max width=.6cm, max height=.6cm}{$\underline 1 1S1$}};
		\node [style=none] (14) at (7.25, -3) {$\underline 1\underline 1\beta 1$};
		\node [style=none] (15) at (-7, 0.5) {};
		\node [style=none] (16) at (-7, -3) {};
		\node [style=none] (17) at (-7, -1.25) {};
		\node [style=none] (18) at (6.5, 0.5) {};
		\node [style=none] (19) at (6.5, -1.25) {};
		\node [style=none] (20) at (6.5, -3) {};
		\node [style=none] (21) at (-7.75, -1.25) {$\underline 1\beta 1\underline 1$};
		\node [style=none] (22) at (7.25, -1.25) {$\underline 1 1\beta\underline 1$};
		\node [style=none] (23) at (3, -2.5) {$\underline 1 1\beta 1$};
		
		\draw [in=120, out=60, looseness=0.75] (1) to (2);
		\draw [in=360, out=180, looseness=0.75] (1) to (15.center);
		\draw [in=180, out=-45, looseness=0.75] (12) to (20.center);
		\draw [in=-180, out=30, looseness=0.75] (12) to (19.center);
		\draw [in=0, out=-180] (18.center) to (2);
		\draw [in=0, out=-135, looseness=0.75] (9) to (16.center);
		\draw [in=0, out=135, looseness=0.75] (9) to (17.center);
		\draw [in=-180, out=-60, looseness=0.75] (1) to (12);
		\draw [in=-120, out=0, looseness=0.75] (9) to (2);
\end{tikzpicture}
\end{center}
Then, by the modification axiom, this is equal to
\begin{center}
\begin{tikzpicture}[scale=.4]
		\node [fixed size node] (0) at (-7, 2) {\adjustbox{valign=m, max width=.6cm, max height=.6cm}{$\underline 1S1$}};
		\node [fixed size node] (1) at (7, 2) {\adjustbox{valign=m, max width=.6cm, max height=.6cm}{$1R^{-1}\underline 1$}};
		\node [style=none] (2) at (0, 3.75) {$\underline 1\beta \underline 1$};
		\node [style=none] (3) at (6.75, 0.5) {$1\beta 1\underline 1$};
		\node [style=none] (5) at (-9.5, 2) {$\underline 1\beta 1$};
		\node [style=none] (6) at (9.75, 2) {$1\beta \underline 1$};
		\node [fixed size node] (7) at (4.25, 0.25) {\adjustbox{valign=m, max width=.6cm, max height=.6cm}{$1R^{-1}1\underline 1$}};
		\node [style=none] (8) at (-9.5, -3.25) {$1\beta \underline 1\underline 1$};
		\node [fixed size node] (10) at (-4.25, 0.25) {\adjustbox{valign=m, max width=.6cm, max height=.6cm}{$\underline 1 1S1$}};
		\node [style=none] (11) at (9.75, -3.25) {$\underline 1\underline 1\beta 1$};
		\node [style=none] (12) at (-8.75, 2) {};
		\node [style=none] (13) at (-2.25, -3.25) {};
		\node [style=none] (14) at (-4, -2.25) {};
		\node [style=none] (15) at (9, 2) {};
		\node [style=none] (16) at (4, -2.25) {};
		\node [style=none] (17) at (2.25, -3.25) {};
		\node [style=none] (18) at (-9.5, -2.25) {$\underline 1\beta 1\underline 1$};
		\node [style=none] (19) at (9.75, -2.25) {$\underline 1 1\beta\underline 1$};
		\node [style=none] (20) at (0, 3.25) {};
		\node [style=none] (21) at (2.5, 1.25) {};
		\node [style=none] (22) at (-2.5, 1.25) {};
		\node [style=none] (24) at (9, -2.25) {};
		\node [style=none] (25) at (-8.75, -2.25) {};
		\node [style=none] (26) at (-8.75, -3.25) {};
		\node [style=none] (27) at (9, -3.25) {};
		\node [style=none] (28) at (2.25, 1.75) {$\underline 1 \beta 1\underline 1$};
		\node [style=none] (29) at (-2.25, 1.75) {$\underline 1 1\beta \underline 1$};
		\node [style=none] (30) at (-4, -1) {$\underline 1\underline 1\beta 1$};
		\node [style=none] (31) at (4, -1) {$1\beta\underline 1\underline1$};
		\node [style=none] (32) at (-6.75, 0.5) {$\underline 1 1\beta 1$};
		
		\draw [in=360, out=180, looseness=0.75] (0) to (12.center);
		\draw [in=180, out=-60, looseness=0.75] (10) to (17.center);
		\draw [in=0, out=-180] (15.center) to (1);
		\draw [in=0, out=-120, looseness=0.75] (7) to (13.center);
		\draw [in=-180, out=-60, looseness=0.75] (0) to (10);
		\draw [in=-120, out=0, looseness=0.75] (7) to (1);
		\draw [in=45, out=-180, looseness=0.75] (20.center) to (0);
		\draw [in=135, out=0, looseness=0.75] (20.center) to (1);
		\draw [in=0, out=135] (7) to (21.center);
		\draw [in=0, out=180, looseness=0.75] (21.center) to (14.center);
		\draw [in=-180, out=45] (10) to (22.center);
		\draw [in=-180, out=0] (22.center) to (16.center);
		\draw [in=360, out=180] (24.center) to (16.center);
		\draw [in=180, out=0] (25.center) to (14.center);
		\draw [in=180, out=0] (26.center) to (13.center);
		\draw [in=180, out=0] (17.center) to (27.center);
\end{tikzpicture}
\end{center}
We can hence apply braiding axioms \eqref{BA1} and \eqref{BA2} in order to get
\begin{center}
\begin{tikzpicture}[scale=.4]
		\node [fixed size node] (0) at (-7, 1.75) {\adjustbox{valign=m, max width=.6cm, max height=.6cm}{$\underline 1S1$}};
		\node [fixed size node] (1) at (7, 1.75) {\adjustbox{valign=m, max width=.6cm, max height=.6cm}{$1R^{-1}\underline 1$}};
		\node [style=none] (2) at (0, 4.75) {$\underline 1\beta \underline 1$};
		\node [style=none] (3) at (6.5, 0) {$1\beta 1\underline 1$};
		\node [style=none] (4) at (-9.5, 1.75) {$\underline 1\beta 1$};
		\node [style=none] (5) at (9.75, 1.75) {$1\beta \underline 1$};
		\node [style=none] (7) at (-9.5, -3.5) {$1\beta \underline 1\underline 1$};
		\node [style=none] (9) at (9.75, -3.5) {$\underline 1\underline 1\beta 1$};
		\node [style=none] (10) at (-8.75, 1.75) {};
		\node [style=none] (11) at (-2.25, -3.5) {};
		\node [style=none] (12) at (-4, -2.5) {};
		\node [style=none] (13) at (9, 1.75) {};
		\node [style=none] (14) at (4, -2.5) {};
		\node [style=none] (15) at (2.25, -3.5) {};
		\node [style=none] (16) at (-9.5, -2.5) {$\underline 1\beta 1\underline 1$};
		\node [style=none] (17) at (9.75, -2.5) {$\underline 1 1\beta\underline 1$};
		\node [style=none] (18) at (0, 4.25) {};
		\node [style=none] (21) at (9, -2.5) {};
		\node [style=none] (22) at (-8.75, -2.5) {};
		\node [style=none] (23) at (-8.75, -3.5) {};
		\node [style=none] (24) at (9, -3.5) {};
		\node [style=none] (25) at (2.25, 1.5) {$\underline 1 \beta 1\underline 1$};
		\node [style=none] (26) at (-2.25, 1.5) {$\underline 1 1\beta \underline 1$};
		\node [style=none] (27) at (-4, -1.25) {$\underline 1\underline 1\beta 1$};
		\node [style=none] (28) at (4, -1.25) {$1\beta\underline 1\underline1$};
		\node [style=none] (29) at (-6.5, 0) {$\underline 1 1 \beta 1$};
		\node [style=none] (30) at (-5, -0.25) {};
		\node [fixed size node] (31) at (-4.5, 3) {\adjustbox{valign=m, max width=.6cm, max height=.6cm}{$\underline 1S\underline 1$}};
		\node [fixed size node] (32) at (4.5, 3) {\adjustbox{valign=m, max width=.6cm, max height=.6cm}{$\underline 1 R^{-1}\underline 1$}};
		\node [style=none] (33) at (5.25, -0.25) {};
		\node [style=none] (34) at (-3.25, 4.5) {$\underline 1 \beta 1\underline 1$};
		\node [style=none] (35) at (3.25, 4.5) {$\underline 1 1\beta\underline 1$};
		\node [style=none] (36) at (6.75, 3) {$\underline 1\beta \underline 1$};
		\node [style=none] (37) at (-6.75, 3) {$\underline 1\beta \underline 1$};
		
		\draw [in=360, out=180, looseness=0.75] (0) to (10.center);
		\draw [in=0, out=-180] (13.center) to (1);
		\draw [in=360, out=180] (21.center) to (14.center);
		\draw [in=180, out=0] (22.center) to (12.center);
		\draw [in=180, out=0] (23.center) to (11.center);
		\draw [in=180, out=0] (15.center) to (24.center);
		\draw [in=150, out=-75] (0) to (30.center);
		\draw [in=180, out=-30, looseness=0.75] (30.center) to (15.center);
		\draw [in=-180, out=45, looseness=0.75] (31) to (18.center);
		\draw [in=60, out=-180] (31) to (0);
		\draw [in=0, out=120] (1) to (32);
		\draw [in=0, out=135, looseness=0.75] (32) to (18.center);
		\draw [in=30, out=-105] (1) to (33.center);
		\draw [in=0, out=-150, looseness=0.75] (33.center) to (11.center);
		\draw [in=-105, out=0, looseness=0.50] (12.center) to (32);
		\draw [in=180, out=-60, looseness=0.50] (31) to (14.center);
\end{tikzpicture}

\end{center}
Now, let us return to the whole 2-cell, and let us also reduce, by the same Remark (\ref{join identities}) used above, the two external 2-cells $\underline 1S1\underline 1$ at the beginning and $\underline 1 1R^{-1}\underline 1$ at the end of the composition to $\underline 1S\underline 1$ and $\underline 1 R^{-1}\underline 1$. The careful reader will easily be able to place the objects in the areas of the string diagram in order to see how this join of identities is coherent with the one used before. That means, the joining of identities could have in fact be done at once by replacing the composite $1R^{-1}\underline 1\underline 1\circ \underline 1 S 1\underline 1$ by $1R^{-1}\underline 1\circ \underline 1 S\underline 1$, and similarly on the right part of the 2-cell by replacing $\underline 1 1R^{-1}\underline 1\circ \underline 1\underline 1 S1$ by $1\underline 1 1S 1\circ \underline 1R\underline 1$.

The whole 2-cell becomes then, by reinserting,
\begin{center}


\end{center}
concluding the proof.
\end{proof}

\begin{rmk}\label{Iop times I}
One can easily check that the unit $\cat V$-bicategory $\cat I$ works in fact as a unit for the tensor product of $\cat V$-bicategories. Moreover is it easily seen to be isomorphic to its opposite, and hence we have $\cat I\op\otimes\cat I\cong\cat I$. This observation will allow us in Section \ref{section bicoends} to talk about \emph{constant} $\cat V$-pseudofunctors $\cat I\op\otimes\cat I\to \cat D$ into a $\cat V$-bicategory $\cat D$.

The constant $\cat V$-pseudofunctor over an object $d$ in a $\cat V$-bicategory $\cat D$ is in fact the $\cat V$-pseudofunctor $\cat I\to\cat D$ given by mapping the unique object $\ast$ of $\cat I$ over the object $d$, together with $u_d\colon\mathbb 1=\cat I(\ast,\ast)\to \cat D(d,d)$.
\end{rmk}

\begin{rmk}\label{opdistributeovertensor}
It is an easy check to observe that the opposite distribute over the tensor product, in the sense that for $\cat V$-bicategories $\cat B,\cat C$ there is a $\cat V$-equivalence between $(\cat B\otimes\cat C)\op$ and $\cat B\op\otimes\cat C\op$.
\end{rmk}

\subsection{$\cat V$ as a $\cat V$-bicategory}\label{section closed monoidal}

The present section is the last one of this article dedicated to a proof of the coherence axioms for a $\cat V$-bicategory. The $\cat V$-bicategorical structure that we are going to prove is that of the monoidal bicategory $\cat V$ itself, under the assumption of being right closed (Section \ref{sec string adj}).

\begin{defn}\label{def V as V bicat}
Let $\cat V$ be a right closed monoidal bicategoryn as in Section \ref{subsect tensor-hom}. Then we can define
\begin{itemize}
\item Its class of objects $\cat V_0$
\item For every pair of objects $a,b$, the hom-object $\cat V(a,b)\coloneqq[a,b]$ in $\cat V$, together with, for every triple of object $a,b,c$, the multiplication
\[m\colon [b,c][a,b]\overset{\eta}\longrightarrow[a,[b,c][a,b]a]\overset{[1,1\varepsilon]}\longrightarrow[a,[b,c]b]\overset{[1,\varepsilon]}\longrightarrow[a,c],\]
and the unit
\[u\colon \mathbb{1}\overset{\eta}\longrightarrow[a,\mathbb{1}a]=[a,a].\]
\item The associator, given by the following string diagram
\begin{center}

\end{center}
The precise sequence of steps leading to it is left as a zen exercise to the willing reader.
\end{proof}

\subsection{The enriched hom-pseudofunctor}\label{subsect hom pseudo}

With the ingredients of the above three sections, namely the opposite $\cat V$-bicategory, the tensor product of two $\cat V$-bicategories and the self-enrichment of $\cat V$, we are able to define hom-pseudofunctors
\[\cat C(-,-)\colon\cat C\op\otimes\cat C\longrightarrow\cat V\]
for every $\cat V$-bicategory $\cat C$ over a right closed braided monoidal $\cat V$. In this section we are going to provide the $\cat V$-pseudofunctor structure to $\cat C(-,-)$. To be honest, we are going to limit ourselves to providing the structure structure and the idea of a proof for $\cat C(a,-)\colon\cat C\to\cat V$, which is defined on hom-objects as
\begin{equation}\label{hom pseudofunctor}
\cat C(b,c)\overset{\eta}\longrightarrow[\cat C(a,b),\cat C(b,c)\cat C(a,b)]\overset{[1,m]}\longrightarrow[\cat C(a,b),\cat C(a,c)].
\end{equation}
Then, the case $\cat C(-,b)\colon\cat C\op\to\cat V$ is very similar to treat, though instances of $\beta$ appear, both in the definition of the multiplication for $\cat C\op$, and in the definition of the pseudofunctor on hom-objects:
\[\cat C(-,d)\colon \cat C(b,c)\overset{\eta}\longrightarrow[\cat C(c,d),\cat C(b,c)\cat C(c,d)]\overset{[1,\beta]}\longrightarrow[\cat C(c,d),\cat C(c,d)\cat C(b,c)]\overset{[1,m]}\longrightarrow[\cat C(c,d),\cat C(b,d)].\]
The more general case proving that $\cat C(-,-)$ is a $\cat V$-pseudofunctor is just incredibly more space-consuming, but not conceptually different, and can moreover be deduced from the existence of $\cat V$-pseudofunctors in each variable by generalizing an easy argument which can be found at \cite{closedcats}, Chapter III, Section 4.

\begin{thm}
Let $\cat C$ be a $\cat V$-bicategory, for $\cat V$ a right closed braided monoidal bicategory. Then, for every object $a$ in $\cat C$ the morphism \eqref{hom pseudofunctor}
is part of a $\cat V$-pseudofunctor $\cat C\to\cat V$.
\end{thm}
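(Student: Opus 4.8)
The plan is to exhibit the data and axioms of Definition \ref{def pseudofunctor} for the object assignment $b\mapsto\cat C(a,b)$ together with the hom-morphisms \eqref{hom pseudofunctor}, using the self-enriched structure on $\cat V$ of Definition \ref{def V as V bicat} on the target side; the object map is trivial, so all the content is in the structural $2$-cells and the three axioms. The guiding observation is that the composite \eqref{hom pseudofunctor} is precisely the \emph{mate}, under the tensor--hom pseudoadjunction $-\otimes\cat C(a,b)\dashv[\cat C(a,b),-]$, of the composition morphism $m\colon\cat C(b,c)\otimes\cat C(a,b)\to\cat C(a,c)$ of $\cat C$ (the transpose of $f\colon B\otimes A\to C$ being $[A,f]\circ\eta_B$). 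Consequently every piece of $\cat V$-pseudofunctor structure to be produced should arise by transposing across this adjunction a corresponding piece of the associativity/unit coherence of $\cat C$, and every axiom to be checked should reduce --- after uncurrying --- to the $\cat V$-bicategory axioms \eqref{IC Vbicat} and \eqref{AC Vbicat} for $\cat C$, together with the adjunction coherences: naturality of $\eta$ and $\varepsilon$ (rules \eqref{naturality eta} and \eqref{naturality counit}), the modification axioms for the triangulators \eqref{triangulators}, and the swallowtail equations \eqref{swallowtail1} and \eqref{swallowtail2}. As in the proofs of Theorems \ref{thmCop}, \ref{thmBtensorC} and \ref{closed monoidal is self enriched}, the computations are meant to be carried out in string diagrams.

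Concretely, the structural $2$-cell of type \eqref{fun e un} comparing $m_{\cat V}\circ(\cat C(a,-)\otimes\cat C(a,-))$ with $\cat C(a,-)\circ m$ is built as follows: unfolding the definition of $m_{\cat V}$ (the string $[1,\varepsilon]\circ[1,1\varepsilon]\circ\eta$ of Definition \ref{def V as V bicat}) and of the two copies of \eqref{hom pseudofunctor}, one first creates a transposition context by sliding the outer $\eta$'s past the inner $[1,m]$'s via \eqref{naturality eta}, then collapses the resulting $\eta$--$\varepsilon$ zig-zags using the triangulators and their swallowtail relations, and finally recognises an instance of the associativity coherence \eqref{AC Vbicat} of $\cat C$ in the form $m\circ(m\otimes1)\cong m\circ(1\otimes m)$. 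The unit $2$-cell comparing $u_{\cat V}=\eta$ with $\cat C(a,-)\circ u_a$ is obtained in the same spirit, this time from the unitors $\lambda$ (resp.\ $\rho$) of $\cat C$: one slides $\eta$ across $[1,m]$, cancels a triangulator by a swallowtail equation, and reads off the unitor. Both $2$-cells are invertible since $\eta,\varepsilon$ are equivalences, $s,t$ are invertible, and $\alpha,\lambda,\rho$ of $\cat C$ are $2$-isomorphisms.

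The verification of \eqref{psfun ax1}, \eqref{psfun ax2}, \eqref{psfun ax3} then proceeds by the same repertoire of moves. Axiom \eqref{psfun ax2} (and symmetrically \eqref{psfun ax3}) unwinds to the identity coherence \eqref{IC Vbicat} of $\cat C$ after a bounded sequence of naturality and triangulator slides. The hard part will be axiom \eqref{psfun ax1}: it equates two pastings each of which mixes the associator $\alpha$ of $\cat C$ with the associator of $\cat V$-as-a-$\cat V$-bicategory, and the latter is itself the intricate string diagram of Definition \ref{def V as V bicat} assembled from $\eta,\varepsilon$ and $s$ (with the cells $[1,s^{-1}]$, $[1,1s]$). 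The bulk of the work is therefore to transport both sides of \eqref{psfun ax1} into a common transposition context --- repeatedly applying \eqref{naturality eta}, \eqref{naturality counit}, the modification axioms of $s$ and $t$, and both swallowtail equations --- until what remains on each side is exactly the $\cat C$-side of \eqref{AC Vbicat}; closing the argument is then precisely the associativity coherence of $\cat C$, in close analogy with the associativity part of the proof of Theorem \ref{closed monoidal is self enriched}.

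Finally, as indicated before the statement, the cases $\cat C(-,b)\colon\cat C\op\to\cat V$ and the full bivariant $\cat C(-,-)\colon\cat C\op\otimes\cat C\to\cat V$ require no new idea: for $\cat C(-,b)$ one carries along the braiding cells $\beta$ coming from $\underline m$ in $\cat C\op$ and from the factor $[1,\beta]$ in the hom-formula, and disposes of them using naturality of $\beta$ (rule \eqref{naturality beta}) and the braiding axioms \eqref{BA1}--\eqref{BA4}, exactly as in Theorem \ref{thmCop}; the bivariant case is then deduced from the two one-variable pseudofunctors by the standard argument of \cite{closedcats}, Chapter III, Section 4.
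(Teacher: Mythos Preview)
Your overall strategy is the paper's: define the structural $2$-cells by transporting $\alpha,\lambda,\rho$ of $\cat C$ across the tensor--hom pseudoadjunction (the functoriality cell is built from $[1,1s^{-1}]$, $[1,s^{-1}]$ and $[1,\alpha^{-1}]$; the unit cell from $[1,\lambda^{-1}]$), and then reduce each pseudofunctor axiom to the corresponding coherence of $\cat C$ by naturality of $\eta,\varepsilon$ and the modification property of the triangulators.

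One point of detail worth correcting: you have the r\^ole of the swallowtail equations swapped between the two axioms. In the paper's argument for \eqref{psfun ax1}, the two instances of $s$ coming from the $\cat V$-associator and from the functoriality cell (namely $[1,1s]$ and $1[1,s^{-1}]$) slide past each other by naturality and modification alone and cancel as mutual inverses --- \emph{no} swallowtail is invoked there, and what remains is exactly \eqref{AC Vbicat} applied under $[1,-]$. Conversely, in \eqref{psfun ax2} (and symmetrically \eqref{psfun ax3}) the cancellation that reduces the diagram to \eqref{IC Vbicat} genuinely uses the swallowtail equation \eqref{swallowtail1}: after sliding $\eta$ and recognising that the relevant $[1,1s^{-1}]$ is actually a $[1,s^{-1}]$ (the extra $1$ being a monoidal unit), one applies \eqref{swallowtail1} to collapse a $t$--$s^{-1}$ pair, and only then invokes Mac~Lane coherence for $\cat C$. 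So your plan is right, but the bookkeeping of which axiom needs which adjunction coherence should be adjusted.
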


\begin{proof}
One has to define the higher structure of $F=\cat C(a,-)$, namely the two 2-cells \eqref{fun e un} of Definition \ref{def pseudofunctor}, and check the axioms. The two are given as

\begin{minipage}{.05\textwidth}
\begin{equation}\label{fun hom}
\phantom{c}
\end{equation}
\end{minipage}
\begin{minipage}{.95\textwidth}
\begin{center}
\begin{tikzpicture}[scale=.5]
		\node [style=none] (1) at (-4, 2.75) {};
		\node [style=none] (2) at (-4, 1.5) {};
		\node [style=none] (3) at (-4, 0.25) {};
		\node [style=none] (4) at (-4, -1) {};
		\node [style=none] (5) at (-4, -2.25) {};
		\node [style=none] (6) at (-4.75, 2.75) {$[1,\varepsilon]$};
		\node [style=none] (7) at (-4.75, 1.5) {$[1,1\varepsilon]$};
		\node [style=none] (8) at (-4.75, 0.25) {$\eta$};
		\node [style=none] (9) at (-5, -1) {$[1,m]1$};
		\node [style=none] (10) at (-5, -2.25) {$1[1,m]$};
		\node [style=none] (11) at (-4, -3.75) {};
		\node [style=none] (12) at (-4, -5) {};
		\node [style=none] (13) at (-4.75, -3.75) {$1\eta$};
		\node [style=none] (14) at (-4.75, -5) {$\eta 1$};
		\node [style=none] (15) at (-5.5, -0.5) {};
		\node [style=none] (16) at (-6, -1) {};
		\node [style=none] (17) at (-6, -5) {};
		\node [style=none] (18) at (-5.5, -5.5) {};
		\node [style=none] (19) at (-5.5, 3.25) {};
		\node [style=none] (20) at (-6, 2.75) {};
		\node [style=none] (21) at (-6, 0) {};
		\node [style=none] (22) at (-5.5, -0.5) {};
		\node [style=none] (23) at (-7, 1) {$m_\mathcal V$};
		\node [style=none] (24) at (-7, -2.75) {$FF$};
		\node [style=squared] (25) at (2.75, -2) {$[1,1s^{-1}]$};
		\node [style=squared] (26) at (6.25, 0.75) {$[1,s^{-1}]$};
		\node [style=none] (27) at (-0.75, -5.25) {};
		\node [style=none] (28) at (0.25, -3) {\rotatebox{5}{\tiny $[1,1\eta 1]$}};
		\node [style=none] (29) at (2.75, -0.75) {{\tiny $[1,1\varepsilon]$}};
		\node [style=none] (31) at (2, 0.5) {};
		\node [style=none] (32) at (-1.75, -0.25) {\rotatebox{40}{\tiny $[1,[1,m]11]$}};
		\node [style=none] (33) at (-0.5, -1.25) {\rotatebox{30}{\tiny $[1,1[1,m]1]$}};
		\node [style=none] (34) at (1, 2.75) {};
		\node [style=none] (35) at (2.25, 3.5) {};
		\node [style=none] (36) at (1.25, -4.25) {};
		\node [style=none] (37) at (2.25, 3.5) {};
		\node [style=fixed size node] (38) at (9.25, -0.5) {\adjustbox{valign=m, max width=.6cm, max height=.6cm}{$[1,\alpha^{-1}]$}};
		\node [style=none] (39) at (0.25, 1.75) {\rotatebox{40}{\tiny $[1,[1,m]1]$}};
		\node [style=none] (40) at (2.25, 0.9) {\tiny $[1,1m]$};
		\node [style=none] (41) at (10.75, 0.75) {};
		\node [style=none] (42) at (11, -3.5) {};
		\node [style=none] (43) at (11, -1.75) {};
		\node [style=none] (44) at (6, -5.25) {};
		\node [style=none] (45) at (1.25, -4.5) {\tiny $[1,\eta11]$};
		\node [style=none] (46) at (6.5, -0.25) {\tiny $[1,\eta1]$};
		\node [style=none] (47) at (7, -2) {\tiny $[1,1m]$};
		\node [style=none] (48) at (4.25, 1.75) {\tiny $[1,\varepsilon]$};
		\node [style=none] (49) at (8.25, 2.25) {\tiny $[1,m]$};
		\node [style=none] (50) at (11.5, 0.75) {$[1,m]$};
		\node [style=none] (51) at (11.5, -1.75) {$\eta$};
		\node [style=none] (52) at (11.5, -3.5) {$m$};
		\node [style=none] (53) at (11.75, -2.25) {};
		\node [style=none] (54) at (12.25, -1.75) {};
		\node [style=none] (55) at (12.25, 0.75) {};
		\node [style=none] (56) at (11.75, 1.25) {};
		\node [style=none] (57) at (12.75, -0.5) {$F$};
		\node [style=none] (58) at (9, -2) {\tiny $[1,m1]$};
		
		\draw [in=90, out=-180, looseness=1.25] (19.center) to (20.center);
		\draw (20.center) to (21.center);
		\draw [in=180, out=-90, looseness=1.25] (21.center) to (22.center);
		\draw [in=-270, out=180, looseness=1.25] (22.center) to (16.center);
		\draw (16.center) to (17.center);
		\draw [in=180, out=-90, looseness=1.25] (17.center) to (18.center);
		\draw [in=-135, out=0, looseness=0.75] (11.center) to (25);
		\draw [in=-180, out=0, looseness=0.50] (3.center) to (27.center);
		\draw [in=0, out=120, looseness=0.75] (25) to (2.center);
		\draw [in=0, out=-180, looseness=0.50] (31.center) to (5.center);
		\draw [in=180, out=0] (1.center) to (34.center);
		\draw [in=120, out=0, looseness=0.75] (34.center) to (26);
		\draw [in=180, out=0] (12.center) to (36.center);
		\draw [in=-105, out=0] (36.center) to (26);
		\draw [in=-180, out=0, looseness=0.75] (4.center) to (35.center);
		\draw [in=105, out=0] (37.center) to (38);
		\draw [in=0, out=-120] (38) to (31.center);
		\draw [in=-75, out=-180, looseness=0.75] (42.center) to (38);
		\draw [in=180, out=75] (38) to (41.center);
		\draw (27.center) to (44.center);
		\draw [in=-180, out=0, looseness=0.75] (44.center) to (43.center);
		\draw [in=90, out=0, looseness=1.25] (56.center) to (55.center);
		\draw (55.center) to (54.center);
		\draw [in=0, out=-90, looseness=1.25] (54.center) to (53.center);
\end{tikzpicture}
\end{center}
\end{minipage}

\noindent
and
\begin{center}
\begin{tikzpicture}[scale=.5]
		\node [style=none] (0) at (-4.25, -2.25) {$u_\mathcal V=\eta$};
		\node [style=none] (1) at (-3.25, -2.25) {};
		\node [style=fixed size node] (2) at (-2.25, 0.25) {\adjustbox{valign=m, max width=.6cm, max height=.6cm}{$[1,\lambda^{-1}]$}};
		\node [style=none] (3) at (3, -1.5) {};
		\node [style=none] (4) at (3, 1.5) {};
		\node [style=none] (5) at (3, 0) {};
		\node [style=none] (6) at (4, 1.5) {$[1,m]$};
		\node [style=none] (7) at (3.5, 0) {$\eta$};
		\node [style=none] (8) at (3.5, -1.5) {$u$};
		\node [style=none] (9) at (-2.25, -1) {$[1,u1]$};
		\node [style=none] (10) at (4.25, 2) {};
		\node [style=none] (11) at (4.75, 1.5) {};
		\node [style=none] (12) at (4.75, 0) {};
		\node [style=none] (13) at (4.25, -0.5) {};
		\node [style=none] (14) at (5.25, 0.75) {$F$};

		\draw [in=0, out=180] (5.center) to (1.center);
		\draw [in=-180, out=-60, looseness=0.75] (2) to (3.center);
		\draw [in=-180, out=60, looseness=0.75] (2) to (4.center);
		\draw [in=270, out=0, looseness=1.25] (13.center) to (12.center);
		\draw (12.center) to (11.center);
		\draw [in=360, out=90, looseness=1.25] (11.center) to (10.center);
\end{tikzpicture}
\end{center}
Now, start for example by considering the first axiom for a $\cat V$-pseudofunctor \eqref{psfun ax1}. In the right-hand side we have, in the dashed regions below, $\alpha$ for the $\cat V$-bicategory structure of $\cat V$ (Definition \ref{def V as V bicat}) and the crossing defined in \eqref{fun hom}.
\begin{center}
\begin{tikzpicture}[scale=.45,xscale=.65]
		\node [style=none] (0) at (-4.25, 10.75) {};
		\node [style=none] (1) at (14.75, 2) {};
		\node [style=none] (2) at (14.25, 0.25) {};
		\node [style=none] (3) at (14.75, -1) {};
		\node [style=none] (4) at (14.75, -2.25) {};
		\node [style=none] (5) at (6.25, 4) {\tiny $\eta$};
		\node [style=none] (6) at (13.75, -0.75) {\tiny $[1,m]1$};
		\node [style=none] (7) at (14, -2.75) {\tiny $1[1,m]$};
		\node [style=none] (8) at (14.75, -3.75) {};
		\node [style=none] (9) at (14.75, -5) {};
		\node [style=none] (10) at (14, -4) {\tiny $1\eta$};
		\node [style=none] (11) at (14.25, -5.5) {\tiny $\eta 1$};
		\node [style=squared, fill=gray!50] (12) at (21.5, -2) {$[1,1s^{-1}]$};
		\node [style=squared] (13) at (25, 0.75) {$[1,s^{-1}]$};
		\node [style=none] (14) at (18, -5.25) {};
		\node [style=none] (15) at (19, -3) {\rotatebox{5}{\tiny $[1,1\eta1]$}};
		\node [style=none] (16) at (21.5, -0.75) {{\tiny $[1,1\varepsilon]$}};
		\node [style=none] (17) at (20.75, 0.5) {};
		\node [style=none] (18) at (17, -0.25) {\rotatebox{40}{\tiny $[1,[1,m]11]$}};
		\node [style=none] (19) at (18.25, -1.25) {\rotatebox{30}{\tiny $[1,1[1,m]1]$}};
		\node [style=none] (20) at (18.5, 3.25) {};
		\node [style=none] (21) at (21, 3.5) {};
		\node [style=none] (22) at (20, -4.25) {};
		\node [style=none] (23) at (21, 3.5) {};
		\node [style=fixed size node] (24) at (28, -0.5) {\adjustbox{valign=m, max width=.6cm, max height=.6cm}{$[1,\alpha^{-1}]$}};
		\node [style=none] (25) at (19, 1.75) {\rotatebox{50}{\tiny $[1,[1,m]1]$}};
		\node [style=none] (26) at (20.75, 1) {\tiny $[1,1m]$};
		\node [style=none] (27) at (29.5, 0.75) {};
		\node [style=none] (28) at (29.75, -3.5) {};
		\node [style=none] (29) at (29.75, -2) {};
		\node [style=none] (30) at (26.5, -5.25) {};
		\node [style=none] (31) at (20, -4.5) {\tiny $[1,\eta11]$};
		\node [style=none] (32) at (25.25, -0.25) {\tiny $[1,\eta1]$};
		\node [style=none] (33) at (26.25, -2.25) {\tiny $[1,1m]$};
		\node [style=none] (34) at (23, 1.75) {\tiny $[1,\varepsilon]$};
		\node [style=none] (35) at (26.75, 2.25) {\tiny $[1,m]$};
		\node [style=none] (36) at (30.25, 0.75) {$[1,m]$};
		\node [style=none] (37) at (30.25, -2) {$\eta$};
		\node [style=none] (38) at (30.25, -3.5) {$m$};
		\node [style=none] (39) at (31, -2.5) {};
		\node [style=none] (40) at (31.5, -2) {};
		\node [style=none] (41) at (31.5, 0.75) {};
		\node [style=none] (42) at (31, 1.25) {};
		\node [style=none] (43) at (32, -0.5) {$F$};
		\node [style=none] (44) at (27.8, -2) {\tiny $[1,m1]$};
		\node [style=none] (45) at (-7.25, 1.5) {};
		\node [style=none] (47) at (-8.75, -1.5) {};
		\node [style=none] (48) at (-7.25, -2.75) {};
		\node [style=none] (49) at (-7.25, -4.25) {};
		\node [style=none] (50) at (-7.25, -5.5) {};
		\node [style=squared] (51) at (-0.5, -2.5) {$1[1,1s^{-1}]$};
		\node [style=squared] (52) at (3, 0.25) {$1[1,s^{-1}]$};
		\node [style=none] (53) at (-0.5, -5.75) {};
		\node [style=none] (54) at (-3, -3.5) {\rotatebox{5}{\tiny $1[1,1\eta1]$}};
		\node [style=none] (55) at (-0.5, -1.25) {{\tiny $1[1,1\varepsilon]$}};
		\node [style=none] (56) at (-1.25, 0) {};
		\node [style=none] (57) at (-5.75, -1) {\rotatebox{40}{\tiny $1[1,[1,m]11]$}};
		\node [style=none] (58) at (-4.5, -2) {\rotatebox{30}{\tiny $1[1,1[1,m]1]$}};
		\node [style=none] (59) at (-1.75, 2.25) {};
		\node [style=none] (60) at (-1, 3) {};
		\node [style=none] (61) at (-2, -4.75) {};
		\node [style=none] (62) at (-1, 3) {};
		\node [style=fixed size node] (63) at (6, -1) {\adjustbox{valign=m, max width=.6cm, max height=.6cm}{$1[1,\alpha^{-1}]$}};
		\node [style=none] (64) at (-3, 1.25) {\rotatebox{40}{\tiny $1[1,[1,m]1]$}};
		\node [style=none] (65) at (-1.25, 0.5) {\tiny $1[1,1m]$};
		\node [style=none] (66) at (8.25, 0) {};
		\node [style=none] (67) at (8.5, -4.25) {};
		\node [style=none] (68) at (7.75, -2.5) {};
		\node [style=none] (69) at (4.5, -5.75) {};
		\node [style=none] (70) at (0, -4.75) {\tiny $1[1,\eta11]$};
		\node [style=none] (71) at (3.25, -0.75) {\tiny $1[1,\eta1]$};
		\node [style=none] (72) at (4.25, -2.75) {\tiny $1[1,1m]$};
		\node [style=none] (73) at (1, 1.25) {\tiny $1[1,\varepsilon]$};
		\node [style=none] (74) at (4.75, 2) {\tiny $1[1,m]$};
		\node [style=none] (75) at (6.25, -2.25) {\tiny $1[1,m1]$};
		\node [style=none] (76) at (-7.25, -7) {};
		\node [style=none] (77) at (-7.25, -8.25) {};
		\node [style=none] (78) at (8.25, 0.25) {\tiny $1[1,m]$};
		\node [style=none] (79) at (8.5, -2.25) {\tiny $1\eta$};
		\node [style=none] (80) at (7.5, -4.25) {\tiny $1m$};
		\node [style=none] (81) at (6.75, -7) {};
		\node [style=none] (82) at (8.5, -8.25) {};
		\node [style=none] (83) at (12, -8) {};
		\node [style=none] (84) at (14.75, -1) {};
		\node [style=none] (85) at (14.75, -5) {};
		\node [style=none] (86) at (11, -4.75) {\rotatebox{40}{\tiny $[1,m]1$}};
		\node [style=none] (87) at (14.75, -3.75) {};
		\node [style=none] (88) at (14.5, -8.25) {\tiny $1m$};
		\node [style=none] (89) at (11.25, -2.25) {\tiny $[1,m]1$};
		\node [style=none] (90) at (29.75, -8) {};
		\node [style=none] (91) at (30.25, -8) {$1m$};
		\node [style=none] (92) at (-23.75, 10.75) {$[1,\varepsilon]$};
		\node [style=none] (93) at (-24, 8.25) {$[1,1\varepsilon]$};
		\node [style=none] (94) at (-23.5, 5.5) {$\eta$};
		\node [style=none] (95) at (-24, 4.25) {$[1,\varepsilon]1$};
		\node [style=none] (96) at (-24, 2.25) {$[1,1\varepsilon]1$};
		\node [style=none] (97) at (-23.75, 0.25) {$\eta 1$};
		\node [style=none] (98) at (-23, 10.75) {};
		\node [style=none] (99) at (-23, 8.25) {};
		\node [style=none] (100) at (-23, 5.5) {};
		\node [style=none] (101) at (-23, 4.25) {};
		\node [style=none] (102) at (-22.75, 2.25) {};
		\node [style=none] (103) at (-23, 0.25) {};
		\node [style=none] (104) at (-3.5, 11.25) {\tiny $[1,\varepsilon]$};
		\node [style=none] (105) at (-3.25, 8.75) {\tiny $[1,1\varepsilon]$};
		\node [style=none] (106) at (-4.5, 3.75) {\tiny $1[1,\varepsilon]$};
		\node [style=none] (107) at (-6.25, 2) {\tiny $1[1,1\varepsilon]$};
		\node [style=none] (108) at (-10, -0.5) {\tiny $1\eta $};
		\node [style=none] (109) at (-4.25, 10.75) {};
		\node [style=none] (110) at (-4.25, 8.25) {};
		\node [style=none] (111) at (-4.25, 6) {};
		\node [style=none] (112) at (-1.75, 2.25) {};
		\node [style=none] (113) at (-7.25, 1.5) {};
		\node [style=squared, fill=gray!50] (115) at (-12.75, 2) {$[1,1s]$};
		\node [style=squared] (116) at (-12.25, 7) {$[1,s^{-1}]$};
		\node [style=none] (117) at (-12.25, -0.25) {};
		\node [style=none] (118) at (-16, 9.5) {};
		\node [style=none] (119) at (-11.25, 9) {};
		\node [style=none] (120) at (-20.25, 4.25) {\tiny $\eta$};
		\node [style=none] (121) at (-18.25, 2.5) {\tiny $\eta$};
		\node [style=none] (122) at (-12.25, -0.75) {\tiny $\eta$};
		\node [style=none] (123) at (-9.5, 0.5) {\tiny $\eta$};
		\node [style=none] (124) at (-7, 4) {\tiny $\eta$};
		\node [style=none] (125) at (-12.5, 0.75) {\tiny $[1,1\eta 1]$};
		\node [style=none] (126) at (-13, 3) {\tiny $[1,1\varepsilon]$};
		\node [style=none] (127) at (-10, 6.25) {\rotatebox{30}{\tiny $[1,1\varepsilon]$}};
		\node [style=none] (128) at (-11.25, 9.25) {\tiny $[1,1\varepsilon]$};
		\node [style=none] (129) at (-10, 3) {\rotatebox{-30}{\tiny \tiny $[1,1[1,1\varepsilon]1]$}};
		\node [style=none] (130) at (-15, 2.75) {\rotatebox{45}{\tiny $[1,\eta11]$}};
		\node [style=none] (131) at (-20.25, 6.5) {\rotatebox{45}{\tiny $[1,[1,\varepsilon]11]$}};
		\node [style=none] (132) at (-12.75, 4.25) {\rotatebox{-30}{\tiny $[1,11\varepsilon]$}};
		\node [style=none] (133) at (-14.75, 5.5) {\rotatebox{-30}{\tiny $[1,1\varepsilon]$}};
		\node [style=none] (134) at (-17.25, 6.75) {\rotatebox{-25}{\tiny $[1,1\varepsilon]$}};
		\node [style=none] (135) at (-17.25, 4.75) {\rotatebox{45}{\tiny $[1,[1,1\varepsilon]11]$}};
		\node [style=none] (136) at (-12, 8) {\tiny $\tiny [1,\varepsilon1]$};
		\node [style=none] (137) at (-12.5, 5.75) {\tiny $[1,\eta1]$};
		\node [style=none] (138) at (-15.5, 7.75) {\rotatebox{45}{\tiny $[1,[1,1\varepsilon]1]$}};
		\node [style=none] (139) at (-6.75, 6.5) {\rotatebox{-50}{\tiny $[1, 1[1,1\varepsilon]1]$}};
		\node [style=none] (140) at (-17.75, 8.75) {\rotatebox{45}{\tiny $[1,[1,\varepsilon]1]$}};
		\node [style=none] (141) at (-14, 9.25) {\rotatebox{-30}{\tiny $[1,\varepsilon]$}};
		\node [style=none] (142) at (-22.25, -1.5) {};
		\node [style=none] (143) at (-22.25, -2.75) {};
		\node [style=none] (144) at (-22.5, -4.25) {};
		\node [style=none] (145) at (-22.5, -5.5) {};
		\node [style=none] (146) at (-23.75, -1.5) {$1[1,m]1$};
		\node [style=none] (147) at (-23.75, -2.75) {$11[1,m]$};
		\node [style=none] (148) at (-23.25, -4.25) {$11\eta$};
		\node [style=none] (149) at (-23.25, -5.5) {$1\eta 1$};
		\node [style=none] (150) at (-22.5, -7) {};
		\node [style=none] (151) at (-22.5, -8.25) {};
		\node [style=none] (152) at (-23.25, -8.25) {$\eta 11$};
		\node [style=none] (153) at (-23.75, -7) {$[1,m]11$};
		\node [style=none] (154) at (-25.5, -7) {};
		\node [style=none] (155) at (-25, -6.5) {};
		\node [style=none] (156) at (-25.5, -8.25) {};
		\node [style=none] (157) at (-25, -8.75) {};
		\node [style=none] (158) at (-25.5, -5.75) {};
		\node [style=none] (159) at (-26.25, -7.5) {$F11$};
		\node [style=none] (160) at (-25.5, -1.5) {};
		\node [style=none] (161) at (-25, -1) {};
		\node [style=none] (162) at (-26.5, -3.5) {$1FF$};
		\node [style=none] (163) at (-25, -0.5) {};
		\node [style=none] (164) at (-25.5, 0) {};
		\node [style=none] (165) at (-25.5, 4.25) {};
		\node [style=none] (166) at (-25, 4.75) {};
		\node [style=none] (167) at (-25, 5) {};
		\node [style=none] (168) at (-25.5, 5.5) {};
		\node [style=none] (169) at (-25.5, 10.75) {};
		\node [style=none] (170) at (-25, 11.25) {};
		\node [style=none] (171) at (-26.5, 8.25) {$m_\mathcal V$};
		\node [style=none] (172) at (-26.5, 2.25) {$m_\mathcal V 1$};
		\node [style=none] (173) at (-25, -6.25) {};
		\node [style=none] (174) at (-2.25, 5.25) {};
		\node [style=none] (175) at (-2.75, 3.5) {};
		\node [style=none] (176) at (-10.25, -1) {};
		\node [style=none] (177) at (-11.5, -1) {};
		\node [style=none] (178) at (-19, -1) {};
		\node [style=none] (179) at (-21.5, 1.5) {};
		\node [style=none] (180) at (-21.5, 10) {};
		\node [style=none] (181) at (-19.5, 12) {};
		\node [style=none] (182) at (-4, 12) {};
		\node [style=none] (183) at (-2.25, 10.5) {};
		\node [style=none] (184) at (-13, -4) {};
		\node [style=none] (185) at (-9.25, -6.5) {};
		\node [style=none] (186) at (2.75, -6.5) {};
		\node [style=none] (187) at (9, -3.25) {};
		\node [style=none] (188) at (8.25, 1.25) {};
		\node [style=none] (189) at (1.75, 4.25) {};
		\node [style=none] (190) at (6.75, -7.25) {\tiny $[1,m]11$};
		\node [style=none] (191) at (7, -8.5) {\tiny $\eta 11$};
		\node [style=none] (192) at (19, 4.75) {};
		\node [style=none] (193) at (13.25, 1.5) {};
		\node [style=none] (194) at (13.25, -5.25) {};
		\node [style=none] (195) at (16.75, -7) {};
		\node [style=none] (196) at (27.5, -7) {};
		\node [style=none] (197) at (29.75, -4.75) {};
		\node [style=none] (198) at (29.5, 1.5) {};

		\draw [in=-135, out=0, looseness=0.75] (8.center) to (12);
		\draw [in=-180, out=-15, looseness=0.50] (2.center) to (14.center);
		\draw [in=-15, out=120, looseness=0.75] (12) to (1.center);
		\draw [in=0, out=-180, looseness=0.50] (17.center) to (4.center);
		\draw [in=180, out=0] (0.center) to (20.center);
		\draw [in=120, out=0, looseness=0.75] (20.center) to (13);
		\draw [in=180, out=0] (9.center) to (22.center);
		\draw [in=-105, out=0] (22.center) to (13);
		\draw [in=-180, out=0, looseness=0.75] (3.center) to (21.center);
		\draw [in=105, out=0] (23.center) to (24);
		\draw [in=0, out=-120] (24) to (17.center);
		\draw [in=-75, out=-180, looseness=0.75] (28.center) to (24);
		\draw [in=180, out=75] (24) to (27.center);
		\draw [in=-180, out=0, looseness=0.75] (30.center) to (29.center);
		\draw [in=90, out=0, looseness=1.25] (42.center) to (41.center);
		\draw (41.center) to (40.center);
		\draw [in=0, out=-90, looseness=1.25] (40.center) to (39.center);
		\draw [in=360, out=180] (30.center) to (14.center);
		\draw [in=-135, out=0, looseness=0.75] (49.center) to (51);
		\draw [in=0, out=120, looseness=0.75] (51) to (45.center);
		\draw [in=0, out=-180, looseness=0.50] (56.center) to (48.center);
		\draw [in=120, out=0, looseness=0.75] (59.center) to (52);
		\draw [in=180, out=0] (50.center) to (61.center);
		\draw [in=-105, out=0] (61.center) to (52);
		\draw [in=-180, out=0, looseness=0.75] (47.center) to (60.center);
		\draw [in=105, out=0] (62.center) to (63);
		\draw [in=0, out=-120] (63) to (56.center);
		\draw [in=-75, out=150, looseness=0.75] (67.center) to (63);
		\draw [in=180, out=75] (63) to (66.center);
		\draw [in=-180, out=0, looseness=0.75] (69.center) to (68.center);
		\draw [in=360, out=180] (69.center) to (53.center);
		\draw (81.center) to (76.center);
		\draw (77.center) to (82.center);
		\draw [in=330, out=-180] (83.center) to (67.center);
		\draw [in=180, out=0] (81.center) to (84.center);
		\draw [in=0, out=-180, looseness=0.75] (85.center) to (82.center);
		\draw [in=-180, out=0, looseness=0.75] (68.center) to (87.center);
		\draw [in=0, out=180] (4.center) to (66.center);
		\draw [in=360, out=180] (90.center) to (83.center);
		\draw [in=-180, out=75, looseness=0.75] (115) to (110.center);
		\draw [in=-180, out=0, looseness=0.75] (99.center) to (113.center);
		\draw [in=0, out=-180] (117.center) to (100.center);
		\draw [in=-180, out=0] (117.center) to (111.center);
		\draw [in=0, out=-150, looseness=0.75] (118.center) to (101.center);
		\draw [in=0, out=-120] (116) to (103.center);
		\draw [in=0, out=180] (119.center) to (102.center);
		\draw [in=-180, out=0] (119.center) to (112.center);
		\draw [in=180, out=30, looseness=0.50] (118.center) to (109.center);
		\draw [in=120, out=0, looseness=0.75] (98.center) to (116);
		\draw [in=0, out=165] (1.center) to (110.center);
		\draw [in=360, out=165] (2.center) to (111.center);
		\draw (47.center) to (142.center);
		\draw (143.center) to (48.center);
		\draw (49.center) to (144.center);
		\draw (145.center) to (50.center);
		\draw (76.center) to (150.center);
		\draw (151.center) to (77.center);
		\draw [in=270, out=180] (157.center) to (156.center);
		\draw (156.center) to (154.center);
		\draw [in=180, out=90] (154.center) to (155.center);
		\draw [in=90, out=-180] (161.center) to (160.center);
		\draw (160.center) to (158.center);
		\draw [in=90, out=-180, looseness=1.25] (166.center) to (165.center);
		\draw (165.center) to (164.center);
		\draw [in=180, out=-90, looseness=1.25] (164.center) to (163.center);
		\draw [in=90, out=-180] (170.center) to (169.center);
		\draw (169.center) to (168.center);
		\draw [in=180, out=-90] (168.center) to (167.center);
		\draw [in=180, out=-90, looseness=1.25] (158.center) to (173.center);
		\draw [in=-75, out=180, looseness=0.50] (53.center) to (115);
		\draw [style=dashed] (181.center) to (182.center);
		\draw [style=dashed, in=90, out=0] (182.center) to (183.center);
		\draw [style=dashed] (183.center) to (174.center);
		\draw [style=dashed, in=30, out=-90] (174.center) to (175.center);
		\draw [style=dashed, in=0, out=-150] (175.center) to (176.center);
		\draw [style=dashed, in=0, out=-180] (176.center) to (177.center);
		\draw [style=dashed] (177.center) to (178.center);
		\draw [style=dashed, in=-90, out=-180] (178.center) to (179.center);
		\draw [style=dashed] (179.center) to (180.center);
		\draw [style=dashed, in=180, out=90] (180.center) to (181.center);
		\draw [style=dashed, in=90, out=180, looseness=1.25] (176.center) to (184.center);
		\draw [style=dashed, in=-180, out=-90, looseness=1.50] (184.center) to (185.center);
		\draw [style=dashed, in=180, out=0] (185.center) to (186.center);
		\draw [style=dashed, in=-120, out=0, looseness=1.50] (186.center) to (187.center);
		\draw [style=dashed, in=-45, out=60, looseness=1.25] (187.center) to (188.center);
		\draw [style=dashed, in=-15, out=135, looseness=0.75] (188.center) to (189.center);
		\draw [style=dashed, in=30, out=165] (189.center) to (175.center);
		\draw [style=dashed, in=360, out=90, looseness=0.75] (198.center) to (192.center);
		\draw [style=dashed, in=90, out=-180, looseness=1.25] (192.center) to (193.center);
		\draw [style=dashed] (193.center) to (194.center);
		\draw [style=dashed, in=180, out=-90] (194.center) to (195.center);
		\draw [style=dashed] (195.center) to (196.center);
		\draw [style=dashed, in=270, out=0, looseness=1.25] (196.center) to (197.center);
		\draw [style=dashed, in=270, out=90] (197.center) to (198.center);
\end{tikzpicture}
\end{center}
If we stare at the morphism above, we can argue that up to adjustments of naturality and modification property, we can cancel out the two 2-cells highlighted in light gray. Now let us first focus on the squared 2-cells, coming from the structure of pseudoadjunction. Once we remove the highlighted ones, we are able to observe how these 2-cells are in bijective correspondence with the analogous 2-cells in the left-hand side of \eqref{psfun ax1} developed below. Most importantly, we can make this correspondence consistent with all the strings involved. This means that with no further application of the swallowtail equations we can argue how the usual naturality and modification steps bring one to the other.

Let us expand such left-hand side, where we recognize the same two crossings \eqref{fun hom}, followed by $\alpha$ for $\cat C$:
\begin{center}
\begin{tikzpicture}[scale=.45,xscale=.7]
		\node [style=none] (0) at (-41.5, 10) {$[1,\varepsilon]$};
		\node [style=none] (1) at (-41.75, 8.75) {$[1,1\varepsilon]$};
		\node [style=none] (2) at (-41.5, 7.25) {$\eta$};
		\node [style=none] (3) at (-41.75, 6) {$[1,\varepsilon]1$};
		\node [style=none] (4) at (-41.7, 3.75) {$[1,1\varepsilon]1$};
		\node [style=none] (5) at (-41.5, 2) {$\eta 1$};
		\node [style=none] (6) at (-33.5, 8.75) {};
		\node [style=none] (7) at (-33.75, 7.25) {};
		\node [style=none] (8) at (-33.5, 6) {};
		\node [style=none] (9) at (-33.5, 3.75) {};
		\node [style=none] (10) at (-34, 2) {};
		\node [style=none] (11) at (-33.5, 0.25) {};
		\node [style=none] (12) at (-33.5, -1) {};
		\node [style=none] (13) at (-33.5, -2.5) {};
		\node [style=none] (15) at (-35, 0.5) {\rotatebox{55}{\tiny $[1,m]11$}};
		\node [style=none] (16) at (-33.5, -1.5) {\tiny $1[1,m]1$};
		\node [style=none] (17) at (-33.5, -2.75) {\tiny $1\eta1$};
		\node [style=none] (18) at (-33.5, -4) {\tiny $\eta 11$};
		\node [style=none] (19) at (-33.5, -5.25) {};
		\node [style=none] (20) at (-33.5, -6.5) {};
		\node [style=none] (21) at (-33.5, -7) {\tiny $11\eta $};
		\node [style=none] (22) at (-33.5, -5.75) {\tiny $11[1,m]$};
		\node [style=none] (23) at (-42.5, 1.25) {};
		\node [style=none] (24) at (-43, 1.75) {};
		\node [style=none] (25) at (-43, 6) {};
		\node [style=none] (26) at (-42.5, 6.5) {};
		\node [style=none] (27) at (-42.5, 6.75) {};
		\node [style=none] (28) at (-43, 7.25) {};
		\node [style=none] (29) at (-43, 10) {};
		\node [style=none] (30) at (-42.5, 10.5) {};
		\node [style=none] (31) at (-44, 8.75) {$m_\mathcal V$};
		\node [style=none] (32) at (-44, 4) {$m_\mathcal V 1$};
		\node [style=none] (33) at (-31.75, 6) {};
		\node [style=none] (34) at (-33.5, 3.75) {};
		\node [style=none] (35) at (-34, 2) {};
		\node [style=none] (36) at (-29.25, 5.25) {};
		\node [style=none] (37) at (-33.5, -1) {};
		\node [style=none] (38) at (-33.5, -2.5) {};
		\node [style=none] (39) at (-28.25, -3) {};
		\node [style=squared] (40) at (-26.75, -0.75) {$[1,1s^{-1}]1$};
		\node [style=squared] (41) at (-23.25, 2) {$[1,s^{-1}]1$};
		\node [style=none] (42) at (-30.25, -4) {};
		\node [style=none] (43) at (-29.25, -1.75) {\rotatebox{5}{\tiny $[1,1\eta1]1$}};
		\node [style=none] (44) at (-26.75, 0.5) {{\tiny $[1,1\varepsilon]1$}};
		\node [style=none] (45) at (-27.5, 1.75) {};
		\node [style=none] (46) at (-32.5, 2.5) {\rotatebox{50}{\tiny $[1,[1,m]11]1$}};
		\node [style=none] (47) at (-30, 0) {\rotatebox{30}{\tiny $[1,1[1,m]1]1$}};
		\node [style=none] (48) at (-27, 4.25) {};
		\node [style=none] (49) at (-28.25, -3) {};
		\node [style=none] (50) at (-29.25, 5.25) {};
		\node [style=fixed size node] (51) at (-20.25, 0.75) {\adjustbox{valign=m, max width=.6cm, max height=.6cm}{\tiny $[1,\alpha^{-1}]1$}};
		\node [style=none] (52) at (-32, 4.5) {\rotatebox{40}{\tiny $[1,[1,m]1]1$}};
		\node [style=none] (53) at (-27.5, 2.25) {\tiny $[1,1m]1$};
		\node [style=none] (54) at (-13.75, 1.5) {};
		\node [style=none] (55) at (-18, -2.75) {};
		\node [style=none] (56) at (-18, -0.75) {};
		\node [style=none] (57) at (-23.5, -4) {};
		\node [style=none] (58) at (-28.25, -3.25) {\tiny $[1,\eta11]1$};
		\node [style=none] (59) at (-23, 1) {\tiny $[1,\eta1]1$};
		\node [style=none] (60) at (-22, -1) {\tiny $[1,1m]1$};
		\node [style=none] (61) at (-25.25, 3) {\tiny $[1,\varepsilon]1$};
		\node [style=none] (62) at (-21, 3.5) {\tiny $[1,m]1$};
		\node [style=none] (63) at (-17.5, -0.5) {\tiny $\eta1$};
		\node [style=none] (64) at (-18, -2.25) {\tiny $m1$};
		\node [style=none] (65) at (-20, -0.5) {\tiny $[1,m1]1$};
		\node [style=none] (66) at (-20.25, -5.25) {};
		\node [style=none] (67) at (-19, -6.5) {};
		\node [style=none] (68) at (-13.25, -5.5) {};
		\node [style=none] (69) at (-13.75, 0.25) {};
		\node [style=none] (70) at (-13.75, -1.25) {};
		\node [style=none] (71) at (-15.75, -3.5) {\tiny $1\eta$};
		\node [style=none] (72) at (-13.25, -6) {\tiny $m1$};
		\node [style=none] (73) at (-33.5, 10) {};
		\node [style=none] (74) at (-13.75, 4) {};
		\node [style=none] (75) at (-14.75, 3) {};
		\node [style=none] (76) at (-13.75, 1.5) {};
		\node [style=none] (77) at (-13.75, 0.25) {};
		\node [style=none] (78) at (-16.75, 2.5) {\tiny $[1,m]1$};
		\node [style=none] (79) at (-14.5, 0.5) {\tiny $1[1,m]$};
		\node [style=none] (80) at (-13.75, -1.25) {};
		\node [style=none] (81) at (-11.5, -3.25) {};
		\node [style=none] (82) at (-14.5, -2.75) {\tiny $\eta 1$};
		\node [style=squared] (83) at (-7, 0.5) {$[1,1s^{-1}]$};
		\node [style=squared] (84) at (-3.5, 3.25) {$[1,s^{-1}]$};
		\node [style=none] (85) at (-10.75, -1.75) {};
		\node [style=none] (86) at (-9.5, -0.5) {\rotatebox{5}{\tiny $[1,1\eta1]$}};
		\node [style=none] (87) at (-7, 1.75) {{\tiny $[1,1\varepsilon]$}};
		\node [style=none] (88) at (-7.75, 3) {};
		\node [style=none] (89) at (-11.5, 2.25) {\rotatebox{40}{\tiny $[1,[1,m]11]$}};
		\node [style=none] (90) at (-10.25, 1.25) {\rotatebox{30}{\tiny $[1,1[1,m]1]$}};
		\node [style=none] (91) at (-8.75, 5.25) {};
		\node [style=none] (92) at (-7.5, 6) {};
		\node [style=none] (94) at (-7.5, 6) {};
		\node [style=fixed size node] (95) at (-0.5, 2) {\adjustbox{valign=m, max width=.6cm, max height=.6cm}{\tiny $[1,\alpha^{-1}]$}};
		\node [style=none] (96) at (-9.5, 4.25) {\rotatebox{40}{\tiny $[1,[1,m]1]$}};
		\node [style=none] (97) at (-7.75, 3.5) {\tiny $[1,1m]$};
		\node [style=none] (98) at (5.25, 4) {};
		\node [style=none] (99) at (5.25, 0.75) {};
		\node [style=none] (100) at (-3.75, -2.75) {};
		\node [style=none] (101) at (-7, -1.25) {\tiny $[1,\eta11]$};
		\node [style=none] (102) at (-3.25, 2.25) {\tiny $[1,\eta1]$};
		\node [style=none] (103) at (-2.25, 0.25) {\tiny $[1,1m]$};
		\node [style=none] (104) at (-5.5, 4.25) {\tiny $[1,\varepsilon]$};
		\node [style=none] (105) at (-1.75, 5) {\tiny $[1,m]$};
		\node [style=none] (106) at (6, 4) {$[1,m]$};
		\node [style=none] (107) at (5.75, 0.75) {$\eta$};
		\node [style=none] (108) at (2, -1.75) {\tiny $m$};
		\node [style=none] (109) at (1.25, 0.75) {\tiny $[1,m1]$};
		\node [style=fixed size node] (110) at (3.5, -2.75) {$\alpha$};
		\node [style=none] (111) at (-0.5, -5.5) {};
		\node [style=none] (112) at (5.25, -1.5) {};
		\node [style=none] (113) at (5.25, -4) {};
		\node [style=none] (114) at (5.75, -1.5) {$m$};
		\node [style=none] (115) at (5.75, -4) {$1m$};
		\node [style=none] (116) at (-40, 0.25) {};
		\node [style=none] (117) at (-40, -1) {};
		\node [style=none] (118) at (-40.25, -2.5) {};
		\node [style=none] (119) at (-40.25, -3.75) {};
		\node [style=none] (120) at (-41.5, 0.25) {$1[1,m]1$};
		\node [style=none] (121) at (-41.5, -1) {$11[1,m]$};
		\node [style=none] (122) at (-41, -2.5) {$11\eta$};
		\node [style=none] (123) at (-41, -3.75) {$1\eta 1$};
		\node [style=none] (124) at (-40.25, -5.25) {};
		\node [style=none] (125) at (-40.25, -6.5) {};
		\node [style=none] (126) at (-41, -6.5) {$\eta 11$};
		\node [style=none] (127) at (-41.25, -5.25) {[1,m]11};
		\node [style=none] (128) at (-40.5, 2) {};
		\node [style=none] (129) at (-40.25, 3.75) {};
		\node [style=none] (130) at (-40.25, 6) {};
		\node [style=none] (131) at (-40.5, 7.25) {};
		\node [style=none] (132) at (-40.5, 8.75) {};
		\node [style=none] (133) at (-40.5, 10) {};
		\node [style=none] (134) at (-42.25, 10.5) {};
		\node [style=none] (135) at (-43, -5.25) {};
		\node [style=none] (136) at (-42.5, -4.75) {};
		\node [style=none] (137) at (-43, -6.5) {};
		\node [style=none] (138) at (-42.5, -7) {};
		\node [style=none] (139) at (-43, -4) {};
		\node [style=none] (140) at (-44, -5.75) {$F11$};
		\node [style=none] (141) at (-43, 0.25) {};
		\node [style=none] (142) at (-42.5, 0.75) {};
		\node [style=none] (143) at (-44, -1.75) {$1FF$};
		\node [style=none] (144) at (-42.5, -4.5) {};
		\node [style=none] (146) at (6.5, 0.25) {};
		\node [style=none] (147) at (7, 0.75) {};
		\node [style=none] (148) at (7, 4) {};
		\node [style=none] (149) at (6.5, 4.5) {};
		\node [style=none] (150) at (7.5, 2.75) {$F$};
		\node [style=none] (151) at (-32.5, 1) {\tiny $\eta1$};
		\node [style=none] (152) at (-26.75, -4.25) {\tiny $\eta1$};
		\node [style=none] (153) at (-11, -2) {\tiny $\eta$};
		\node [style=none] (154) at (-3.75, -3.25) {\tiny $\eta$};
		\node [style=none] (155) at (-12.25, -0.25) {\tiny $\eta$};
		
		\draw [in=90, out=-180, looseness=1.25] (26.center) to (25.center);
		\draw (25.center) to (24.center);
		\draw [in=180, out=-90, looseness=1.25] (24.center) to (23.center);
		\draw [in=90, out=-180] (30.center) to (29.center);
		\draw (29.center) to (28.center);
		\draw [in=180, out=-90] (28.center) to (27.center);
		\draw [in=-135, out=0, looseness=0.75] (38.center) to (40);
		\draw [in=-180, out=0, looseness=0.50] (35.center) to (42.center);
		\draw [in=0, out=120, looseness=0.75] (40) to (34.center);
		\draw [in=0, out=-180, looseness=0.50] (45.center) to (37.center);
		\draw [in=180, out=0] (33.center) to (48.center);
		\draw [in=120, out=0, looseness=0.75] (48.center) to (41);
		\draw [in=180, out=0] (39.center) to (49.center);
		\draw [in=-105, out=0] (49.center) to (41);
		\draw [in=105, out=0] (50.center) to (51);
		\draw [in=0, out=-120] (51) to (45.center);
		\draw [in=-75, out=135, looseness=0.75] (55.center) to (51);
		\draw [in=180, out=75] (51) to (54.center);
		\draw (42.center) to (57.center);
		\draw [in=-180, out=0, looseness=0.75] (57.center) to (56.center);
		\draw (67.center) to (20.center);
		\draw (19.center) to (66.center);
		\draw [in=-180, out=0] (66.center) to (69.center);
		\draw [in=-180, out=0] (67.center) to (70.center);
		\draw [in=-45, out=-180, looseness=0.75] (68.center) to (55.center);
		\draw [in=-135, out=0, looseness=0.75] (80.center) to (83);
		\draw [in=135, out=-15, looseness=0.75] (75.center) to (85.center);
		\draw [in=-15, out=120, looseness=0.75] (83) to (74.center);
		\draw [in=0, out=-180, looseness=0.50] (88.center) to (77.center);
		\draw [in=180, out=0] (73.center) to (91.center);
		\draw [in=120, out=0, looseness=0.75] (91.center) to (84);
		\draw [in=-180, out=0, looseness=0.75] (76.center) to (92.center);
		\draw [in=105, out=0] (94.center) to (95);
		\draw [in=0, out=-120] (95) to (88.center);
		\draw [in=180, out=75] (95) to (98.center);
		\draw [in=-180, out=-45, looseness=0.50] (85.center) to (100.center);
		\draw [in=-180, out=0, looseness=0.75] (100.center) to (99.center);
		\draw [in=180, out=0, looseness=0.75] (56.center) to (81.center);
		\draw [in=0, out=165, looseness=0.75] (75.center) to (7.center);
		\draw [in=165, out=0] (6.center) to (74.center);
		\draw [in=300, out=120] (110) to (95);
		\draw [in=0, out=-120, looseness=0.75] (110) to (111.center);
		\draw (68.center) to (111.center);
		\draw [in=60, out=180] (112.center) to (110);
		\draw [in=180, out=-60] (110) to (113.center);
		\draw [in=0, out=-180, looseness=0.50] (20.center) to (118.center);
		\draw [in=0, out=-180, looseness=0.75] (39.center) to (125.center);
		\draw [in=180, out=0, looseness=0.75] (124.center) to (36.center);
		\draw [in=180, out=0] (119.center) to (38.center);
		\draw [in=0, out=180, looseness=0.75] (19.center) to (117.center);
		\draw [in=360, out=-180] (37.center) to (116.center);
		\draw (133.center) to (73.center);
		\draw (6.center) to (132.center);
		\draw (131.center) to (7.center);
		\draw (33.center) to (130.center);
		\draw (129.center) to (34.center);
		\draw (35.center) to (128.center);
		\draw [in=270, out=180] (138.center) to (137.center);
		\draw (137.center) to (135.center);
		\draw [in=180, out=90] (135.center) to (136.center);
		\draw [in=90, out=-180] (142.center) to (141.center);
		\draw (141.center) to (139.center);
		\draw [in=180, out=-90, looseness=1.25] (139.center) to (144.center);
		\draw [in=90, out=0, looseness=1.25] (149.center) to (148.center);
		\draw (148.center) to (147.center);
		\draw [in=0, out=-90, looseness=1.25] (147.center) to (146.center);
		\draw [in=-105, out=0] (81.center) to (84);
\end{tikzpicture}
\end{center}
Let us now focus eventually on the three instances of $\alpha$ in the diagram above. We can slide all of these instances on the top-right part of the diagram, and the same can be done with the instances of $\alpha$ in the right-hand side. If one writes down the results of these shifts, as in \eqref{naturality eta} and \eqref{naturality counit}, it is immediate to observe that the equality of the two sides can be proved by axiom \eqref{AC Vbicat}, in its form
\begin{center}
\begin{tikzpicture}[scale=.5]
		\node [style=none] (0) at (-2, 0) {};
		\node [style=none] (1) at (-2, 2) {};
		\node [style=none] (2) at (-2, -2) {};
		\node [style=fixed size node] (3) at (-3.5, -1) {\adjustbox{valign=m, max width=.6cm, max height=.6cm}{$\alpha 1$}};
		\node [style=fixed size node] (4) at (-5, 1) {\adjustbox{valign=m, max width=.6cm, max height=.6cm}{$\alpha^{-1}$}};
		\node [style=fixed size node] (5) at (-7, 1) {\adjustbox{valign=m, max width=.6cm, max height=.6cm}{$\alpha^{-1}$}};
		\node [style=none] (6) at (-8.5, 0) {};
		\node [style=none] (7) at (-8.5, 2) {};
		\node [style=none] (8) at (-8.5, -2) {};
		\node [style=none] (9) at (2, 2) {};
		\node [style=none] (10) at (2, 0) {};
		\node [style=none] (11) at (2, -2) {};
		\node [style=none] (12) at (8.5, 2) {};
		\node [style=none] (13) at (8.5, 0) {};
		\node [style=none] (14) at (8.5, -2) {};
		\node [style=fixed size node] (15) at (4, -1) {\adjustbox{valign=m, max width=.6cm, max height=.6cm}{$1\alpha^{-1}$}};
		\node [style=fixed size node] (16) at (6.5, 1) {\adjustbox{valign=m, max width=.6cm, max height=.6cm}{$\alpha^{-1}$}};
		\node [style=none] (17) at (-1.5, 2) {$m$};
		\node [style=none] (18) at (-1.5, 0) {$m1$};
		\node [style=none] (19) at (-1.25, -2) {$1m1$};
		\node [style=none] (20) at (-4.5, -2.25) {$m11$};
		\node [style=none] (21) at (-6.75, -0.25) {$m1$};
		\node [style=none] (22) at (-9, 0) {$1m$};
		\node [style=none] (23) at (-9.25, -2) {$11m$};
		\node [style=none] (24) at (-9, 2) {$m$};
		\node [style=none] (25) at (1.5, 0) {$1m$};
		\node [style=none] (26) at (1.25, -2) {$11m$};
		\node [style=none] (27) at (1.5, 2) {$m$};
		\node [style=none] (28) at (9, 2) {$m$};
		\node [style=none] (29) at (9, 0) {$m1$};
		\node [style=none] (30) at (9.25, -2) {$1m1$};
		\node [style=none] (31) at (0, 0) {$=$};
		\node [style=none] (32) at (-6, 2.5) {$m$};
				
		\draw [in=120, out=0] (7.center) to (5);
		\draw [in=0, out=-120] (5) to (6.center);
		\draw [in=105, out=75, looseness=1.25] (5) to (4);
		\draw [in=0, out=-120, looseness=0.75] (4) to (8.center);
		\draw [in=120, out=-60] (4) to (3);
		\draw [in=-180, out=60] (4) to (1.center);
		\draw [in=-120, out=-60, looseness=1.25] (5) to (3);
		\draw [in=180, out=-60] (3) to (2.center);
		\draw [in=180, out=60] (3) to (0.center);
		\draw [in=60, out=-120, looseness=0.75] (16) to (15);
		\draw [in=360, out=120] (15) to (10.center);
		\draw [in=0, out=-120] (15) to (11.center);
		\draw [in=-180, out=-45] (16) to (13.center);
		\draw [in=45, out=-180] (12.center) to (16);
		\draw [in=0, out=135, looseness=0.50] (16) to (9.center);
		\draw [in=180, out=-45, looseness=0.75] (15) to (14.center);
\end{tikzpicture}

\end{center}
(or, more specifically, in the form assumed after applying $[1,-]$ to it).

Let us now come to the unitality axioms for the pseudofunctor $F$. We are only going to prove the first one \eqref{psfun ax2}, as the second one can be shown with a very similar argument. The left-hand side of axiom \eqref{psfun ax2} for our $F$ become
\begin{center}
\begin{tikzpicture}[scale=.5]
		\node [style=none] (0) at (-7, 5) {};
		\node [style=none] (1) at (-1, 3.5) {};
		\node [style=none] (2) at (-7, 2.5) {};
		\node [style=none] (3) at (-7, -0.75) {};
		\node [style=none] (4) at (-7, -2.25) {};
		\node [style=none] (5) at (-7, -3.75) {};
		\node [style=fixed size node] (6) at (-5.25, 0.75) {\adjustbox{valign=m, max width=.6cm, max height=.6cm}{$[1,\lambda^{-1}]1$}};
		\node [style=fixed size node] (7) at (8, 2.5) {\adjustbox{valign=m, max width=.6cm, max height=.6cm}{$[1,\alpha^{-1}]$}};
		\node [style=fixed size node] (8) at (12, -2) {$\lambda$};
		\node [style=squared] (9) at (5.5, 1.75) {$[1,s^{-1}]$};
		\node [style=squared] (10) at (2.5, -0.5) {$[1,1s^{-1}]$};
		\node [style=none] (11) at (13.25, 4) {};
		\node [style=none] (12) at (13.25, 1.25) {};
		\node [style=none] (13) at (1.75, 3.25) {};
		\node [style=none] (14) at (1.25, -4.25) {};
		\node [style=none] (15) at (5.75, -0.25) {};
		\node [style=none] (16) at (2.25, 1.25) {};
		\node [style=none] (17) at (-4, -0.25) {};
		\node [style=none] (18) at (1, -3.25) {};
		\node [style=none] (19) at (-4, 2.5) {};
		\node [style=none] (20) at (3.5, -2.75) {};
		\node [style=none] (21) at (-6.75, 0.5) {};
		\node [style=none] (22) at (-5.25, -1) {};
		\node [style=none] (23) at (-2.75, 1.25) {};
		\node [style=none] (24) at (-5.25, 2) {};
		\node [style=none] (25) at (-2.75, 2.75) {};
		\node [style=none] (26) at (-0.75, 5) {};
		\node [style=none] (27) at (5.75, 5) {};
		\node [style=none] (28) at (11.5, 1.75) {};
		\node [style=none] (29) at (11.5, 0) {};
		\node [style=none] (30) at (7.5, -3) {};
		\node [style=none] (31) at (3.5, -3.5) {};
		\node [style=none] (32) at (0, -1.75) {};
		\node [style=none] (33) at (-7.75, 5) {$[1,\varepsilon]$};
		\node [style=none] (34) at (-7.75, 3.5) {$[1,1\varepsilon]$};
		\node [style=none] (35) at (-7.5, 2.5) {$\eta$};
		\node [style=none] (36) at (-7.5, -0.75) {$\eta 1$};
		\node [style=none] (37) at (-8, -2.25) {$1[1,m]$};
		\node [style=none] (38) at (-7.5, -3.75) {$1\eta$};
		\node [style=none] (39) at (-5.5, 0) {\tiny $[1,u1]1$};
		\node [style=none] (40) at (-3.5, 0) {\tiny $\eta 1$};
		\node [style=none] (41) at (-4.75, -1) {\tiny $u1$};
		\node [style=none] (42) at (-3.5, -2.25) {\tiny $u1$};
		\node [style=none] (43) at (1.5, -4.75) {\tiny $u1$};
		\node [style=none] (44) at (11.75, -0.75) {\tiny $m$};
		\node [style=none] (45) at (-5, 1.5) {\tiny $[1,m]1$};
		\node [style=none] (46) at (0.25, 1) {\rotatebox{25}{\tiny $[1,1[1,m]1]$}};
		\node [style=none] (47) at (-1.25, -0.5) {\rotatebox{30}{\tiny $1[1,m]$}};
		\node [style=none] (48) at (-2, -1.75) {\tiny $\eta 1$};
		\node [style=none] (49) at (1, -3.5) {\tiny $\eta 1$};
		\node [style=none] (50) at (4.25, -1.5) {\rotatebox{50}{\tiny $[1,\eta 11]$}};
		\node [style=none] (51) at (5.5, 0.75) {\tiny $[1,\eta1]$};
		\node [style=none] (52) at (5.25, 2.75) {\tiny $[1,\varepsilon]$};
		\node [style=none] (53) at (1, 3.5) {};
		\node [style=none] (54) at (-7, 3.5) {};
		\node [style=none] (55) at (-0.75, 2.25) {\rotatebox{15}{\tiny $[1,[1,m]11]$}};
		\node [style=none] (56) at (-2, 1) {\tiny $\eta$};
		\node [style=none] (57) at (0.5, -0.5) {\tiny $\eta$};
		\node [style=none] (58) at (1.75, -2.5) {\tiny $\eta$};
		\node [style=none] (59) at (13.75, 1.25) {$\eta$};
		\node [style=none] (60) at (13.75, 4) {$[1,m]$};
		\node [style=none] (61) at (5.5, 4.25) {\tiny $[1,m]$};
		\node [style=none] (62) at (6.25, -0.5) {\tiny $[1,1m]$};
		\node [style=none] (63) at (9.5, 1.5) {\tiny $[1,m1]$};
		\node [style=none] (64) at (2.5, 0.5) {\tiny $[1,1\varepsilon]$};
		\node [style=none] (65) at (1.5, 2.25) {\tiny $[1,1\varepsilon]$};
		\node [style=none] (66) at (1.75, 3.5) {\rotatebox{15}{\tiny $[1,[1,m]1]$}};
		\node [style=none] (68) at (3.5, 1.25) {\rotatebox{-20}{\tiny $[1,1m]$}};
		\node [style=none] (69) at (0, -2) {\tiny $1\eta$};
		\node [style=none] (70) at (2.25, -1.5) {\tiny $[1,1\eta 1]$};
		
		\draw [in=360, out=120, looseness=0.75] (9) to (0.center);
		\draw [in=120, out=0] (1.center) to (10);
		\draw [in=15, out=135] (7) to (13.center);
		\draw [in=45, out=-165, looseness=0.50] (13.center) to (6);
		\draw [in=-60, out=180, looseness=0.75] (14.center) to (6);
		\draw [in=0, out=-120, looseness=0.50] (10) to (5.center);
		\draw [in=-105, out=0] (15.center) to (7);
		\draw [in=0, out=180, looseness=0.50] (16.center) to (4.center);
		\draw [in=-180, out=0] (16.center) to (15.center);
		\draw [in=0, out=-120] (9) to (18.center);
		\draw [in=0, out=-180, looseness=0.75] (18.center) to (17.center);
		\draw [in=180, out=0] (3.center) to (17.center);
		\draw [in=105, out=-75] (7) to (8);
		\draw [in=-180, out=45, looseness=0.75] (7) to (11.center);
		\draw [in=360, out=180] (19.center) to (2.center);
		\draw [in=-120, out=0, looseness=0.75] (14.center) to (8);
		\draw [in=0, out=-180, looseness=0.75] (20.center) to (19.center);
		\draw [in=180, out=0, looseness=0.75] (20.center) to (12.center);
		\draw [style=dashed, in=180, out=-90, looseness=1.25] (21.center) to (22.center);
		\draw [style=dashed, in=-90, out=0, looseness=1.25] (22.center) to (23.center);
		\draw [style=dashed, in=360, out=90, looseness=1.25] (23.center) to (24.center);
		\draw [style=dashed, in=90, out=-180, looseness=1.25] (24.center) to (21.center);
		\draw [style=dashed, in=-90, out=90] (23.center) to (25.center);
		\draw [style=dashed, in=135, out=-90, looseness=0.75] (23.center) to (32.center);
		\draw [style=dashed, in=-180, out=-45, looseness=0.75] (32.center) to (31.center);
		\draw [style=dashed, in=-165, out=0] (31.center) to (30.center);
		\draw [style=dashed, in=-90, out=15, looseness=0.75] (30.center) to (29.center);
		\draw [style=dashed, in=270, out=90] (29.center) to (28.center);
		\draw [style=dashed, in=0, out=90, looseness=0.75] (28.center) to (27.center);
		\draw [style=dashed] (27.center) to (26.center);
		\draw [style=dashed, in=90, out=-180] (26.center) to (25.center);
		\draw [in=180, out=0] (54.center) to (1.center);
\end{tikzpicture}
\end{center}
It still doesn't match the right-hand side, which is just:

\begin{minipage}{.05\textwidth}
\begin{equation}\label{rhs hom psfun}
\phantom{c}
\end{equation}
\end{minipage}
\begin{minipage}{.95\textwidth}
\begin{center}
\begin{tikzpicture}[scale=.5,yscale=.8]
		\node [style=none] (0) at (-1.75, 1) {};
		\node [style=none] (1) at (-1.75, 3) {};
		\node [style=none] (2) at (-1.75, -1) {};
		\node [style=none] (3) at (-2.75, 1) {$ [1,1\varepsilon]$};
		\node [style=none] (4) at (-2.5, 3) {$[1,\varepsilon]$};
		\node [style=none] (5) at (-2.25, -1) {$\eta$};
		\node [style=none] (8) at (-1.75, -3) {};
		\node [style=none] (9) at (-2.5, -3) {$\eta 1$};
		\node [style=squared] (10) at (3.5, 1.75) {$[1,s^{-1}]$};
		\node [style=squared] (11) at (3.5, -1.5) {$t$};
		\node [style=none] (12) at (0.5, -0.5) {};
		\node [style=none] (13) at (1.5, -2.5) {};
		\node [style=none] (14) at (3.75, 0.5) {\tiny $[1,\eta 1]$};
		\node [style=none] (16) at (3.5, -0.5) {\tiny $[1,\varepsilon]$};
		\node [style=none] (17) at (1.5, -2.75) {\tiny $\eta$};
		\node [style=none] (18) at (0, -0.5) {\tiny $[1,\eta11]$};
		\node [style=none] (19) at (-1.75, -4.5) {};
		\node [style=none] (20) at (-1.75, -6) {};
		\node [style=none] (21) at (-2.75, -4.5) {$1[1,m]$};
		\node [style=none] (22) at (-2.5, -6) {$\eta 1$};
		\node [style=none] (23) at (5.5, -4.5) {};
		\node [style=none] (24) at (5.5, -6) {};
		\node [style=none] (25) at (6.5, -4.5) {$1[1,m]$};
		\node [style=none] (26) at (6.25, -6) {$\eta 1$};
		
		\draw [in=120, out=0, looseness=0.75] (1.center) to (10);
		\draw [in=-105, out=45] (12.center) to (10);
		\draw [in=0, out=-135] (12.center) to (8.center);
		\draw [in=120, out=0, looseness=0.75] (0.center) to (11);
		\draw [in=0, out=180] (13.center) to (2.center);
		\draw [in=-120, out=0] (13.center) to (11);
		\draw (19.center) to (23.center);
		\draw (20.center) to (24.center);
\end{tikzpicture}
\end{center}
\end{minipage}
However, with a usual combination of naturality of $\eta$ and modification property for the triangulators, we are able to bring it to the following form:
\begin{center}
\begin{tikzpicture}[scale=.5]
		\node [style=none] (0) at (-9.25, 4.75) {};
		\node [style=none] (1) at (-3.25, 3.25) {};
		\node [style=none] (2) at (-9.25, 2.25) {};
		\node [style=none] (3) at (-9.25, 0.25) {};
		\node [style=none] (4) at (-9.25, -2) {};
		\node [style=none] (5) at (-9.25, -4) {};
		\node [style=fixed size node] (6) at (6.25, 1.5) {\adjustbox{valign=m, max width=.6cm, max height=.6cm}{$[1,\lambda^{-1}]$}};
		\node [style=fixed size node] (7) at (12.25, 2.25) {\adjustbox{valign=m, max width=.6cm, max height=.6cm}{$[1,\alpha^{-1}]$}};
		\node [style=fixed size node] (8) at (14.5, -0.25) {\adjustbox{valign=m, max width=.6cm, max height=.6cm}{$[1,\lambda 1]$}};
		\node [style=squared] (9) at (3.25, 1.5) {$[1,s^{-1}]$};
		\node [style=squared] (10) at (0.25, -0.75) {$[1,1s^{-1}]$};
		\node [style=none] (11) at (16, 3.25) {};
		\node [style=none] (12) at (16, -3) {};
		\node [style=none] (14) at (11, -1.25) {};
		\node [style=none] (15) at (3.75, -0.75) {};
		\node [style=none] (16) at (-0.75, 2.25) {};
		\node [style=none] (18) at (-1.25, -3.75) {};
		\node [style=none] (19) at (-6.25, 2.25) {};
		\node [style=none] (20) at (1.25, -3) {};
		\node [style=none] (33) at (-10, 4.75) {$[1,\varepsilon]$};
		\node [style=none] (34) at (-10, 3.25) {$[1,1\varepsilon]$};
		\node [style=none] (35) at (-9.75, 2.25) {$\eta$};
		\node [style=none] (36) at (-9.75, 0.25) {$\eta 1$};
		\node [style=none] (37) at (-10.25, -2) {$1[1,m]$};
		\node [style=none] (38) at (-9.75, -4) {$1\eta$};
		\node [style=none] (39) at (7.75, 0.5) {\tiny $[1,u1]1$};
		\node [style=none] (47) at (-5, 0.5) {\rotatebox{30}{\tiny $1[1,m]$}};
		\node [style=none] (48) at (-4.25, -2) {\tiny $\eta 1$};
		\node [style=none] (49) at (-1.25, -4.25) {\tiny $\eta 1$};
		\node [style=none] (50) at (2, -1.75) {\rotatebox{50}{\tiny $[1,\eta 11]$}};
		\node [style=none] (51) at (3.25, 0.5) {\tiny $[1,\eta1]$};
		\node [style=none] (52) at (3, 2.5) {\tiny $[1,\varepsilon]$};
		\node [style=none] (53) at (-1.25, 3.25) {};
		\node [style=none] (54) at (-9.25, 3.25) {};
		\node [style=none] (57) at (-1.75, -0.75) {\tiny $\eta$};
		\node [style=none] (58) at (-0.5, -2.75) {\tiny $\eta$};
		\node [style=none] (59) at (16.5, -3) {$\eta$};
		\node [style=none] (60) at (16.75, 3.25) {$[1,m]$};
		\node [style=none] (61) at (9.75, 3.5) {\tiny $[1,m]$};
		\node [style=none] (62) at (10.5, 0.5) {\tiny $[1,1m]$};
		\node [style=none] (63) at (14.5, 1) {\tiny $[1,m1]$};
		\node [style=none] (64) at (0.25, 0.5) {\tiny $[1,1\varepsilon]$};
		\node [style=none] (68) at (-2.5, -2.25) {\tiny $1\eta$};
		\node [style=none] (69) at (0.25, -1.75) {\tiny $[1,1\eta 1]$};
		\node [style=none] (71) at (7.25, -1.25) {\tiny $[1,1m]$};
		\node [style=none] (72) at (5, -1.25) {\tiny $[1,m]$};
		\node [style=none] (74) at (-2.75, 2) {\rotatebox{20}{\tiny $[1,1[1,m]1]$}};
		\node [style=none] (75) at (0.25, 1) {};
		\node [style=none] (76) at (-1.75, 1) {};
		\node [style=none] (77) at (-3, -0.25) {};
		\node [style=none] (78) at (-3, -1.75) {};
		\node [style=none] (79) at (-2, -3) {};
		\node [style=none] (80) at (-0.5, -3) {};
		\node [style=none] (81) at (1.25, -1.5) {};
		\node [style=none] (82) at (1.25, -0.25) {};
		\node [style=none] (83) at (11.75, -1.5) {\tiny $[1,u11]$};
		
		\draw [in=360, out=120, looseness=0.75] (9) to (0.center);
		\draw [in=120, out=0] (1.center) to (10);
		\draw [in=-60, out=180, looseness=0.75] (14.center) to (6);
		\draw [in=0, out=-120, looseness=0.50] (10) to (5.center);
		\draw [in=-105, out=0] (15.center) to (7);
		\draw [in=0, out=180, looseness=0.50] (16.center) to (4.center);
		\draw [in=-180, out=0] (16.center) to (15.center);
		\draw [in=0, out=-120] (9) to (18.center);
		\draw [in=105, out=-75] (7) to (8);
		\draw [in=-180, out=45, looseness=0.75] (7) to (11.center);
		\draw [in=360, out=180] (19.center) to (2.center);
		\draw [in=-120, out=0, looseness=0.75] (14.center) to (8);
		\draw [in=0, out=-180, looseness=0.75] (20.center) to (19.center);
		\draw [in=180, out=0, looseness=0.75] (20.center) to (12.center);
		\draw [in=180, out=0] (54.center) to (1.center);
		\draw [in=135, out=60, looseness=0.75] (6) to (7);
		\draw [in=0, out=-180, looseness=0.75] (18.center) to (3.center);
		\draw [style=dashed, in=0, out=-90] (81.center) to (80.center);
		\draw [style=dashed] (80.center) to (79.center);
		\draw [style=dashed, in=270, out=180, looseness=1.25] (79.center) to (78.center);
		\draw [style=dashed] (78.center) to (77.center);
		\draw [style=dashed, in=180, out=90, looseness=1.25] (77.center) to (76.center);
		\draw [style=dashed] (76.center) to (75.center);
		\draw [style=dashed, in=90, out=0] (75.center) to (82.center);
		\draw [style=dashed] (82.center) to (81.center);
\end{tikzpicture}
\end{center}
If one writes down the objects in each region of this 2-cell, one can easily observe that in the 2-cell $[1,1s^{-1}]$, the second instance of $1$ refers to a monoidal unit. Therefore, one can see this 2-cell as simply $[1,s^{-1}]$. This allows us to recognize the possibility of applying the swallowtail equation \eqref{swallowtail1} in the highlighted area. Also, we invoke Mac Lane coherence theorem (since the coherence axioms clearly hold for the $\cat V$-bicategory $\cat C$) to conclude that the composition of the instances of $\lambda$ and $\alpha$ are the identity, and we find thus
\begin{center}
\begin{tikzpicture}[scale=.4]
		\node [style=none] (0) at (-11.75, 4.25) {};
		\node [style=none] (1) at (-5.75, 2.75) {};
		\node [style=none] (3) at (-11.75, -0.25) {};
		\node [style=none] (4) at (-11.75, -2.5) {};
		\node [style=none] (5) at (-11.75, -4.5) {};
		\node [style=squared] (9) at (0.75, 1) {$[1,s^{-1}]$};
		\node [style=squared] (10) at (-2.25, -1.25) {$t$};
		\node [style=none] (11) at (5.25, -0.5) {};
		\node [style=none] (12) at (5.25, -3.5) {};
		\node [style=none] (14) at (1.25, -1.25) {};
		\node [style=none] (15) at (-3.25, 1.75) {};
		\node [style=none] (16) at (-3.75, -4.25) {};
		\node [style=none] (17) at (-9, 1.75) {};
		\node [style=none] (18) at (-1.25, -3.5) {};
		\node [style=none] (19) at (-12.5, 4.25) {$[1,\varepsilon]$};
		\node [style=none] (20) at (-12.5, 2.75) {$[1,1\varepsilon]$};
		\node [style=none] (21) at (-12.25, 1.75) {$\eta$};
		\node [style=none] (22) at (-12.25, -0.25) {$\eta 1$};
		\node [style=none] (23) at (-12.75, -2.5) {$1[1,m]$};
		\node [style=none] (26) at (-8, -0.25) {\rotatebox{30}{\tiny $1[1,m]$}};
		\node [style=none] (27) at (-6.75, -2.5) {\tiny $\eta 1$};
		\node [style=none] (28) at (-3.75, -4.75) {\tiny $\eta 1$};
		\node [style=none] (29) at (-0.5, -2.25) {\rotatebox{50}{\tiny $[1,\eta 11]$}};
		\node [style=none] (30) at (0.75, 0) {\tiny $[1,\eta1]$};
		\node [style=none] (31) at (0.5, 2) {\tiny $[1,\varepsilon]$};
		\node [style=none] (32) at (-3.75, 2.75) {};
		\node [style=none] (33) at (-11.75, 2.75) {};
		\node [style=none] (34) at (-2.5, -2.5) {\tiny $\eta$};
		\node [style=none] (36) at (5.75, -3.5) {$\eta$};
		\node [style=none] (37) at (6, -0.5) {$[1,m]$};
		\node [style=none] (41) at (-2.25, 0) {\tiny $[1,\varepsilon]$};
		\node [style=none] (42) at (-4, -3) {\tiny $1\eta = \eta$};
		\node [style=none] (45) at (2.5, -1.75) {\tiny $[1,m]$};
		\node [style=none] (46) at (-5.25, 1.5) {\rotatebox{20}{\tiny $[1,1[1,m]1]$}};
		\node [style=none] (48) at (-12.5, -4.5) {$1\eta$};
		\node [style=none] (49) at (-4, -3.25) {};
		\node [style=none] (50) at (-4, -2.25) {};
		\node [style=none] (51) at (-11.75, 1.75) {};

		\draw [in=360, out=120, looseness=0.75] (9) to (0.center);
		\draw [in=120, out=0] (1.center) to (10);
		\draw [in=0, out=180, looseness=0.50] (15.center) to (4.center);
		\draw [in=-180, out=0] (15.center) to (14.center);
		\draw [in=0, out=-120] (9) to (16.center);
		\draw [in=180, out=0, looseness=0.75] (18.center) to (12.center);
		\draw [in=180, out=0] (33.center) to (1.center);
		\draw [in=0, out=-180, looseness=0.75] (16.center) to (3.center);
		\draw [in=-180, out=0] (14.center) to (11.center);
		\draw [in=0, out=-180] (18.center) to (49.center);
		\draw [in=0, out=-180, looseness=0.75] (49.center) to (5.center);
		\draw [in=0, out=180] (50.center) to (17.center);
		\draw [in=255, out=0] (50.center) to (10);
		\draw [in=180, out=0] (51.center) to (17.center);
\end{tikzpicture}
\end{center}
which is eventually the desired right-hand side \eqref{rhs hom psfun} simply by pulling down the multiplication 1-cell by the modification property of $t$. This concludes the proof.
\end{proof}

\section{Extra-pseudonautral transformations}\label{section extrapsnat}

In this section we introduce the enriched version of the notion of extra-pseudonatural transformation between $\cat V$-pseudofunctors $P,Q$ of sort
\begin{align*}
P&\colon\cat E\otimes\cat B\op\otimes\cat B\longrightarrow\cat D\\
Q&\colon\cat E\otimes\cat C\op\otimes\cat C\longrightarrow\cat D.
\end{align*}

The non-enriched case was introduced in \cite{Corner}. The axioms we are going to deal with are a lot, but still they are reduced in size by passing to the enriched context, just as it happens for usual pseudonatural transformations. Moreover, by keeping in mind that for every axiom (concerning $\cat B$) we have a symmetric one (concerning $\cat C$), everything reduces to keeping track of \emph{unitality}, \emph{functoriality} (as for pseudonatural transformations) and \emph{compatibility} with parameters (axioms \eqref{EU},\eqref{EF},\eqref{EC} below). What we get for free in the enriched context is \emph{naturality}. We specify at the outset that because of the considerable space that writing the axioms for such structures requires, we will often use intuitive (and frequent in literature) abbreviations, such as $P_{eab}$ to mean $P(e,a,b)$ (and the same for $Q$), or omitting to explicitly write the parameter $e$ if it happens to not being crucially involved in a specific diagram.

\begin{rmk}
It is easy to check that when a $\cat V$-pseudofunctor $P\colon \cat A\otimes\cat B\to\cat D$ is defined over a tensor product of $\cat V$-bicategories, we can define for every object $a$ in $\cat A$ and $b\in\cat B$, two $\cat V$-pseudofunctors $P_{a-}\colon\cat B\to\cat D$ and $P_{-b}\colon\cat B\to\cat D$. The definition of, for example, $P_{a-}$, is clearly given by $P_{ab}$ on each object $b$, and on each hom-object is the composition
\[P_{a-}\colon \cat B(b,b')\overset{u1}{\longrightarrow}\cat A(a,a)\cat B(b,b')\overset{P}{\longrightarrow}\cat D(P_{ab},P_{ab'}).\]
Similarly, $P_{-b}\coloneqq P\circ 1u$. At the level of 2-cells, we have
\begin{center}
\begin{tikzpicture}[scale=.5]
		\node [style=none] (0) at (-5, 2) {};
		\node [style=none] (1) at (-5, 0) {};
		\node [style=none] (2) at (-5, -2.5) {};
		\node [style=none] (3) at (-5.75, 2) {$m$};
		\node [style=none] (4) at (-5.75, 0) {$PP$};
		\node [style=none] (5) at (-5.75, -2.5) {$u1u1$};
		\node [style=none] (6) at (-5.75, 0.75) {};
		\node [style=none] (7) at (-6.5, 0) {};
		\node [style=none] (8) at (-6.5, -2.5) {};
		\node [style=none] (9) at (-5.75, -3.25) {};
		\node [style=none] (10) at (-7.75, -1.25) {$P_{a-}P_{a-}$};
		\node [style=none] (11) at (-1.75, 2) {};
		\node [style=none] (12) at (-2.25, -0.5) {};
		\node [style=none] (13) at (5, 2) {};
		\node [style=none] (14) at (0, 0.25) {};
		\node [style=fixed size node] (15) at (0.25, -3.25) {};
		\node [style=fixed size node] (16) at (3.75, 0.75) {$\lambda 1$};
		\node [style=none] (17) at (2.75, -3.25) {};
		\node [style=none] (18) at (2.5, -0.75) {};
		\node [style=none] (19) at (5, -1.5) {};
		\node [style=none] (20) at (5, -3.25) {};
		\node [style=none] (21) at (-0.25, -1.75) {$uu11$};
		\node [style=none] (22) at (-2.5, -1.25) {$1\beta1$};
		\node [style=none] (23) at (5.75, 2) {$P$};
		\node [style=none] (24) at (-3, 0) {$m^\otimes$};
		\node [style=none] (25) at (-1, 0.5) {$m_\mathcal A m_\mathcal B$};
		\node [style=none] (26) at (0.75, 1.25) {$m_\mathcal A 1$};
		\node [style=none] (27) at (2.5, 0) {$u11$};
		\node [style=none] (28) at (5.75, -1.5) {$u1$};
		\node [style=none] (29) at (2, -1.25) {$uu1$};
		\node [style=none] (30) at (-0.25, -0.5) {$1m_\mathcal B$};
		\node [style=none] (31) at (2.25, -3.5) {$1m_\mathcal B$};
		\node [style=none] (32) at (5.75, -3.25) {$m_\mathcal B$};
		\node [style=none] (33) at (-1.25, -3.25) {$1\beta 1$};
		\node [style=none] (34) at (5.75, 2.75) {};
		\node [style=none] (35) at (6.5, 2) {};
		\node [style=none] (36) at (6.5, -1.5) {};
		\node [style=none] (37) at (5.75, -2.25) {};
		\node [style=none] (38) at (7.5, 0.25) {$P_{a-}$};

		\draw [in=270, out=180] (9.center) to (8.center);
		\draw [in=90, out=-90] (7.center) to (8.center);
		\draw [in=180, out=90] (7.center) to (6.center);
		\draw [in=180, out=0] (1.center) to (11.center);
		\draw [in=0, out=180, looseness=0.75] (12.center) to (0.center);
		\draw [in=360, out=180] (13.center) to (11.center);
		\draw [in=-180, out=60] (12.center) to (14.center);
		\draw [in=180, out=-60, looseness=0.75] (12.center) to (15);
		\draw [in=-60, out=180, looseness=0.75] (17.center) to (14.center);
		\draw [in=135, out=45] (14.center) to (16);
		\draw [in=-180, out=0, looseness=0.75] (2.center) to (18.center);
		\draw [in=-120, out=75] (18.center) to (16);
		\draw [in=360, out=180] (20.center) to (17.center);
		\draw [in=180, out=-60] (18.center) to (19.center);
		\draw [in=270, out=0] (37.center) to (36.center);
		\draw (36.center) to (35.center);
		\draw [in=360, out=90] (35.center) to (34.center);
\end{tikzpicture}
\hspace{2em}\raisebox{3em}{and}\hspace{2em}
\begin{tikzpicture}[scale=.5]
		\node [style=none] (0) at (-2, 0) {};
		\node [style=none] (1) at (-2.75, 0) {$u_\mathcal D$};
		\node [style=none] (2) at (0, 0) {};
		\node [style=none] (3) at (2.5, 1.5) {};
		\node [style=none] (4) at (1.25, -1.25) {};
		\node [style=none] (5) at (2.5, 0) {};
		\node [style=none] (6) at (2.5, -2.5) {};
		\node [style=none] (7) at (3.25, 1.5) {$P$};
		\node [style=none] (8) at (3.25, 0) {$u_\mathcal A1$};
		\node [style=none] (9) at (3.25, -2.5) {$u_\mathcal B$};
		\node [style=none] (10) at (1.25, -2.25) {$1u_\mathcal B$};
		\node [style=none] (11) at (0, -1) {$u^\otimes$};
		\node [style=none] (12) at (3.25, 2.25) {};
		\node [style=none] (13) at (3.25, -0.75) {};
		\node [style=none] (14) at (4, 0) {};
		\node [style=none] (15) at (4, 1.5) {};
		\node [style=none] (16) at (5, 0.75) {$P_{a-}$};

		\draw [in=-75, out=-180] (6.center) to (4.center);
		\draw [in=180, out=75] (4.center) to (5.center);
		\draw [in=285, out=180] (4.center) to (2.center);
		\draw [in=-180, out=75] (2.center) to (3.center);
		\draw (0.center) to (2.center);
		\draw [in=90, out=0] (12.center) to (15.center);
		\draw (15.center) to (14.center);
		\draw [in=0, out=-90] (14.center) to (13.center);
\end{tikzpicture}
\end{center}
where the symbols $m^\otimes$ and $u^\otimes$ denote the multiplication and the unit for the $\cat V$-bicategory $\cat A\otimes\cat B$. The proof of the axioms are an easy consequence of the corresponding axioms for $P$.
\end{rmk}

\begin{notat}
Let us provide a name for some useful 2-cells which will be subsequently used in the definition of enriched extra-pseudonatural transformation. We can observe that for every pair of objects $a,a'$ in $\cat A$ and $b,b'$ in $\cat B$, we have isomorphisms
\begin{center}
\begin{tikzpicture}[scale=.4]
		\node [style=none] (0) at (-4, 0) {};
		\node [style=squared] (1) at (0, 0) {$\nu$};
		\node [style=none] (2) at (5, 3) {};
		\node [style=none] (3) at (5, -3) {};
		\node [style=none] (4) at (5.5, 3) {$m$};
		\node [style=none] (5) at (6.25, -3) {$P_{-b'}P_{a-}$};
		\node [style=none] (6) at (-4.5, 0) {$P$};
		\node [style=none] (7) at (-2.25, -2) {$\mathcal A(a,a')\mathcal B(b,b')$};
		\node [style=none] (8) at (5.5, 0) {$\mathcal D(P_{ab'},P_{a'b'})\mathcal D(P_{ab},P_{ab'})$};
		\node [style=none] (9) at (-2, 2) {$\mathcal D(P_{ab},P_{a'b'})$};

		\draw (0.center) to (1);
		\draw [in=180, out=-75] (1) to (3.center);
		\draw [in=75, out=180] (2.center) to (1);
\end{tikzpicture}
\hspace{2em}\raisebox{4em}{$\coloneqq$}\hspace{2em}
\begin{tikzpicture}[scale=.5]
		\node [style=none] (0) at (-5.5, 0.75) {$P$};
		\node [style=none] (1) at (-5, 0.75) {};
		\node [style=squared] (2) at (-2.75, -1.5) {$\rho^{-1}\lambda^{-1}$};
		\node [style=none] (3) at (-1, -0.5) {};
		\node [style=none] (4) at (-0.25, -3.25) {};
		\node [style=none] (5) at (-2.25, -0.5) {$mm$};
		\node [style=none] (6) at (0.5, -3.5) {$1uu1$};
		\node [style=none] (7) at (0.25, -1.25) {};
		\node [style=fixed size node] (8) at (-4.25, -3.75) {};
		\node [style=none] (9) at (5, 0.75) {};
		\node [style=none] (10) at (3.5, -1.5) {};
		\node [style=none] (11) at (3.25, -0.75) {$PP$};
		\node [style=none] (12) at (5.5, 0.75) {$m$};
		\node [style=none] (13) at (-3, -4) {$1\beta 1$};
		\node [style=none] (14) at (1.25, -1.5) {$m^\otimes$};
		\node [style=none] (15) at (5, -1.5) {};
		\node [style=none] (16) at (6.25, -1.5) {$P_{-b'}P_{a-}$};
		\node [style=none] (17) at (-2.75, -2.5) {$1uu1$};

		\draw [in=60, out=-180] (3.center) to (2);
		\draw [in=-180, out=-60] (2) to (4.center);
		\draw [in=0, out=120] (7.center) to (3.center);
		\draw [in=-120, out=0, looseness=0.75] (8) to (7.center);
		\draw [in=120, out=0, looseness=0.50] (1.center) to (10.center);
		\draw [in=-180, out=0, looseness=0.75] (7.center) to (9.center);
		\draw [in=-105, out=0] (4.center) to (10.center);
		\draw (10.center) to (15.center);
\end{tikzpicture}
\end{center}
a similar use of $\lambda$ and $\rho$ also leads to build the following two 2-isomorphisms, which we avoid to write down since they will only be used to state an axiom (the axiom \eqref{EC} of compatibility with parameters) that won't need to be checked for the rest of the paper, since we won't explicitly deal with bicoends with parameters.
\begin{center}
\begin{tikzpicture}[scale=.4]
		\node [style=squared] (0) at (0, 0) {$\kappa$};
		\node [style=none] (1) at (4.5, 0) {$\beta$};
		\node [style=none] (2) at (4.5, 3) {$m$};
		\node [style=none] (3) at (5.25, -3) {$P_{-b'}P_{a'-}$};
		\node [style=none] (4) at (-3, 2) {};
		\node [style=none] (5) at (-3, -2) {};
		\node [style=none] (6) at (-3.5, 2) {$m$};
		\node [style=none] (7) at (-4.25, -2) {$P_{-b}P_{a-}$};
		\node [style=none] (8) at (4, 0) {};
		\node [style=none] (9) at (4, 3) {};
		\node [style=none] (10) at (4, -3) {};
		\node [style=none] (11) at (-0.5, -3) {$ \mathcal A(a,a')\mathcal B(b',b)$};
		\node [style=none] (12) at (-4.75, 0) {$\mathcal D(P_{ab},P_{a'b})\mathcal D(P_{ab'},P_{ab})$};
		\node [style=none] (13) at (5.25, -1.5) {$\mathcal D(P_{ab'},P_{a'b'})\mathcal D(P_{a'b'},P_{a'b})$};
		\node [style=none] (14) at (5.25, 1.5) {$\mathcal D(P_{a'b'},P_{a'b})\mathcal D(P_{ab'},P_{a'b'})$};
		\node [style=none] (15) at (-0.5, 3) {$\mathcal D(P_{ab'},P_{a'b})$};
		
		\draw [in=75, out=-180] (9.center) to (0);
		\draw [in=180, out=0] (0) to (8.center);
		\draw [in=-75, out=180] (10.center) to (0);
		\draw [in=0, out=-120, looseness=0.75] (0) to (5.center);
		\draw [in=0, out=120, looseness=0.75] (0) to (4.center);
\end{tikzpicture}
\hspace{1em}\raisebox{3em}{,}\hspace{2em}
\begin{tikzpicture}[scale=.4]
		\node [style=squared] (0) at (0, 0) {$\upsilon$};
		\node [style=none] (1) at (-3, 0) {};
		\node [style=none] (2) at (4, 3) {};
		\node [style=none] (3) at (4, -3) {};
		\node [style=none] (4) at (4, 0) {};
		\node [style=none] (5) at (-3.75, 0) {$P$};
		\node [style=none] (6) at (4.75, 0) {$\beta$};
		\node [style=none] (7) at (4.75, 3) {$m$};
		\node [style=none] (8) at (5.5, -3) {$P_{-b'}P_{a'-}$};
		\node [style=none] (9) at (-1.5, -2.5) {$\mathcal A(a,a')\mathcal B(b',b)$};
		\node [style=none] (10) at (-1.5, 2.5) {$\mathcal D(P_{ab'},P_{a'b})$};
		\node [style=none] (11) at (5.25, -1.5) {$\mathcal D(P_{ab'}P_{a'b'})\mathcal D(P_{a'b'},P_{a'b})$};
		\node [style=none] (12) at (5.25, 1.5) {$\mathcal D(P_{a'b'},P_{a'b})\mathcal D(P_{ab'}P_{a'b'})$};

		\draw [in=180, out=0] (1.center) to (0);
		\draw [in=180, out=75, looseness=1.25] (0) to (2.center);
		\draw [in=360, out=180] (4.center) to (0);
		\draw [in=180, out=-75, looseness=1.25] (0) to (3.center);
\end{tikzpicture}

\end{center}
\end{notat}

\begin{defn}
Let $P\colon \cat E\otimes\cat{B}\op\otimes\cat{B}\to\cat{D}$ and $Q\colon\cat E\otimes\cat{C}\op\otimes\cat{C}\to \cat{D}$ be $\cat V$-pseudofunctors, for $\cat V$ a braided monoidal bicategory. An (enriched) \emph{extra-pseudonatural transformation} from $P$ to $Q$, denoted $i\colon P\epn Q$, consists of, for every pair of objects $a,b$ in $\cat{B}$, $x,y$ in $\cat{C}$, 
\begin{itemize}
    \item An enriched pseudonatural transformation
    $$i_{-,a,x}\colon P(-,a,a)\Rightarrow Q(-,x,x)$$
    \item Two 2-isomorphisms in $\cat V$
    \end{itemize}
    
\noindent
\begin{minipage}{.1\textwidth}
\begin{equation}\label{epnbb'}
\phantom{c}
\end{equation}
\end{minipage}
\begin{minipage}{.9\textwidth}
\begin{center}
\begin{tikzpicture}[scale=.5]
		\node [style=fixed size node] (0) at (0, 0) {$i_{ab}$};
		\node [style=none] (1) at (-4, 2) {};
		\node [style=none] (2) at (-4, -2) {};
		\node [style=none] (3) at (4, 2) {};
		\node [style=none] (4) at (4, -2) {};
		\node [style=none] (5) at (4.5, 2) {$i_\ast$};
		\node [style=none] (6) at (4.75, -2) {$P_{b-}$};
		\node [style=none] (7) at (-4.75, -2) {$P_{- a}$};
		\node [style=none] (8) at (-4.5, 2) {$i_\ast$};
		\node [style=none] (9) at (0, -2.75) {$\mathcal B(a,b)$};
		\node [style=none] (10) at (0, 2.75) {$\mathcal D(P_{ba},Q_{xx})$};
		\node [style=none] (11) at (-3.25, 0) {$\mathcal D(P_{ba},P_{aa})$};
		\node [style=none] (12) at (3.25, 0) {$\mathcal D(P_{ba},P_{bb})$};

		\draw [in=-60, out=-180] (4.center) to (0);
		\draw [in=180, out=60] (0) to (3.center);
		\draw [in=120, out=0] (1.center) to (0);
		\draw [in=0, out=-120] (0) to (2.center);
\end{tikzpicture}
\end{center}
\end{minipage}
which should more precisely be called $i_{e,ab,x}$, having one also for every $e$ in $\cat E$, but we allow ourselves some lightening in notations by in particular suppressing the $\cat E$ variable, and

\noindent
\begin{minipage}{.1\textwidth}
\begin{equation}\label{epncc'}
\phantom{c}
\end{equation}
\end{minipage}
\begin{minipage}{.9\textwidth}
\begin{center}
\begin{tikzpicture}[scale=.5]
		\node [style=fixed size node] (0) at (0, 0) {$i_{xy}$};
		\node [style=none] (1) at (-4, 2) {};
		\node [style=none] (2) at (-4, -2) {};
		\node [style=none] (3) at (4, 2) {};
		\node [style=none] (4) at (4, -2) {};
		\node [style=none] (5) at (4.5, 2) {$i^\ast$};
		\node [style=none] (6) at (4.75, -2) {$Q_{x-}$};
		\node [style=none] (7) at (-4.75, -2) {$Q_{-y}$};
		\node [style=none] (8) at (-4.5, 2) {$i^\ast$};
		\node [style=none] (9) at (0, -2.75) {$\mathcal C(x,y)$};
		\node [style=none] (10) at (0, 2.75) {$\mathcal D(P_{aa},Q_{xy})$};
		\node [style=none] (11) at (-3.25, 0) {$\mathcal D(Q_{yy},Q_{xy})$};
		\node [style=none] (12) at (3.25, 0) {$\mathcal D(Q_{xx},Q_{xy})$};

		\draw [in=-60, out=-180] (4.center) to (0);
		\draw [in=180, out=60] (0) to (3.center);
		\draw [in=120, out=0] (1.center) to (0);
		\draw [in=0, out=-120] (0) to (2.center);
\end{tikzpicture}
\end{center}
\end{minipage}
which again should be denoted $i_{e,a,xy}$. This structure is subject to the following three axioms.

\emph{Unitality}: for every object $b$ in $\cat B$

\vspace{1em}
\noindent
\begin{minipage}{.1\textwidth}
\begin{equation}\label{EU}
\phantom{c}
\tag{EU}
\end{equation}
\end{minipage}
\begin{minipage}{.9\textwidth}
\begin{center}
\begin{tikzpicture}[scale=.5, xscale=1.5]
		\node [style=none] (0) at (0, 0) {$=$};
		\node [style=none] (1) at (-1.5, 1.75) {$i_\ast$};
		\node [style=none] (2) at (-1.5, -1.75) {$u$};
		\node [style=none] (3) at (-7, 1.75) {};
		\node [style=none] (4) at (-7, -1.75) {};
		\node [style=none] (5) at (-2, 1.75) {};
		\node [style=none] (6) at (-2, -1.75) {};
		\node [style=none] (7) at (-7.5, 1.75) {$i_\ast$};
		\node [style=none] (8) at (-7.5, -1.75) {$u$};
		\node [style=none] (9) at (7.5, 1.75) {$i_\ast$};
		\node [style=none] (10) at (7.5, -1.75) {$u.$};
		\node [style=none] (11) at (2, 1.75) {};
		\node [style=none] (12) at (2, -1.75) {};
		\node [style=none] (13) at (7, 1.75) {};
		\node [style=none] (14) at (7, -1.75) {};
		\node [style=none] (15) at (1.5, 1.75) {$i_\ast$};
		\node [style=none] (16) at (1.5, -1.75) {$u$};
		\node [style=none] (17) at (-6, -1.75) {};
		\node [style=none] (18) at (-3, -1.75) {};
		\node [style=fixed size node] (19) at (-4.5, 0) {$i_{bb}$};
		\node [style=none] (20) at (-5.75, -0.75) {$P_{- b}$};
		\node [style=none] (21) at (-4.5, -3) {$u$};
		\node [style=none] (22) at (-3.25, -0.75) {$P_{b-}$};

		\draw (11.center) to (13.center);
		\draw (12.center) to (14.center);
		\draw [in=105, out=0] (3.center) to (19);
		\draw [in=180, out=75] (19) to (5.center);
		\draw [in=75, out=-120] (19) to (17.center);
		\draw [in=360, out=180] (17.center) to (4.center);
		\draw [in=255, out=-75] (17.center) to (18.center);
		\draw [in=180, out=0] (18.center) to (6.center);
		\draw [in=-60, out=105] (18.center) to (19);
\end{tikzpicture}
\end{center}
\end{minipage}
\vspace{1em}

\emph{Functoriality}: for every triple of objects $a,b,c$ in $\cat B$
\vspace{1em}

\noindent
\begin{minipage}{.01\textwidth}
\begin{equation}\label{EF}
\phantom{c}
\tag{EF}
\end{equation}
\end{minipage}
\begin{minipage}{.99\textwidth}
\begin{center}
\begin{tikzpicture}[scale=.5,xscale=.8]
		\node [style=none] (0) at (-10.25, 2.5) {};
		\node [style=none] (1) at (-10.25, 0) {};
		\node [style=none] (2) at (-10.25, -2) {};
		\node [style=fixed size node] (3) at (-7.25, 1.25) {$i_{ac}$};
		\node [style=none] (4) at (-6.25, -2) {};
		\node [style=none] (5) at (-3, 0.75) {};
		\node [style=none] (6) at (-3, -1) {};
		\node [style=none] (7) at (-3, -2.75) {};
		\node [style=none] (8) at (-3, 2.75) {};
		\node [style=none] (9) at (-10.75, 2.5) {$i_\ast$};
		\node [style=none] (10) at (-10.75, 0) {$m$};
		\node [style=none] (11) at (-11.25, -2) {$P_{-a}P_{-a}$};
		\node [style=none] (12) at (-2.5, 0.75) {$m$};
		\node [style=none] (13) at (-2, -1) {$P_{c-}P_{c-}$};
		\node [style=none] (14) at (-2.5, -2.75) {$\beta$};
		\node [style=none] (15) at (-2.5, 2.75) {$i_\ast$};
		\node [style=none] (16) at (0, 0) {$=$};
		\node [style=none] (17) at (3, 2.5) {};
		\node [style=none] (18) at (3, 0) {};
		\node [style=none] (19) at (3, -2) {};
		\node [style=none] (23) at (21.75, 0.75) {};
		\node [style=none] (24) at (21.75, -1) {};
		\node [style=none] (25) at (21.75, -2.75) {};
		\node [style=none] (26) at (21.75, 2.75) {};
		\node [style=none] (27) at (22.25, 0.75) {$m$};
		\node [style=none] (28) at (22.75, -1) {$P_{c-}P_{c-}$};
		\node [style=none] (29) at (22.25, -2.75) {$\beta$};
		\node [style=none] (30) at (22.25, 2.75) {$i_\ast$};
		\node [style=fixed size node] (31) at (5.5, -1) {$i_{ab}1$};
		\node [style=fixed size node] (32) at (18.5, 0) {$i_{bc}1$};
		\node [style=none] (33) at (5.5, 1.5) {};
		\node [style=none] (34) at (10.5, 2.5) {};
		\node [style=none] (35) at (10.75, -1) {};
		\node [style=none] (36) at (7.75, -2.25) {};
		\node [style=squared] (37) at (13.75, -1) {$\lambda\rho$};
		\node [style=squared] (38) at (14.25, 1) {$\nu$};
		\node [style=none] (39) at (18.75, 2.75) {};
		\node [style=none] (40) at (16.25, 1.75) {};
		\node [style=none] (41) at (-8.25, 0.25) {$P_{-a}$};
		\node [style=none] (42) at (-7.75, -2.25) {$m^\mathsf{op}$};
		\node [style=none] (43) at (-6, -1.25) {$m$};
		\node [style=none] (44) at (-5.25, 0.25) {$P_{c-}$};
		\node [style=none] (45) at (2.5, 2.5) {$i_\ast$};
		\node [style=none] (46) at (2.5, 0) {$m$};
		\node [style=none] (47) at (2, -2) {$P_{-a}P_{-a}$};
		\node [style=none] (48) at (5.5, 1.75) {$m$};
		\node [style=none] (49) at (4.25, -0.25) {$i_\ast1$};
		\node [style=none] (50) at (6.5, -0.25) {$i_\ast 1$};
		\node [style=none] (51) at (6, -2.8) {$P_{b-}P_{-a}$};
		\node [style=none] (52) at (7.75, -1) {$PP$};
		\node [style=none] (53) at (9.75, -3.5) {$u11u$};
		\node [style=none] (54) at (9.75, -1.25) {$m^\otimes$};
		\node [style=none] (55) at (8.25, 0.5) {$m$};
		\node [style=none] (56) at (12.5, 1.25) {$P$};
		\node [style=none] (57) at (14.25, 1.85) {$m$};
		\node [style=none] (58) at (17, -1.5) {$P_{-b}P_{c-}$};
		\node [style=none] (59) at (16.75, 1) {$i_\ast 1$};
		\node [style=none] (60) at (19.75, 1) {$i_\ast 1$};
		\node [style=none] (61) at (18.75, 3) {$m$};
		\node [style=none] (62) at (15.75, -3.25) {$1\beta 1$};
		\node [style=none] (63) at (11, -2) {$1\beta 1$};
		\node [style=none] (64) at (12.25, 0.25) {$mm$};
		\node [style=none] (65) at (12, 3) {$i_\ast$};

		\draw [in=120, out=0, looseness=0.75] (0.center) to (3);
		\draw [in=-180, out=60, looseness=0.75] (3) to (8.center);
		\draw [in=180, out=-60, looseness=0.50] (3) to (6.center);
		\draw [in=180, out=60, looseness=0.50] (4.center) to (5.center);
		\draw [in=180, out=-60, looseness=0.50] (4.center) to (7.center);
		\draw [in=-180, out=0] (1.center) to (4.center);
		\draw [in=-105, out=0, looseness=0.75] (2.center) to (3);
		\draw [in=120, out=0, looseness=0.75] (17.center) to (31);
		\draw [in=0, out=-120] (31) to (19.center);
		\draw [in=180, out=0] (18.center) to (33.center);
		\draw [in=180, out=60, looseness=0.75] (31) to (34.center);
		\draw [in=180, out=0, looseness=0.75] (33.center) to (35.center);
		\draw [in=180, out=-60] (31) to (36.center);
		\draw [in=105, out=75, looseness=0.75] (35.center) to (37);
		\draw [in=-180, out=-75, looseness=0.50] (35.center) to (25.center);
		\draw [in=180, out=75] (36.center) to (38);
		\draw [in=-120, out=-75, looseness=0.75] (36.center) to (37);
		\draw [in=-60, out=-180, looseness=0.75] (24.center) to (32);
		\draw [in=-180, out=60] (32) to (26.center);
		\draw [in=0, out=-180] (23.center) to (39.center);
		\draw [in=75, out=-180, looseness=0.75] (39.center) to (38);
		\draw [in=-45, out=120, looseness=0.75] (32) to (40.center);
		\draw [in=360, out=135, looseness=0.75] (40.center) to (34.center);
		\draw [in=-120, out=-75] (38) to (32);
\end{tikzpicture}
\end{center}
\end{minipage}

\vspace{1em}
\noindent
and similarly for triples of objects in $\cat C$, by replacing $P$ by $Q$ and $(-)_\ast$ by $(-)^\ast$.

\emph{Compatibility} with parameters, roughly consisting in an interchange law for the structure of pseudonatural transformation in the parametric variable and the isomorphisms \eqref{epnbb'} and \eqref{epncc'}: for every $x$ in $\cat C$, every pair of objects $e,f$ in $\cat E$ and $a,b$ in $\cat B$

\noindent
\begin{minipage}{.01\textwidth}
\begin{equation}\label{EC}
\phantom{c}
\tag{EC}
\end{equation}
\end{minipage}
\begin{minipage}{.99\textwidth}
\begin{center}
\begin{tikzpicture}[scale=.5,xscale=.8,yscale=1.2]
		\node [style=none] (0) at (-14.5, 0) {$1i_\ast$};
		\node [style=none] (1) at (-14.5, 2) {$m$};
		\node [style=none] (2) at (-15.75, -2) {$Q_{-xx}P_{e-a}$};
		\node [style=none] (3) at (-14, -2) {};
		\node [style=none] (4) at (-14, 0) {};
		\node [style=none] (5) at (-14, 2) {};
		\node [style=squared] (6) at (-11.5, -1) {$1i_{e,ab}$};
		\node [style=fixed size node] (7) at (-9.5, 1) {$m_i$};
		\node [style=squared] (8) at (-7.5, -1) {$i_{ef,b}1$};
		\node [style=none] (9) at (-11, 0.25) {$1i_\ast$};
		\node [style=none] (10) at (-9.5, -2) {};
		\node [style=none] (11) at (-9.5, -2.5) {$Q_{-xx}P_{eb-}$};
		\node [style=none] (12) at (-5.5, -2) {};
		\node [style=fixed size node] (13) at (-3.5, -1) {$\nu^{-1}$};
		\node [style=none] (14) at (-7.25, 2) {};
		\node [style=none] (15) at (-2, -1) {};
		\node [style=none] (16) at (-2, 1.5) {};
		\node [style=none] (17) at (-1.25, -1) {$P_{-b-}$};
		\node [style=none] (18) at (-1.5, 1.5) {$i_\ast$};
		\node [style=none] (19) at (-5.5, -2.5) {$P_{-bb}P_{eb-}$};
		\node [style=none] (20) at (-5.75, 2) {$m$};
		\node [style=none] (21) at (-4, 0.5) {$m$};
		\node [style=none] (22) at (-6.25, 0) {$i_\ast 1$};
		\node [style=none] (23) at (-8.25, 0.25) {$i^\ast 1$};
		\node [style=none] (24) at (0, 0) {$=$};
		\node [style=none] (25) at (2.25, 0) {$1i_\ast$};
		\node [style=none] (26) at (2.25, 2) {$m$};
		\node [style=none] (27) at (1, -2) {$Q_{-xx}P_{e-a}$};
		\node [style=none] (28) at (2.75, -2) {};
		\node [style=none] (29) at (2.75, 0) {};
		\node [style=none] (30) at (2.75, 2) {};
		\node [style=none] (31) at (17.75, -1) {};
		\node [style=none] (32) at (17.75, 1.5) {};
		\node [style=none] (33) at (18.75, -1) {$P_{-b-}$};
		\node [style=none] (34) at (18.25, 1.5) {$i_\ast$};
		\node [style=fixed size node] (35) at (16.5, -1) {$\upsilon^{-1}$};
		\node [style=fixed size node] (36) at (4, 1) {$m_i$};
		\node [style=squared] (37) at (5.5, -1.25) {$i_{ef,a}1$};
		\node [style=none] (38) at (7, -2) {};
		\node [style=fixed size node] (39) at (9, -1) {$\kappa$};
		\node [style=none] (40) at (11, -2) {};
		\node [style=squared] (41) at (13, -1.25) {$1i_{f,ab}$};
		\node [style=none] (42) at (12.75, 1.25) {};
		\node [style=none] (43) at (13, 2.5) {};
		\node [style=none] (44) at (13.25, 0) {$1i_\ast$};
		\node [style=none] (45) at (14.75, 1.25) {$i_\ast 1$};
		\node [style=none] (46) at (15.25, 0) {$\beta$};
		\node [style=none] (47) at (5.75, 2) {};
		\node [style=none] (48) at (9, 2) {};
		\node [style=none] (49) at (7, -2.5) {$P_{-aa}P_{e-a}$};
		\node [style=none] (50) at (11, -2.3) {$P_{-ba}P_{f-a}$};
		\node [style=none] (51) at (14.75, -2.75) {$P_{-ba}P_{fb-}$};
		\node [style=none] (52) at (11.75, -0.25) {$1i_\ast$};
		\node [style=none] (53) at (16.25, 0.5) {$m$};
		\node [style=none] (54) at (13, 2.75) {$m$};
		\node [style=none] (55) at (9, 2.25) {$i_\ast$};
		\node [style=none] (56) at (6.75, -0.25) {$i_\ast 1$};
		\node [style=none] (57) at (4.5, -0.25) {$i^\ast 1$};
		\node [style=none] (58) at (5.75, 2.25) {$m$};
		\node [style=none] (59) at (8.25, 0.5) {$m$};
		\node [style=none] (60) at (10.25, -1.25) {$\beta$};
		\node [style=none] (61) at (12.75, 1.5) {$\beta$};
		\node [style=none] (62) at (9.5, 0.75) {$m$};
		\node [style=none] (63) at (11.3, 1.2) {$i_\ast 1$};

		\draw [in=120, out=0] (4.center) to (6);
		\draw [in=0, out=-120] (6) to (3.center);
		\draw [in=-105, out=75] (6) to (7);
		\draw [in=360, out=120, looseness=0.75] (7) to (5.center);
		\draw [in=180, out=-60] (6) to (10.center);
		\draw [in=-120, out=0] (10.center) to (8);
		\draw [in=-60, out=120] (8) to (7);
		\draw [in=-180, out=-60] (8) to (12.center);
		\draw [in=0, out=-120] (13) to (12.center);
		\draw [in=-180, out=60] (7) to (14.center);
		\draw [in=120, out=0, looseness=0.75] (14.center) to (13);
		\draw (15.center) to (13);
		\draw [in=75, out=-180, looseness=0.75] (16.center) to (8);
		\draw [in=-120, out=0] (29.center) to (36);
		\draw [in=0, out=120] (36) to (30.center);
		\draw [in=0, out=-135] (37) to (28.center);
		\draw [in=105, out=-60] (36) to (37);
		\draw [in=-180, out=-60] (37) to (38.center);
		\draw [in=-120, out=0] (38.center) to (39);
		\draw [in=180, out=-60] (39) to (40.center);
		\draw [in=-120, out=0] (40.center) to (41);
		\draw [in=-105, out=-75] (41) to (35);
		\draw (31.center) to (35);
		\draw [in=0, out=-180] (35) to (42.center);
		\draw [in=0, out=105] (35) to (43.center);
		\draw [in=180, out=75, looseness=0.75] (41) to (32.center);
		\draw [in=60, out=180] (47.center) to (36);
		\draw [in=120, out=0, looseness=0.75] (47.center) to (39);
		\draw [in=60, out=180, looseness=0.75] (48.center) to (37);
		\draw [in=120, out=0, looseness=0.75] (48.center) to (41);
		\draw [in=-180, out=0, looseness=0.75] (39) to (42.center);
		\draw [in=75, out=180] (43.center) to (39);
\end{tikzpicture}

\end{center}
\end{minipage}
\vspace{1em}

and similarly for pairs of objects in $\cat C$ and a fixed one in $\cat B$. The notation $m_i$ is the naturality for the multiplication introduced at Section \ref{subsect vpsnat}.

\end{defn}

\begin{rmk}
If $\cat V=\mathsf{Cat}$, by whiskering the two structural 2-cells \eqref{epnbb'} and \eqref{epncc'}, with enriched morphisms $g\colon\mathbb 1\to\cat B(b,b')$ and $h\colon\mathbb{1}\to\cat C(c,c')$ respectively, we get the non-enriched version of the structure, that is, the 2-isomorphisms $i_{g,c}$ and $i_{b,h}$
    \begin{center}
\begin{tikzcd}[row sep=3em]
{P(e,b',b)} \arrow[r, "{P(e,g,b)}"] \arrow[d, "{P(e,g,b')}"'] \arrow[rd, phantom, "{i_{g,c}\Arrowdl}" description] & {P(e,b,b)} \arrow[r, "{i_{e,b,c'}}"] \arrow[d, "{i_{e,b,c}}" description] \arrow[rd, phantom, "{i_{b,h}\Arrowdl}" description] & {Q(e,c',c')} \arrow[d, "{Q(e,h,c')}"] \\
{P(e,b',b')} \arrow[r, "{i_{e,b',c}}"']                                                                   & {Q(e,c,c)} \arrow[r, "{Q(e,c,h)}"']                                                                                   & {Q(e,c,c')}                          
\end{tikzcd}
    \end{center}
and the corresponding axioms which can be checked to be precisely those given in \cite{Corner}.
\end{rmk}
There is a notion of morphism of extra-pseudonatural transformations:
\begin{defn}\label{def extrapsnat category}
Let $j,j'\colon P\epn Q$ be extra-pseudonatural transformation of $\cat V$-pseudofunctors $P\colon \cat B\op\otimes\cat B\to\cat D$ and $Q\colon\cat C\op\otimes\cat C\to\cat D$. A \emph{morphism} $\Gamma\colon j\to j'$ is the data of an indexed family of 2-morphisms in $\cat V$ (for $b$ in $\cat B$ and $c$ in $\cat C$)
$$\Gamma_{b,c}\colon j_{b,c}\Longrightarrow j'_{b,c}$$
such that the following equalities hold true for all pairs $(b,b')$ and $(c,c')$ of objects of $\cat B$ and $\cat C$ respectively:
\begin{equation}
\begin{tikzcd}[row sep=5em]
	{\mathcal B(b,b')} & {\mathcal D(Pb'b,Pbb)} \\
	{\mathcal D(Pb'b,Pb'b')} & {\mathcal D(Pb'b,Qcc)}
	\arrow["{P(-,b)}", from=1-1, to=1-2]
	\arrow["{P(b',-)}"', from=1-1, to=2-1]
	\arrow["{\Arrowdl j_{bb',c}}"{description}, draw=none, from=1-2, to=2-1]
	\arrow["{{(j_{b,c})}_*}", from=1-2, to=2-2]
	\arrow[""{name=0, anchor=center, inner sep=0}, "{{(j_{b',c})}_*}", from=2-1, to=2-2]
	\arrow[""{name=1, anchor=center, inner sep=0}, "{{(j'_{b',c})}_*}"', curve={height=30pt}, from=2-1, to=2-2]
	\arrow["{\Arrowd{(\Gamma_{b',c})}_*}"{description}, draw=none, from=0, to=1]
\end{tikzcd}
=
\begin{tikzcd}[row sep=5em]
	{\mathcal B(b,b')} & {\mathcal D(Pb'b,Pbb)} \\
	{\mathcal D(Pb'b,Pb'b')} & {\mathcal D(Pb'b,Qcc)}
	\arrow["{P(-,b)}", from=1-1, to=1-2]
	\arrow["{P(b',-)}"', from=1-1, to=2-1]
	\arrow["{\Arrowdl j'_{bb',c}}"{description}, draw=none, from=1-2, to=2-1]
	\arrow[""{name=0, anchor=center, inner sep=0}, "{{(j_{b,c})}_*}", curve={height=-36pt}, from=1-2, to=2-2]
	\arrow[""{name=1, anchor=center, inner sep=0}, "{{(j'_{b,c})}_*}"', from=1-2, to=2-2]
	\arrow["{j'_{b',c}}"', from=2-1, to=2-2]
	\arrow["{\overset{(\Gamma_{b,c})_*}{\Leftarrow}}"{description}, draw=none, from=0, to=1]
\end{tikzcd}
\end{equation}
\begin{equation}
\begin{tikzcd}[row sep=5em]
	{\mathcal C(c,c')} & {\mathcal D(Qc'c', Qcc')} \\
	{\mathcal D(Qcc,Qcc')} & {\mathcal D(Pbb,Qcc')}
	\arrow["{Q(-,c')}", from=1-1, to=1-2]
	\arrow["{Q(c,-)}"', from=1-1, to=2-1]
	\arrow["{\Arrowdl j_{b,cc'}}"{description}, draw=none, from=1-2, to=2-1]
	\arrow["{{(j_{b,c'})}^*}", from=1-2, to=2-2]
	\arrow[""{name=0, anchor=center, inner sep=0}, "{{(j_{b,c})}^*}", from=2-1, to=2-2]
	\arrow[""{name=1, anchor=center, inner sep=0}, "{{(j'_{b,c})}^*}"', curve={height=30pt}, from=2-1, to=2-2]
	\arrow["{\Arrowd{(\Gamma_{b,c})}^*}"{description}, draw=none, from=0, to=1]
\end{tikzcd}
=
\begin{tikzcd}[row sep=5em]
	{\mathcal C(c,c')} & {\mathcal D(Qc'c', Qcc')} \\
	{\mathcal D(Qcc,Qcc')} & {\mathcal D(Pbb,Qcc')}
	\arrow["{Q(-,c')}", from=1-1, to=1-2]
	\arrow["{Q(c,-)}"', from=1-1, to=2-1]
	\arrow["{\Arrowdl j'_{b,cc'}}"{description}, draw=none, from=1-2, to=2-1]
	\arrow[""{name=0, anchor=center, inner sep=0}, "{{(j_{b,c'})}^*}", curve={height=-40pt}, from=1-2, to=2-2]
	\arrow[""{name=1, anchor=center, inner sep=0}, "{{(j'_{b,c'})}^*}"', from=1-2, to=2-2]
	\arrow["{({j'_{b,c})}^*}"', from=2-1, to=2-2]
	\arrow["{\overset{(\Gamma_{b,c'})^*}{\Leftarrow}}"{description}, draw=none, from=0, to=1]
\end{tikzcd}
\end{equation}
This definition clearly gives rise to a category $\cat V$-$\psnat^\mathsf{e}(P,Q)$, where composition is defined componentwise and the identity of an extra-pseudonatural transformation $j$ is at each component the identical 2-cell of $j_{b,c}$ in $\cat V$.
\end{defn}

\subsection{Examples}\label{subsect ex}

In this section we provide one main example of extra-pseudonatural transformation, which in the enriched 1-dimensional setting is the motivating example for the very introduction of the notion. First, let us recall a fact about non-enriched extra-pseudonatural transformations.

\begin{rmk}\label{parametric extra}
It is a straightforward construction that a \emph{parametric} family of pseudoadjunctions of pseudofunctors (such as the tensor-hom, see Remark \ref{parametric family rmk}) gives rise to extra-pseudonatural transformations. Indeed, suppose in fact we have pseudofunctors $F\colon \cat E\times\cat D\to \cat C$ and $G\colon\cat E\op\times\cat C\to\cat D$ together with, for every $e$ in $\cat E$ a pseudoadjunction $F(e,-)\dashv G(e,-)$, and suppose the resulting equivalences of hom-categories are part of a pseudonatural transformation
$$\phi\colon \cat C(F(-,d),c)\Rightarrow \cat D(d,G(-,c))$$
for every $c$ in $\cat C$, $d$ in $\cat D$. Then, the units and counits define extra-pseudonatural transformations
\[\eta_d\colon d\epn G(-,F(-,d))\]
and \[\varepsilon_c\colon F(-,G(-,c))\epn c.\]
From the natural isomorphism
\begin{center}
\begin{tikzcd}
{\cat C(F(e,d),c)} \arrow[r, "\phi_e"] \arrow[d, "{\cat C(F(f,d),c)}"'] & {\cat D(c,G(e,c))} \arrow[d, "{\cat D(c,G(f,c))}"] \arrow[ld, phantom, "\Arrowdl \phi_f" description] \\
{\cat C(F(e',d),c)} \arrow[r, "\phi_{e'}"']                             & {\cat D(c,G(e',c))}                                                                         
\end{tikzcd}
\end{center}
given by the parametric family, and since $\phi\colon g\mapsto R(c,g)\circ\eta^c_d$, we get for every $g\colon F(e,d)\to c$ an isomorphism $$(\phi_f)_g\colon G(f,e)\circ G(e,g)\circ\eta^c_d\overset{\sim}{\Rightarrow} G(e',g\circ F(f,d))\circ\eta^{c'}_d.$$
Now if we take $c=F(e,d)$ and $g=\id$, we specialize the isomorphism above to a pseudonatural transformation
\begin{center}
\begin{tikzcd}
d \arrow[r, "\eta^c_d"] \arrow[d, "\eta^{c'}_d"'] & {G(c,F(c,d))} \arrow[d, "{G(f,F(c,d))}"] \arrow[ld, phantom, "\Arrowdl" description] \\
{G(c',F(c',d))} \arrow[r, "{G(c',F(f,d))}"']      & {G(c',F(c,d))}                                                                 
\end{tikzcd}
\end{center}
which is indeed the data of an extra-pseudonatural transformation $\eta_d\colon d\epn G(-,F(-,d))$. Then, unitality, functoriality and naturality axioms for this structure are deduced from the correspondent axioms on the pseudonatural transformation $\phi$. The argument for $\varepsilon$ follows an analogous pattern.
\end{rmk}

The construction given in Remark \ref{parametric extra} above will be needed in order to prove the following:

\begin{prop}\label{correspondence psnat epn}
Let $\cat V$ be a right closed braided monoidal bicategory. The data of a $\cat V$-pseudonatural transformation of $\cat V$-pseudofunctors $t\colon F\Rightarrow G$, for $F,G\colon\cat C\to\cat D$, is the data of an extra-pseudonatural transformation $i\colon \mathbb{1}\epn\mathcal D(F-,G-),$ where $\mathbb 1$ stands for the constant pseudofunctor over $\mathbb 1$ in $\cat V$
\end{prop}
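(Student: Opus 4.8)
The statement is an equivalence of data, so the proof is a dictionary: given a $\cat V$-pseudonatural transformation $t\colon F\Rightarrow G$ I will produce an extra-pseudonatural transformation $i\colon\mathbb 1\epn\cat D(F-,G-)$ and conversely, and check that the two translations are mutually inverse. The key observation making this work is that the hom-pseudofunctor $\cat D(-,-)\colon\cat D\op\otimes\cat D\to\cat V$ of the previous subsection, precomposed with $F\op\otimes G\colon \cat C\op\otimes\cat C\to\cat D\op\otimes\cat D$ (using Remark \ref{oponfunctors} for $F\op$), gives precisely the target $\cat V$-pseudofunctor $\cat D(F-,G-)$; and the source $\mathbb 1$ is the constant $\cat V$-pseudofunctor $\cat I\op\otimes\cat I\cong\cat I\to\cat V$ over the monoidal unit, which is legitimate by Remark \ref{Iop times I}. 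Since $\cat I$ has a single object and trivial hom, an extra-pseudonatural $i\colon\mathbb 1\epn\cat D(F-,G-)$ is, by the definition, the data of: for each object $c$ of $\cat C$ a $\cat V$-pseudonatural transformation $\mathbb 1\Rightarrow\cat D(Fc,Gc)$ (equivalently, by the underlying-bicategory/enrichment adjunction, a 1-cell $\mathbb 1\to\cat D(Fc,Gc)$ with the appropriate structure), plus the two structural $2$-isomorphisms \eqref{epnbb'} and \eqref{epncc'}, subject to \eqref{EU} and the two halves of \eqref{EF} (the compatibility-with-parameters axiom \eqref{EC} being vacuous here since $\cat E=\cat I$).

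\emph{From $t$ to $i$.} First I set $i_c\coloneqq t_c\colon\mathbb 1\to\cat D(Fc,Gc)$ on objects. The $\cat V$-pseudonatural structure $t_{ab}$ (the crossing of Definition \ref{def Vpsnat}) is a $2$-cell relating $G$ whiskered by $t$ and $F$ whiskered by $t$; I will show that reinterpreting its two whiskerings through the definitions of $\cat D(F-,G-)$ on hom-objects \eqref{hom pseudofunctor} — which are built from $\eta$, $[1,m]$ and, on the contravariant side, an instance of $\beta$ — yields exactly $2$-cells of the shape demanded by \eqref{epnbb'} and \eqref{epncc'}. Concretely, $t_{ab}$ has domain $t^\ast\circ G_{ab}$ and codomain $t_\ast\circ F_{ab}$ in the notation of Section \ref{subsect vpsnat}; unwinding $(-)^\ast$ and $(-)_\ast$ as composites with $m$ and reading them inside $\cat V(\mathbb 1,-)$, these become the two legs appearing in \eqref{epnbb'}–\eqref{epncc'}. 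The unitality axiom \eqref{psnat unitality} for $t$ translates, after inserting the definition of the unit $u$ of $\cat D(F-,G-)$ (which is $\eta$ followed by $[1,m]$, composed suitably) and the unit crossing $u_t$, into \eqref{EU}; the functoriality axiom \eqref{psnat functoriality} for $t$, after inserting the definition of the multiplication $m$ of $\cat D(F-,G-)$ and the crossing $m_t$, becomes the two symmetric halves of \eqref{EF}.

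\emph{From $i$ to $t$ and inverseness.} Conversely, given $i$ I recover $t_c\coloneqq i_c$ and read the structural $2$-cells \eqref{epnbb'}, \eqref{epncc'} backwards through \eqref{hom pseudofunctor} to obtain the crossing $t_{ab}$; the axioms \eqref{EU}, \eqref{EF} then give back \eqref{psnat unitality}, \eqref{psnat functoriality}. That the two translations are mutually inverse is immediate on the object-level data and, on the $2$-cells, amounts to the remark that the manipulations used (sliding past $\eta$, $\varepsilon$ by naturality, using the modification property of the triangulators, and Remark \ref{join identities}) are all reversible. I would also note the compatibility of morphisms: a morphism of $\cat V$-pseudonatural transformations (a $\cat V$-modification $M$) corresponds under this dictionary to a morphism of extra-pseudonatural transformations in the sense of Definition \ref{def extrapsnat category}, so the correspondence is in fact an isomorphism of categories; but strictly for the Proposition as stated only the data-level bijection is required.

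\emph{Main obstacle.} The substantive work is in the second paragraph: verifying that the definition of $\cat D(F-,G-)$ on hom-objects \eqref{hom pseudofunctor}, together with its structural $2$-cells \eqref{fun hom} (whose construction occupies the theorem of Section \ref{subsect hom pseudo}), turns the $\cat V$-pseudonaturality axioms \eqref{psnat unitality}–\eqref{psnat functoriality} precisely into \eqref{EU}–\eqref{EF}. This is a string-diagram bookkeeping exercise of the same flavour as the proofs of Theorems \ref{thmCop}, \ref{thmBtensorC} and \ref{closed monoidal is self enriched}: one expands all the $\eta$'s, $\varepsilon$'s, $\beta$'s and the associator/unitors of $\cat D$ hidden inside the hom-pseudofunctor, then repeatedly applies naturality of $\eta,\varepsilon,\beta$, the swallowtail equations \eqref{swallowtail1}–\eqref{swallowtail2}, the modification properties, and Remark \ref{join identities}, until the two pictures coincide. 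I would carry out the $\cat B$-side (axiom involving \eqref{epnbb'}) in full and note that the $\cat C$-side is identical up to the extra $\beta$ already recorded in the contravariant hom-pseudofunctor formula. Given the length of such computations, I expect to present the bijection explicitly and the axiom-matching as a guided calculation, leaving the most routine diagram chases to the reader as was done in Theorem \ref{closed monoidal is self enriched}.
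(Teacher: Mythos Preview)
Your proposal is correct and follows essentially the same plan as the paper: set $i_c\coloneqq t_c$, build the structural 2-cell from $t_{ab}$, and verify \eqref{EU} (in full) and \eqref{EF} (sketched) by sliding $\eta$ around and invoking \eqref{psnat unitality}--\eqref{psnat functoriality}.

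One concrete point where the paper is more explicit than your outline: to build the structural 2-cell $i_{ab}$, the paper does not simply ``unwind $(-)^\ast$ and $(-)_\ast$'' but invokes Remark~\ref{parametric extra} applied to the tensor--hom parametric pseudoadjunction. That remark supplies ready-made invertible 2-cells relating $[i,1]\circ\eta$ to $[1,1i]\circ\eta$ (the extra-pseudonaturality of the unit $\eta$ in the parameter), and the paper's $i_{ab}$ is then the composite of $[1,t_{ab}]$ sandwiched between two such 2-cells. This is the bridge between the form $t^\ast,t_\ast$ (where $i$ appears outside the bracket) and the legs of \eqref{epncc'} (where $i$ must appear inside, since those legs are $i^\ast$ composed with the hom-pseudofunctor \eqref{hom pseudofunctor}). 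Your plan would rediscover these 2-cells in the course of the string-diagram bookkeeping, but naming Remark~\ref{parametric extra} up front makes the construction of $i_{ab}$ much cleaner and the subsequent verification of \eqref{EU} shorter. Your discussion of the converse direction and the modification-level correspondence goes beyond what the paper records.
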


\begin{proof}
Let's construct the structure of an extra-pseudonatural transformation $i\colon \mathbb 1\epn \cat D(F-,G-)$ from the one of pseudonatural transformation $t\colon F\Rightarrow G$. First, the 1-dimensional structure is the very same, that is a 1-morphism $i_a=t_a\colon\mathbb 1\to\cat D(Fa,Ga)$ in $\cat V$. Then, observe that Remark \ref{parametric extra} gives, in the case of the tensor-hom parametric pseudoadjunction, an invertible 2-cell in $\cat V$
\begin{center}

\end{center}
This concludes the proof for \eqref{EU}. The proof of axiom \eqref{EF} is not conceptually different and is left as an exercise to the reader. It basically consists of expanding the left-hand side via the functoriality axiom \eqref{psnat functoriality} and of playing with the naturality of $\eta$. It is convenient to also observe how the right-hand side of \eqref{EF} can be simplified by mean of the definition of $\nu$, in which the 2-cell $\rho^{-1}\lambda^{-1}$ cancels out with $\lambda\rho$.
\end{proof}

\section{Enriched bi(co)ends}\label{section bicoends}

In this section we introduce the notion of end and coend in the enriched bicategorical context. As customary in the enriched context, we will be able to define a good notion by first defining it when valued in the ground braided monoidal bicategory $\cat V$, and then by representably extending to the general case. Our first object of interest are then $\cat V$-pseudofunctors of type $\cat B\op\otimes\cat B\longrightarrow\cat V$, therefore assuming $\cat V$ to be a right closed braided monoidal bicategory.

\begin{defn}\label{def Vbicoend}
Let $\cat V$ be a right closed braided monoidal bicategory, and let $P\colon\cat{B}\op\otimes\cat{B}\to\cat{V}$ be a $\cat V$-pseudofunctor. An \emph{(enriched) bicoend} of $P$ is, if it exists, an object \[\int^bP(b,b)\]
of $\cat{V}$ together with an (enriched) extra-pseudonatural transformation $i\colon P\epn\displaystyle\int^bP(b,b)$ (to the constant pseudofunctor, see Remark \ref{Iop times I}) such that the following two axioms hold true.
\begin{enumerate}
    \item[(BC1)] For any object $x$ in $\cat{V}$ and any extra-pseudonatural transformation $j\colon P\epn x$ into the constant $\cat V$-pseudofunctor over $x$, there's a 1-cell $\tilde{j}\colon\int^bP(b,b)\to x$, and for every $a$ in $\cat{B}$ there is a 2-isomorphism $J_a$
    \begin{center}
\begin{tikzcd}
                                              & {\int^bP(b,b)} \arrow[rd, "\tilde{j}"] \arrow[d, phantom, "\Arrowd J_a" description] &   \\
{P(a,a)} \arrow[ru, "i_a"] \arrow[rr, "j_a"'] & {}                                                                               & x
\end{tikzcd}
    \end{center}
satisfying the following identity of 2-cells:
\begin{center}
\begin{tikzcd}[column sep=0em]
{\mathcal B(a,c)} \arrow[r, "{P(-,a)}"] \arrow[d, "{P(c,-)}"'] & {[P(c,a),P(a,a)]} \arrow[rd, "{i_a}_*"] \arrow[dd, "{j_a}_*"'] &                                                                                                                    \\
{[P(c,a),P(c,c)]} \arrow[rd, "{j_c}_*"']             & {} \arrow[lu, phantom, "\Arrowdl j_{ac}" description]                             & {[P(c,a),\int^bP(b,b)]} \arrow[ld, "\tilde{j}_*"] \arrow[l, phantom, "\adorn{\Leftarrow}{{J_a}_*}" description] \\
                                                               & {[P(c,a),x]}                                                   &                                                                                                                   
\end{tikzcd}
\end{center}  
\begin{center}
=
\end{center}  
\begin{center}
\begin{tikzcd}[column sep=0em]
{\mathcal B(a,c)} \arrow[r, "{P(-,a)}"] \arrow[d, "{P(c,-)}"']           & {[P(c,a),P(a,a)]} \arrow[rd, "{i_a}_*"]                                      &                                                             \\
{[P(c,a),P(c,c)]} \arrow[rd, "{j_c}_*"'] \arrow[rr, "{i_c}_*"] & {} \arrow[lu, phantom, "\Arrowdl i_{ac}" description] \arrow[d, phantom, "\Arrowdl {J_c}_*" description] & {[P(c,a),\int^bP(b,b)]} \arrow[ld, "\tilde{j}_*"] \\
                                                                         & {[P(c,a),x]}                                                                 &                                                            
\end{tikzcd}
\end{center}
\item[(BC2)]\label{BC2} For every pair of 1-cells $h,k\colon\int^bP(b,b)\to x$ and any family of 2-cells $\Gamma_a\colon hi_a\Rightarrow ki_a$ such that
    
\begin{center}
\begin{tikzcd}[column sep=0em]
{ \mathcal B(a,c)} \arrow[r, "{P(-,a)}"] \arrow[d, "{P(c,-)}"'] & {[P(c,a),P(a,a)]} \arrow[rd, "{i_a}_*"] \arrow[dd, "{i_a}_*"] \arrow[ld, phantom, "{\Arrowdl i_{ac}}" description, bend left=10, pos=0.6] &                                                                                                       \\
{[P(c,a),P(c,c)]} \arrow[rd, "{i_c}_*"']              &                                                                                                                      & {[P(c,a),\int^bP(b,b)]} \arrow[d, "h_\ast"] \arrow[ld, phantom, "\Arrowdl {\Gamma_a}_*" description, bend right=10, pos=0.4] \\
                                                                & {[P(c,a),\int^bP(b,b)]} \arrow[r, "k_*"']                                                                  & {[P(c,a),x]}                                                                               
\end{tikzcd}  
\end{center}
  
\begin{center}
=
\end{center}
\begin{center}
\begin{tikzcd}[column sep=0em]
{ \mathcal B(a,c)} \arrow[r, "{P(-,a)}"] \arrow[d, "{P(c,-)}"']          & {[P(c,a),P(a,a)]} \arrow[rd, "{i_a}_*"] \arrow[ld, phantom, "{\Arrowdl i_{ac}}" description, bend left=10] &                                                                                                                  \\
{[P(c,a),P(c,c)]} \arrow[rd, "{i_c}_*"'] \arrow[rr, "{i_c}_*"] &                                                                                                           & {[P(c,a),\int^bP(b,b)]} \arrow[d, "h_\ast"] \arrow[ld, phantom, "\Arrowdl{\Gamma_c}_*" description, bend right=10] \\
                                                                         & {[P(c,a),\int^bP(b,b)]} \arrow[r, "k_*"']                                                       & {[P(c,a),x]}                                                                                          
\end{tikzcd}
\end{center}
there is a unique 2-cell $\gamma\colon h\Rightarrow k$ such that $\Gamma_a=\gamma\ast i_a$, namely whiskering of the 2-cell $\gamma$ with the 1-cell $i_a$.
\end{enumerate}
\end{defn}

\begin{rmk}
One can see that an extra-pseudonatural transformation $i\colon P\epn Q$ between $\cat V$-pseudofunctors
\begin{align*}
P\colon\cat E\otimes\cat B\op\otimes\cat B\longrightarrow\cat D\\
Q\colon\cat E\otimes\cat C\op\otimes\cat C\longrightarrow\cat D
\end{align*}
consists of the same data of an extra-pseudonatural transformation $P\op\epn Q\op$ between the $\cat V$-pseudofunctors
\begin{align*}
P\op\colon\cat E\op\otimes(\cat B\op)\op\otimes\cat B\op\longrightarrow\cat D\op\\
Q\op\colon\cat E\op\otimes(\cat C\op)\op\otimes\cat C\op\longrightarrow\cat D\op.
\end{align*}
This is a straightforward observation based on Remarks \ref{oponfunctors} and \ref{opdistributeovertensor}.
\end{rmk}

\begin{defn}\label{def Vbiend}
Let $P\colon \cat B\op\otimes\cat B\to\cat V$ be a $\cat V$-pseudofunctor. An object $\int_b P(b,b)$ in $\cat V$ together with an enriched extra-pseudonatural transformation $\int_bP(b,b)\epn P$ defines a \emph{biend} if the equivalent data
$$P\op\epn \int_b P(b,b)$$
defines a bicoend.
\end{defn}

At this stage, the two bicoend axioms, which may look kind of obscure at first, deserve an explanation that will make them much clearer. The key point lies in the following proposition, which provides a higher point of view on bi(co)ends by defining them as representing objects.
\begin{prop}\label{(co)end representing}
Let $P\colon \cat B\op\otimes\cat B\to\cat V$ be a pseudofunctor admitting a bicoend. Then, there is for every object $x$ in $\cat V$ an equivalence, pseudonatural in $x$, between the categories
\begin{equation}\label{(co)end representing 1}
\cat V(\int^bP(b,b),x)\simeq\cat V\text{-}\psnat^\mathsf{e}(P,x).
\end{equation}Dually, if $P$ admits a biend, there is for every $x$ an equivalence
\begin{equation}\label{(co)end representing 2}
\cat V(x,\int_bP(b,b))\simeq\cat V\text{-}\psnat^\mathsf{e}(x,P).
\end{equation}
Conversely, such representing objects define bi(co)ends.
\end{prop}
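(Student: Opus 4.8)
The plan is to prove the equivalence \eqref{(co)end representing 1} in both directions, and then dualize to get \eqref{(co)end representing 2}, the dual being formal via Definitions \ref{def Vbiend} and \ref{def Vbicoend} and Remark \ref{opdistributeovertensor}. First I would construct the functor $\Phi_x\colon\cat V(\int^bP(b,b),x)\to\cat V\text{-}\psnat^\mathsf{e}(P,x)$. On objects it sends a $1$-cell $f\colon\int^bP(b,b)\to x$ to the extra-pseudonatural transformation $f\circ i$, meaning the one with components $(f\circ i)_a = f_\ast\circ i_a$ and structure $2$-cells obtained by whiskering the structure $2$-cells \eqref{epnbb'}, \eqref{epncc'} of $i$ with $f$; the axioms \eqref{EU}, \eqref{EF}, \eqref{EC} for $f\circ i$ follow by pasting $f$ onto those for $i$ (using that the $\cat V$-pseudofunctor $x$ is constant, so its coherence cells are trivial and whiskering with $f_\ast$ is compatible with them). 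On morphisms $\gamma\colon f\Rightarrow g$ in $\cat V(\int^bP(b,b),x)$, $\Phi_x$ sends $\gamma$ to the family $\{\gamma\ast i_a\}_a$, which is a morphism of extra-pseudonatural transformations precisely because the pasting conditions in Definition \ref{def extrapsnat category} reduce, after whiskering the $J$-type coherence of $i$, to the naturality of $\gamma$.

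Next I would check that $\Phi_x$ is essentially surjective and fully faithful, and this is exactly where the two bicoend axioms earn their keep. Essential surjectivity: given $j\colon P\epn x$, axiom (BC1) furnishes a $1$-cell $\tilde j$ and $2$-isomorphisms $J_a\colon \tilde j_\ast\circ i_a\Rightarrow j_a$; the displayed identity of $2$-cells in (BC1) says exactly that the $J_a$ assemble into a (hence invertible) morphism of extra-pseudonatural transformations $\Phi_x(\tilde j)=\tilde j\circ i\;\xrightarrow{\ \sim\ }\;j$, so $j$ is in the essential image. Fullness: given a morphism $\Gamma\colon \Phi_x(h)\to\Phi_x(k)$, its components are $2$-cells $\Gamma_a\colon h_\ast i_a\Rightarrow k_\ast i_a$ satisfying a compatibility that, once unwound, is precisely the hypothesis of axiom (BC2); so (BC2) produces a unique $\gamma\colon h\Rightarrow k$ with $\gamma\ast i_a=\Gamma_a$, i.e. $\Phi_x(\gamma)=\Gamma$. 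Faithfulness: two $2$-cells $\gamma,\gamma'\colon h\Rightarrow k$ with $\Phi_x(\gamma)=\Phi_x(\gamma')$ have $\gamma\ast i_a=\gamma'\ast i_a$ for all $a$, hence $\gamma=\gamma'$ by the uniqueness clause of (BC2) applied to $\Gamma_a=\gamma\ast i_a$. Thus $\Phi_x$ is an equivalence of categories.

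For pseudonaturality in $x$ I would observe that for a $1$-cell $u\colon x\to x'$ in $\cat V$ the square relating $\Phi_x$ and $\Phi_{x'}$ commutes up to a canonical invertible $2$-cell coming from the pseudofunctoriality of post-composition $u_\ast\circ(-)$ and the pseudofunctor structure of $P$; the relevant coherence is just the modification axiom for whiskering together with the coherence of the self-enriched $\cat V$ from Theorem \ref{closed monoidal is self enriched}, so this step is routine. For the converse direction I would argue that any object $E$ equipped with a pseudonatural equivalence $\cat V(E,x)\simeq\cat V\text{-}\psnat^\mathsf{e}(P,x)$ carries, via Yoneda-type reasoning in $\cat V$, an extra-pseudonatural transformation $i\colon P\epn E$ (the image of $\id_E$), and that axioms (BC1)--(BC2) are then forced: (BC1) is the statement that every $j\colon P\epn x$ is, up to the coherent isomorphism witnessing the equivalence, of the form $\tilde j\circ i$, and (BC2) is fullness and faithfulness of the comparison applied to $2$-cells. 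The main obstacle I expect is bookkeeping: verifying that the structure $2$-cells of $\Phi_x(f)$ really satisfy \eqref{EU}, \eqref{EF} and in particular the parameter-compatibility \eqref{EC}, and that the identity of $2$-cells in (BC1)/(BC2) matches, on the nose, the morphism conditions of Definition \ref{def extrapsnat category} — this is a pasting-diagram chase through the whiskering operations $(-)_\ast,(-)^\ast$ and the crossings of Section \ref{subsect vpsnat}, conceptually straightforward but notation-heavy, exactly the kind of verification the paper elsewhere delegates to the reader.
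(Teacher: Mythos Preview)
Your proposal is correct and follows essentially the same approach as the paper: define the comparison functor by precomposition with the universal extra-pseudonatural transformation $i$, then identify (BC1) with essential surjectivity and (BC2) with full faithfulness. The paper's proof is terser---it does not spell out pseudonaturality in $x$ or the converse direction as you do, and it gives a slightly more explicit pasting argument for why $\gamma\ast i$ is a morphism of extra-pseudonatural transformations (both sides reduce to the horizontal composite $\gamma_\ast i_{bb'}$)---but the strategy is identical.
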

\begin{proof}
Let us prove the first statement, and call $i\colon P\epn\int^bP_{bb}$ the bicoend of $P$. The dual statement will clearly follow a symmetric argument. The equivalence is here explicitly given as the precomposition
\begin{align}\label{bicoend represent equiv}
\cat V(\int^bP(b,b),x)&\longrightarrow\cat V\text{-}\psnat^\mathsf{e}(P,x)
\end{align}
defined on an object $k\colon\int^bP(b,b)\to x$ to be the extra-pseudonatural transformation $k\circ i$ with components $(k\circ i)_b=k\circ i_b$. On morphisms $\gamma\colon k\Rightarrow h$, the obvious whiskering $\gamma\ast i$, with components given by $\gamma\ast i_b\colon ki_b\Rightarrow hi_b$. One needs to check that $\gamma\ast i$ actually defines a morphism of extra-pseudonatural transformations, which is the equality
\begin{center}
\begin{tikzcd}[row sep=5em, column sep=3.5em]
	{\mathcal B(b,b')} & {[Pb'b,Pbb]} \\
	{[Pb'b,Pb'b']} & {[Pb'b,x]}
	\arrow["{P(-,b)}", from=1-1, to=1-2]
	\arrow["{P(b',-)}"', from=1-1, to=2-1]
	\arrow["{{k i_b}_*}", from=1-2, to=2-2]
	\arrow[phantom, "{\Arrowdl (k i)_{bb'}}"{description}, draw=none, from=2-1, to=1-2]
	\arrow[""{name=0, anchor=center, inner sep=0}, "{{k i_{b'}}_*}", from=2-1, to=2-2]
	\arrow[""{name=1, anchor=center, inner sep=0}, "{{h i_{b'}}_*}"', curve={height=40pt}, from=2-1, to=2-2]
	\arrow[phantom, "{\Arrowd{\gamma i_{b'}}_*}"{description}, draw=none, from=0, to=1]
\end{tikzcd}
=
\begin{tikzcd}[row sep=5em, column sep=3.5em]
	{\mathcal B(b,b')} & {[Pb'b,Pbb]} \\
	{[Pb'b,Pb'b']} & {[Pb'b,x]}
	\arrow["{P(-,b)}", from=1-1, to=1-2]
	\arrow["{P(b',-)}"', from=1-1, to=2-1]
	\arrow[""{name=0, anchor=center, inner sep=0}, "{{k i_b}_*}", curve={height=-40pt}, from=1-2, to=2-2]
	\arrow[""{name=1, anchor=center, inner sep=0}, "{{hi_b}_*}"', from=1-2, to=2-2]
	\arrow[phantom, "{\Arrowdl (h i)_{bb'}}"{description}, draw=none, from=2-1, to=1-2]
	\arrow["{{h i_{b'}}_*}"', from=2-1, to=2-2]
	\arrow[phantom, "{\adorn{\Leftarrow}{{\gamma i_b}_\ast}}"{description}, draw=none, from=0, to=1]
\end{tikzcd}
\end{center}
This is true, since expanding the composition $(ki_b)_\ast$ and $(hi_{b'})\ast$ we find both squares to be equal to the horizontal composition $\gamma_*i_{bb'}$.
\begin{center}
\begin{tikzcd}[column sep = 2.5em]
	& {[Pb'b,Pbb]} \\
	{\mathcal B(b,b')} && {[Pb'b,\int^cPcc]} & {[Pb'b,x]} \\
	& {[Pb'b,Pb'b']}
	\arrow["{{i_b}_*}", from=1-2, to=2-3]
	\arrow[phantom, "{\Arrowd i_{bb'}}"{description}, draw=none, from=1-2, to=3-2]
	\arrow["{P(-,b)}", from=2-1, to=1-2]
	\arrow["{P(b',-)}"', from=2-1, to=3-2]
	\arrow[""{name=0, anchor=center, inner sep=0}, "{h_*}"', curve={height=18pt}, from=2-3, to=2-4]
	\arrow[""{name=1, anchor=center, inner sep=0}, "{k_*}", curve={height=-18pt}, from=2-3, to=2-4]
	\arrow["{{i_{b'}}_*}"', from=3-2, to=2-3]
	\arrow[phantom, "{\Arrowd\gamma_*}"{description}, draw=none, from=1, to=0]
\end{tikzcd}
\end{center}
Fully faithfulness of \eqref{bicoend represent equiv} is now precisely axiom (BC2), which states that for all families $\Gamma_b\colon ki_b\Rightarrow hi_b$ defining a morphism of extra-pseudonatural transformations there is a unique morphism $h\Rightarrow k$ in the domain category which is mapped to it by precomposition with $i$. On the other hand, axiom (BC1) says that for all objects $j\colon P\epn x$ in the codomain category, there exists a $\tilde{j}\colon \int^bPbb\to x$ in the domain and an isomorphism of extra-pseudonatural transformations $J\colon \tilde{j}\circ i\cong j$, which is precisely essential surjectivity.
\end{proof}

\subsection{Arbitrary-valued bi(co)ends}
Now, the general definition of a bi(co)end valued in any $\cat V$-bicategory $\cat D$ can be given representably. As before, we suppress the parametric variable in $\cat E$ for simplicity.
\begin{defn}
Let $P\colon\cat B\op\otimes\cat B\to\cat D$ be a $\cat V$-pseudofunctor, where $\cat D$ is any $\cat V$-bicategory. A \emph{biend} for $P$ is an object $\int_bP(b,b)$ in $\cat D$ together with an extra-pseudonatural transformation $i\colon\int_bP(b,b)\epn P$ such that for every object $d$ in $\cat D$ the extra-pseudonatural transformation
$$\cat D(d,i)\colon\cat D(d,\int_bP(b,b))\epn\cat D(d,P(-,-))$$
is a biend for $\cat D(d,P(-,-))\colon\cat{B}\op\otimes\cat{B}\to\cat V$, in the sense of Definition \ref{def Vbiend}.

A \emph{bicoend} for $P$ is an object $\int^bP(b,b)$ in $\cat D$ together with an extra-pseudonatural transformation $i\colon P\epn\int^bP(b,b)$ such that for every object $d$ in $\cat D$ the extra-pseudonatural transformation $$\cat D(i,d)\colon\cat D(\int^bP(b,b),d)\epn\cat D(P(-,-),d)$$
is a biend for $\cat D(P(-,-),d)\colon\cat{B}\op\otimes\cat{B}\to\cat V.$
\end{defn}
It immediately follows that we have a pair of equivalences in $\cat V$
\begin{equation}\label{(co)continuity}
\cat D(d,\int_bP(b,b))\simeq\int_b\cat D(d,P(b,b))
\end{equation}
\begin{equation}\label{(co)continuity 2}
\cat D(\int^bP(b,b),d)\simeq\int_b\cat D(P(b,b),d).
\end{equation}
pseudonatural in $d$.


\begin{rmk}\label{extrapsnat object}
A natural next step would be to have the analogue result of Proposition \ref{(co)end representing} in the case of $\cat D$-valued bi(co)ends for an arbitrary $\cat V$-bicategory $\cat D$. In order to do so, we need to upgrade the category of bicowedges $\cat V$-$\psnat^\mathsf{e}(P,d)$ to an object in $\cat V$. In other words, we want an object $\underline{\cat V\text{-}\psfun^\mathsf{e}}(P,d)$ of $\cat V$ giving the desired equivalence
$$\cat D(\int^bP(b,b),d)\simeq \underline{\cat V\text{-}\psnat^\mathsf{e}}(P,d).$$
for every object (and constant pseudofunctor) $d$. The obvious choice is hence by means of \eqref{(co)continuity 2}, to define 
\[\underline{\cat V\text{-}\psfun^\mathsf{e}}(P,d)\coloneqq \int_b\mathcal{D}(P(b,b),d).\]
\end{rmk}

\subsection{The enriched pseudofunctor bicategory}\label{section enriched pseudofunctor bicat}

In the non-enriched setting it is customary and straightforward to prove that if $L,S\colon\cat C\to \cat D$ are pseudofunctors, then the biend of the pseudofunctor $\cat D(L-,S-)\colon \cat C\op\times\cat C\to\mathsf{Cat}$ exists and is given up to a canonical equivalence by $\psnat(L,S)$. By virtue of this result, we can enrich the \emph{bicategory of enriched pseudofunctors}. The following construction will provide the structure inducing unit and composition for hom-objects.

\begin{construction}\label{unit comp epn}
We subsequently explain how to construct the data for the $\cat V$-pseudofunctor enriched bicategory for two $\cat V$-bicategories $\cat C,\cat D$. Whenever $F,G,H\colon\cat C\to \cat D$ are $\cat V$-pseudofunctors, and if the relevant following biends exist, we look for extra-pseudonatural transformations $$\underline u_F\colon\mathbb{1}\epn\mathcal{D}(F-,F-)$$
and
$$\underline m_{F,G,H}\colon \int_c\mathcal D(Gc,Hc)\otimes\int_c\mathcal D(Fc,Gc)\epn\mathcal{D}(F-,H-).$$
For what concerns $u_F$, one can define it via Proposition \ref{correspondence psnat epn} to be the same data of the $\cat V$-pseudonatural $\id\colon F\Rightarrow F$. Hence, one has each 1-component at $d$ given as \[{(\underline u_F)}_d\coloneqq u_{(Fd)}\colon\mathbb{1}\to\cat D(Fd,Fd).\]
Each $d$-component of $\underline{m}$ is the composition
\[\underline m_d\colon\int_c\mathcal D(Gc,Hc)\otimes\int_c\mathcal D(Fc,Gc)\overset{k_d\otimes j_d}\longrightarrow\mathcal D(Gd,Hd)\otimes\mathcal D(Fd,Gd)\overset{m}\longrightarrow\cat D(Fd,Hd)\]
and 2-isomorphisms are built using the structure of extra-pseudonaturality of $k$ and $j$. The 2-cell $\underline m_{d,d'}$
\begin{center}
\begin{tikzcd}
	{\cat C(d,d')} & {[\cat D(Fd',Hd'),\cat D(Fd,Hd')]} \\
	{[\cat D(Fd,Hd),\cat D(Fd,Hd')]} & {[\int_c\cat D(Gc,Hc)\otimes\int_c\cat D(Fc,Gc),\cat D(Fd,Hd')]}
	\arrow["{\cat D(F-,Hd')}", from=1-1, to=1-2]
	\arrow["{\cat D(Fd,H-)}"', from=1-1, to=2-1]
	\arrow["{\Arrowdl \underline m_{dd'}}"{description}, draw=none, from=1-2, to=2-1]
	\arrow["{\underline m^*}", from=1-2, to=2-2]
	\arrow["{\underline m^*}"', from=2-1, to=2-2]
\end{tikzcd}
\end{center}
is given by the diagram
\begin{center}
\begin{tikzpicture}[scale=.5]
		\node [style=none] (0) at (-11, 3) {};
		\node [style=none] (1) at (-11, 1) {};
		\node [style=none] (2) at (-11, -1) {};
		\node [style=none] (3) at (-12, 3) {$(k_{d'}j_{d'})^*$};
		\node [style=none] (4) at (-12, 1) {$m^\ast$};
		\node [style=none] (5) at (-12.75, -1) {$\mathcal D(F-,Hd')$};
		\node [style=none] (6) at (12, 3) {$(k_dj_d)^\ast$};
		\node [style=none] (7) at (12, 1) {$m^\ast$};
		\node [style=none] (8) at (12.5, -1) {$\mathcal D(Fd,G-)$};
		\node [style=none] (9) at (11, 3) {};
		\node [style=none] (10) at (11, 1) {};
		\node [style=none] (11) at (11, -1) {};
		\node [style=none] (12) at (-12, 4) {};
		\node [style=none] (13) at (-13.5, 3) {};
		\node [style=none] (14) at (-12, 0) {};
		\node [style=none] (15) at (-13.5, 1) {};
		\node [style=none] (16) at (-9.5, 3) {};
		\node [style=none] (17) at (9.5, 3) {};
		\node [style=fixed size node] (18) at (-4, -0.5) {$1j_{dd'}$};
		\node [style=fixed size node] (19) at (4, -0.5) {$k_{dd'}1$};
		\node [style=squared] (20) at (-9, 0) {};
		\node [style=squared] (21) at (0, 0) {};
		\node [style=squared] (22) at (9, 0) {};
		\node [style=none] (23) at (-4, 2) {};
		\node [style=none] (24) at (4, 2) {};
		\node [style=none] (25) at (5, 4) {};
		\node [style=none] (26) at (-5, 4) {};
		\node [style=none] (27) at (-14.25, 2) {${\underline m_{d'}}^\ast$};
		\node [style=none] (28) at (-8.25, 2.5) {$1j_{d'}^\ast$};
		\node [style=none] (29) at (-5, 4.5) {$k_{d'}1^\ast$};
		\node [style=none] (30) at (-7, -2.5) {$1\mathcal D(F-,Gd')$};
		\node [style=none] (31) at (-2.25, -2.5) {$1\mathcal D(Fd,G-)$};
		\node [style=none] (32) at (2.25, -2.5) {$\mathcal D(G-,Hd')1$};
		\node [style=none] (33) at (7, -2.5) {$\mathcal D(Gd,H-)1$};
		\node [style=none] (34) at (-5.5, 0.25) {$1j_{d'}^\ast$};
		\node [style=none] (35) at (-2.5, 0.25) {$1j_{d}^\ast$};
		\node [style=none] (36) at (-8.5, 1) {$m_\ast$};
		\node [style=none] (37) at (-4.5, 2.5) {$m_\ast$};
		\node [style=none] (38) at (-0.75, 1.25) {$m_\ast$};
		\node [style=none] (39) at (0.75, 1.25) {$m_\ast$};
		\node [style=none] (40) at (4.5, 2.5) {$m_\ast$};
		\node [style=none] (41) at (8.5, 1) {$m_\ast$};
		\node [style=none] (42) at (12, 4) {};
		\node [style=none] (43) at (13.5, 3) {};
		\node [style=none] (44) at (13.5, 1) {};
		\node [style=none] (45) at (12, 0) {};
		\node [style=none] (47) at (14.25, 2) {${\underline m_d}^\ast$};
		\node [style=none] (48) at (-2, 2.5) {$1j_{d}^\ast$};
		\node [style=none] (49) at (5, 4.5) {$1j_{d}^\ast$};
		\node [style=none] (50) at (2, 2.5) {$k_{d'}1^\ast$};
		\node [style=none] (51) at (2.5, 0.25) {$k_{d'}1^\ast$};
		\node [style=none] (52) at (5.5, 0.25) {$k_{d}1^\ast$};
		\node [style=none] (53) at (8.25, 2.5) {$k_{d}1^\ast$};
		\node [style=none] (54) at (-7.5, -1.75) {};
		\node [style=none] (55) at (-5.75, -1.75) {};
		\node [style=none] (56) at (7.5, -1.75) {};
		\node [style=none] (57) at (5.75, -1.75) {};
		\node [style=none] (58) at (-2.75, -1.75) {};
		\node [style=none] (59) at (-1.5, -1.75) {};
		\node [style=none] (60) at (1.5, -1.75) {};
		\node [style=none] (61) at (2.75, -1.75) {};

		\draw [in=270, out=180] (14.center) to (15.center);
		\draw (15.center) to (13.center);
		\draw [in=180, out=90] (13.center) to (12.center);
		\draw [in=240, out=0] (2.center) to (20);
		\draw [in=120, out=0] (1.center) to (20);
		\draw [in=180, out=45] (20) to (23.center);
		\draw [in=135, out=0] (23.center) to (21);
		\draw [in=45, out=-180] (24.center) to (21);
		\draw [in=135, out=0] (24.center) to (22);
		\draw [in=60, out=-120] (17.center) to (19);
		\draw [in=180, out=60, looseness=0.75] (18) to (25.center);
		\draw [in=120, out=0, looseness=0.75] (26.center) to (19);
		\draw [in=180, out=0] (0.center) to (16.center);
		\draw [in=180, out=30] (16.center) to (26.center);
		\draw [in=120, out=-60] (16.center) to (18);
		\draw [in=150, out=0] (25.center) to (17.center);
		\draw [in=180, out=0] (17.center) to (9.center);
		\draw [in=180, out=60] (22) to (10.center);
		\draw [in=180, out=-60] (22) to (11.center);
		\draw [in=-90, out=0] (45.center) to (44.center);
		\draw (44.center) to (43.center);
		\draw [in=0, out=90] (43.center) to (42.center);
		\draw [in=240, out=0] (55.center) to (18);
		\draw [in=285, out=-180, looseness=0.75] (54.center) to (20);
		\draw [in=180, out=0] (54.center) to (55.center);
		\draw [in=300, out=180] (57.center) to (19);
		\draw [in=180, out=0] (57.center) to (56.center);
		\draw [in=-105, out=0] (56.center) to (22);
		\draw [in=0, out=-105] (19) to (61.center);
		\draw [in=360, out=180] (61.center) to (60.center);
		\draw [in=300, out=-180] (60.center) to (21);
		\draw [in=0, out=-120] (21) to (59.center);
		\draw [in=360, out=180] (59.center) to (58.center);
		\draw [in=-60, out=180] (58.center) to (18);
\end{tikzpicture}
\end{center}
The squared 2-cells come from the pseudonaturality of $m$, and can be derived from \eqref{pseudonaturality of m} (the middle one) and \eqref{m crossing} (the side ones). The extra-pseudonatural transformation axioms then follow from the corresponding axioms for $j$ and $k$.
\end{construction}
With the previous construction we are able to enhance the structure of the bicategory $\cat V\text{-}\psfun(\cat C,\cat D)$ to a $\cat V$-bicategory, which we refer as $\llbracket\cat C,\cat D\rrbracket$, in order to distinguish from the plain underlying bicategory (see Remark \ref{underlying bicat}).
\begin{defn}[The $\cat V$-bicategory of $\cat V$-pseudofunctors]\label{A-PsNat}
Let $\cat V$ be a right closed braided monoidal bicategory and $\cat C,\cat D$ be two $\cat V$-bicategories, then $\llbracket\cat C,\cat D\rrbracket$ is - if the relevant biends exist - the $\cat V$-bicategory whose:
\begin{itemize}[leftmargin=*]
\item objects are the $\cat V$-pseudofunctors $F\colon\cat C\to\cat D,$
\item each hom-object $\llbracket\cat C,\cat D\rrbracket(F,G)$ is the object in $\cat V$ defined by the biend
\begin{equation*}
\int_c \cat D(Fc,Gc)
\end{equation*}
of the $\cat V$-pseudofunctor $\cat D(F-,G-)\colon\cat C\op\otimes\cat C\to\cat V$.
\item The unit is defined via the extra-pseudonatural transformation $\underline u_F\colon\mathbb{1}\epn\mathcal{D}(F-,F-)$ of Construction \ref{unit comp epn}, which provides a unique pair $(u_F,\{U_d\})$
\begin{equation}\label{unit psfun bicat}
\begin{tikzcd}
\mathbb 1 \arrow[rd, "u_{Fd}"'] \arrow[rr, "u_F"] & {} \arrow[d, phantom, "\Arrowur U_d" description] & {\int_c\mathcal D(Fc,Fc)} \arrow[ld, "i_d"] \\
                                                  & {\mathcal D(Fd,Fd)}                      &                                            
\end{tikzcd}
\end{equation}
\item The composition is defined analogously as induced by the other extra-pseudonatural transformation $\underline m$ of Construction \ref{unit comp epn}. That is,
\begin{equation}\label{composition psfun bicat}
\begin{tikzcd}
	{\int_c\mathcal D(Gc,Hc)\otimes\int_c\mathcal D(Fc,Gc)} && {\int_c\mathcal D(Fc,Hc)} \\
	& {\mathcal D(Fd,Hd)}
	\arrow[""{name=0, anchor=center, inner sep=0}, "{\tilde{\underline{m}}}", from=1-1, to=1-3]
	\arrow["{\underline m_d}"', from=1-1, to=2-2]
	\arrow["{i_d}", from=1-3, to=2-2]
	\arrow[phantom, "{\Arrowur M_d}"{description}, draw=none, from=2-2, to=0]
\end{tikzcd}
\end{equation}
\item The left unitor $\underline{\lambda}$ is defined via the equivalence \eqref{(co)end representing 2} by defining (each component of) a morphism of extra-pseudonatural transformations (biwedges) by composing the inverses of $U$ and $M$, and the unitor $\lambda$ for $\cat D$ as follows.
\begin{equation}
\begin{tikzcd}[column sep=0.5em]
	{\mathbb 1\otimes\int_c\mathcal D(Fc,Gc)} \\
	{\mathcal D(Fd,Fd)\otimes\int_c\mathcal D(Fc,Gc)} & {\mathbb 1\otimes\mathcal D(Fd,Gd)} & {\mathcal D(Fd,Gd)} && {\int_c\mathcal D(Fc,Gc)} \\
	& {\mathcal D(Fd,Fd)\otimes\mathcal D(Fd,Gd)} \\
	{\int_c\mathcal D(Fc,Fc)\otimes\int_c\mathcal D(Fc,Gc)}
	\arrow["{u_{Fd}1}", from=1-1, to=2-1]
	\arrow["{1j_d}", from=1-1, to=2-2]
	\arrow[""{name=0, anchor=center, inner sep=0}, equals, curve={height=-30pt}, from=1-1, to=2-5]
	\arrow[""{name=1, anchor=center, inner sep=0}, "{u_F1}"', bend right, out=-80, in=-100, from=1-1, to=4-1]
	\arrow[phantom, "\Arrowur\Sigma^{-1}"{description}, draw=none, from=2-1, to=2-2]
	\arrow["{1j_d}", from=2-1, to=3-2]
	\arrow[""{name=2, anchor=center, inner sep=0}, equals, from=2-2, to=2-3]
	\arrow["{u_{Fd}1}"', from=2-2, to=3-2]
	\arrow["{j_d}"', from=2-5, to=2-3]
	\arrow[""{name=3, anchor=center, inner sep=0}, "m"', from=3-2, to=2-3]
	\arrow["{i_d1}", from=4-1, to=2-1]
	\arrow[""{name=4, anchor=center, inner sep=0}, "{\tilde {\underline m}}"', bend right, from=4-1, to=2-5]
	\arrow[""{name=5, anchor=center, inner sep=0}, "{i_d\otimes j_d}", from=4-1, to=3-2]
	\arrow["="{description}, draw=none, from=0, to=2-2]
	\arrow["\cong"{description}, draw=none, from=2-1, to=5]
	\arrow["{\Arrowur U_d^{-1}1}"{description}, draw=none, bend left, from=2-1, to=1]
	\arrow["{\Arrowur\lambda}"{description}, draw=none, from=3-2, to=2]
	\arrow["\adorn{\Leftarrow}{M_d^{-1}}"{description}, draw=none, from=3, to=4, pos=.7]
\end{tikzcd}
\end{equation}
Similarly, the right unitor.
\item The associator $\underline \alpha$ is analogously built as corresponding to a morphism of extra-pseudonatural transformations (biwedges). The latter having components as follows, where the arrows without a name are just the tensor products of the structural biend morphisms.
\begin{equation}
\begin{tikzcd}
	{\int_c\mathcal D(Gc,Hc)\int_c\mathcal D(Fc,Gc)\int_c\mathcal D(Ec,Fc)} & {\int_c\mathcal D(Gc,Hc)\int_c\mathcal D(Ec,Gc)} \\
	{\mathcal D(Gd,Hd)\mathcal D(Fd,Gd)\mathcal D(Ed,Fd)} & {\mathcal D(Gd,Hd)\mathcal D(Ed,Gd)} \\
	{\mathcal D(Fd,Hd)\mathcal D(Ed,Fd)} & {\mathcal D(Ed,Hd)} \\
	{\int_c\mathcal D(Fc,Hc)\int_c\mathcal D(Ec,Fc)} & {\int_c\mathcal D(Ec,Hc)}
	\arrow[""{name=0, anchor=center, inner sep=0}, "{1\tilde{\underline m}}", from=1-1, to=1-2]
	\arrow[from=1-1, to=2-1]
	\arrow[""{name=1, anchor=center, inner sep=0}, "{\tilde{\underline m}1}"', bend right, from=1-1, to=4-1, out=-80, in=-100]
	\arrow[from=1-2, to=2-2]
	\arrow[""{name=2, anchor=center, inner sep=0}, "{\tilde{\underline m}}", bend left, from=1-2, to=4-2, out=80, in=100]
	\arrow[""{name=3, anchor=center, inner sep=0}, "1m", from=2-1, to=2-2]
	\arrow[""{name=4, anchor=center, inner sep=0}, "m1", from=2-1, to=3-1]
	\arrow[""{name=5, anchor=center, inner sep=0}, "m"', from=2-2, to=3-2]
	\arrow["{\Arrowur\alpha}"{description}, draw=none, from=3-1, to=2-2]
	\arrow[""{name=6, anchor=center, inner sep=0}, "m", from=3-1, to=3-2]
	\arrow[from=4-1, to=3-1]
	\arrow[""{name=7, anchor=center, inner sep=0}, "{\tilde{\underline m}}"', from=4-1, to=4-2]
	\arrow["{k_d}"', from=4-2, to=3-2]
	\arrow["{\Arrowdr M_d}"{description}, draw=none, from=2, to=5]
	\arrow["{\Arrowur 1M_d}"{description}, draw=none, from=3, to=0]
	\arrow["{\Arrowur M_d^{-1}1}"{description}, draw=none, from=4, to=1]
	\arrow["{\Arrowul M_d^{-1}}"{description}, draw=none, from=7, to=6]
\end{tikzcd}
\end{equation}
\end{itemize} 
The coherence identities \eqref{IC Vbicat} and \eqref{AC Vbicat} can then easily be proved from these definitions, and then be transposed again via the equivalence of categories \eqref{(co)end representing 2}.
\end{defn}

\section{Future perspectives}

The work presented is part of my Ph.D. thesis \cite{mythesis}. In there, this bicategorical enriched machinery has been applied to the context of Mackey pseudofunctors. Mackey pseudofunctors, whose theory has been introduced in \cite{Balmer_2020}, arise naturally in the representation theory of finite groups, and in that work from Balmer--Dell'Ambrogio it is shown that they can be seen as $\cat V$-pseudofunctor over a universal $\cat V$-bicategory for the braided monoidal bicategory $\cat V=\mathsf{Add}$ of additive categories, additive functors and natural transformations.

In particular, all of the constructions presented in this paper and leading to the theory of bi(co)ends has been necessary in my thesis to define a Day convolution of Mackey pseudofunctor. This allows us to prove that the correct notion of a monoidal structure on such a Mackey pseudofunctor, which goes under the name of Green pseudofunctor, is precisely that of a pseudomonoid with respect to the Day convolution. A second paper about these applications will hopefully be ready soon.

\newpage
\printbibliography

\end{document}